\documentclass{amsart}

\usepackage[dvipsnames]{xcolor}
\usepackage[english]{babel}
\usepackage{amsfonts}
\usepackage{amsthm}
\usepackage{amsbsy}
\usepackage{amssymb}
\usepackage{graphicx}
\usepackage{eso-pic}
\usepackage{tikz}
\usepackage{xfrac}
\usepackage{float}
\usepackage[bookmarks=true,colorlinks=true,urlcolor=black,linkcolor=black,citecolor=black,pagebackref=false]
{hyperref}  
\usepackage{bookmark}
\usepackage{amsmath}
\usepackage{subfigure}
\usepackage{hyperref}
\usepackage{scalerel,stackengine}
\usepackage{epigraph}
\usepackage{placeins}
\usetikzlibrary{matrix,arrows}
\usepackage[margin=3cm]{geometry}
\usepackage{enumitem}

\usepackage{mathtools}
\usepackage{hhline}

\usepackage{amsmath,amssymb,tikz-cd}
\usepackage[inline]{asymptote}
\usepackage{comment}

\makeatletter
\newcommand{\mylabel}[2]{#2\def\@currentlabel{#2}\label{#1}}
\makeatother
\usepackage{todonotes}

\newtheorem{lemma}{Lemma}[section]
\newtheorem{thm}[lemma]{Theorem}
\newtheorem{prop}[lemma]{Proposition}
\newtheorem{cor}[lemma]{Corollary}
\newtheorem*{cor*}{Corollary}

\theoremstyle{definition}
\newtheorem{defn}[lemma]{Definition}
\newtheorem{notat}[lemma]{Notation}

\newtheorem{quest}[lemma]{Question}

\theoremstyle{remark}
\newtheorem{rem}[lemma]{Remark}

\newcommand\restr[2]{{
  \left.\kern-\nulldelimiterspace 
  #1 
  \littletaller 
  \right|_{#2} 
  }}

\newcommand{\littletaller}{\mathchoice{\vphantom{\big|}}{}{}{}}

\newcommand{\lk}{L}

\newcommand{\matN}{\ensuremath {\mathbb{N}}}
\newcommand{\matR} {\ensuremath {\mathbb{R}}}

\newcommand{\matH} {\ensuremath {\mathbb{H}}}
\newcommand{\matS} {\ensuremath {\mathbb{S}}}

\newcommand{\Isom}{\ensuremath {\mathrm{Isom}}}

\author{Edoardo Rizzi}
\address{Scuola Normale Superiore \newline Piazza dei Cavalieri 7, 56126 Pisa, Italy}
\email{edoardo.rizzi@sns.it}

\thanks{}

\title{Some cusp-transitive hyperbolic 4-manifolds}

\begin{document}

\begin{abstract}
We realize 4 of the 6 closed orientable flat 3-manifolds as a cusp section of an orientable finite-volume hyperbolic 4
-manifold whose symmetry group acts transitively on the set of cusps.
\end{abstract}

\maketitle

\section{Introduction}

A (complete, finite-volume)
hyperbolic manifold is \emph{cusp transitive} if its isometry group acts transitively on the set of cusps. Very special cases are the \emph{$1$-cusped} manifolds (i.e.~manifolds with a single cusp), of which infinitely many examples are well known in dimension $2$ and $3$
, and one has been exhibited in dimension 4 for the first time in $2013$ \cite{KM}. 
In higher dimension $n > 4$ we do not know whether there exists a $1$-cusped hyperbolic $n$-manifold. In fact the existence of $1$-cusped manifolds is highly non-trivial, for example there is no $1$-cusped arithmetic orbifold of dimension $n\ge 30$ \cite{MR3070517}. Concerning cusp-transitive manifolds of dimension $n>4$, we are not aware of explicit examples in the literature. By mirroring some well-known right angled polytopes, it is easy to obtain some examples of dimension up to $8$ with toric cusps.

The \emph{type} of a cusp of a hyperbolic manifold is the diffeomorphism class of its section, which is a flat closed hypersurface. Each flat closed $n$-manifold is realized as a cusp type of some hyperbolic $(n+1)$-manifold \cite{MR2525031} (see also \cite{N,LR2,M}). The latter manifold has generally several other cusps, whose type does not appear controllable with the separability methods of \cite{N,LR2,M,MR2525031}. In the orientable setting, there are obstructions for a closed flat $(4n-1)$-manifold to be the cusp type of a $1$-cusped $4n$-manifold \cite{LR}. In this paper we are interested in which closed flat manifold can be realized as the cusp type of a cusp-transitive hyperbolic manifold.

For reasons that will become clear later, this article deals with $4$-dimensional hyperbolic manifolds. Recall that there are precisely $6$ closed orientable flat $3$-manifolds up to diffeomorphism: 
$E_1, \ldots, E_6$. Here $E_i$ is a mapping torus over $S^1 \times S^1$ with monodromy of order $1, 2, 3, 4, 6$ for $i = 1, \ldots, 5$, respectively, and $E_6$ is a rational homology sphere \cite[Section 12.3]{MA}. The manifold $E_1$ is the $3$-torus. We call $E_2$ (resp.~$E_3, E_4, E_5$) the \emph{$\frac{1}{2}$-twist} (resp.~\emph{$\frac{1}{3}$-twist}, \emph{$\frac{1}{4}$-twist}, \emph{$\frac{1}{6}$-twist}) \emph{manifold}, while $E_6$ is the so-called \emph{Hantzsche-Wendt manifold}.

There exist $1$-cusped orientable $4$-manifolds with cusp type $E_1$ \cite{KM} and $E_2$ \cite{KS}, while there is no $1$-cusped orientable $4$-manifold with cusp type $E_3$ or $E_5$ \cite{LR}. We refer to the discussion in  \cite[Sections 2.5 and 2.6]{MR3888615}, for this and related issues in dimension four. Moreover, there exists a cusp-transitive $4$-manifold with cusp type $E_6$ \cite{FKS}. We reprove this fact here. 
The novelty of this paper is the existence of a cusp-transitive hyperbolic $4$-manifold with cusp type the $\frac{1}{4}$-twist manifold $E_4$. Our construction actually realizes more cusp types. Specifically we prove:

%

\begin{thm} \label{thm:main}
For each $i = 1, 2, 4, 6$ there exists a cusp-transitive orientable hyperbolic $4$-manifold $M_i$ with cusps of type $E_i$.
\end{thm}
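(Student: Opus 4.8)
The plan is to build all four manifolds inside a single highly symmetric object: the ideal right-angled $24$-cell $P\subset\matH^4$. Its $24$ vertices are all ideal, and the vertex figure at each of them is a Euclidean cube, so every cusp cross-section is assembled from cubes. The orientation-preserving symmetries of the cube form $\Sym^+(\text{cube})\isom S_4$, and the flat $3$-manifolds arising as cube cross-sections are exactly those whose holonomy sits inside a rotation group of the square lattice: the trivial group, the order-$2$ and order-$4$ rotations about a face axis, and the Klein four-group $V_4$. These produce precisely $E_1,E_2,E_4$ (mapping tori of the square torus with monodromy of order $1,2,4$) and the Hantzsche--Wendt manifold $E_6$; the hexagonal monodromies of order $3$ and $6$ cannot occur, which is exactly why the indices $3,5$ are absent. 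This observation dictates the whole strategy and explains the statement of the theorem.

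First I would fix $G\le\Isom^+(\matH^4)$, the orientation-preserving symmetry group of the regular ideal honeycomb tiled by copies of $P$; this group contains the reflection group of $P$ together with the rotational symmetries of $P$, so in particular it contains the order-$4$ rotations of each cube vertex figure, and it acts transitively on the ideal vertices. The quotient $\matH^4/G$ is an orientable orbifold with a single cusp, whose cross-section is a small Euclidean $3$-orbifold $Q=\matR^3/\Lambda$ for a crystallographic group $\Lambda$. For each $i\in\{1,2,4,6\}$ I would then exhibit a finite-index normal torsion-free subgroup $\Gamma_i\trianglelefteq G$, equivalently a surjection from $G$ onto a finite group with torsion-free kernel, chosen so that the peripheral subgroup $\Gamma_i\cap\Lambda$ is the Bieberbach group $\pi_1(E_i)$. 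Setting $M_i=\matH^4/\Gamma_i$ then yields an orientable finite-volume hyperbolic $4$-manifold, and since $\Gamma_i$ is normal in the one-cusped $G$, the deck group $G/\Gamma_i$ acts transitively on the cusps of $M_i$; transitivity in turn forces every cusp to have the common type $E_i$.

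The cases $i=1,2,6$ only require the holonomy to land in a $2$-group, so here the needed homomorphism can be taken with target an elementary abelian $2$-group, in the spirit of facet-colourings of $P$; the case $i=6$ reproves the existence of a cusp-transitive $E_6$-manifold from \cite{FKS}. The genuinely new point, and the main obstacle, is $i=4$: the cusp type $E_4$ has holonomy $\matZ/4\matZ$, which is not a $2$-group, so no colouring by an elementary abelian $2$-group can produce it. To realize it I would use the order-$4$ rotational symmetry of the cube vertex figure, promoted to a parabolic screw motion fixing an ideal vertex and acting on the horosphere $\matR^3$ as a rotation by $\pi/2$ about a vertical axis composed with a vertical translation, together with two horizontal translations spanning the square lattice; this subgroup is the desired $\pi_1(E_4)$. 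The delicate verifications are that the resulting $\Gamma_4$ is torsion-free and acts freely on all of $\matH^4$ (parabolic screw motions are fixed-point-free, but one must check that the rotational symmetries used do not combine to create elliptic elements), and that the peripheral subgroup is genuinely $E_4$ rather than some other flat manifold carrying an order-$4$ symmetry. I would confirm the latter by computing the point group and first homology of $\Gamma_4\cap\Lambda$ and matching them against the classification $E_1,\dots,E_6$.
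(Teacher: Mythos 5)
Your proposal has a concrete obstruction at its core, precisely in the case $i=4$ that is the main novelty of the theorem. You require $\Gamma_i\trianglelefteq G$ with $G$ the full (orientation-preserving) symmetry group of the honeycomb, and you want the peripheral subgroup $\Gamma_i\cap\Lambda$ to be $\pi_1(E_i)$. But normality of $\Gamma_i$ in $G$ forces $\Gamma_i\cap\Lambda$ to be normal in the cusp stabilizer $\Lambda$, hence its holonomy group must be a normal subgroup of the point group of $\Lambda$. For the cubical vertex figure of the $24$-cell that point group is the rotation group of the cube, isomorphic to $S_4$, whose only normal subgroups are $1$, $V_4$, $A_4$, $S_4$. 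In particular no normal Bieberbach subgroup of $\Lambda$ can have holonomy $\matZ/4\matZ$ (nor the face-rotation $\matZ/2\matZ$), so $E_4$ and $E_2$ simply cannot occur as $\Gamma\cap\Lambda$ for $\Gamma$ normal in the full $G$: the order-$4$ screw axis you want is not invariant under the symmetries of $G$ that permute the coordinate axes of the cube. To salvage the idea you would have to replace $G$ by a proper subgroup that still acts transitively on the ideal vertices but has a smaller, non-axis-permuting vertex stabilizer --- and exhibiting such a subgroup together with a torsion-free normal subgroup of it having the prescribed peripheral intersection is exactly the hard content of the theorem, which your write-up asserts rather than performs (even for $i=1,6$ the colouring is only gestured at, and for $i=4$ you explicitly defer the freeness and identification checks).

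For comparison, the paper avoids this obstruction by never working with the full symmetry group of a honeycomb. It starts from Im Hof's Coxeter $4$-polytope $P_0$, which has a \emph{single} ideal vertex with link a prism over a $(2,4,4)$-triangle, doubles it repeatedly to $P_7$ and $P_8$, and then glues unbounded facets of these polytopes to produce a $1$-cusped orbifold-with-corners $R_i$ (a ``developable reflectofold'') whose cusp cross-section is already the manifold $E_i$ by construction; the axis-permuting symmetries that kill your approach are simply not present in the local groups of $R_i$. Davis' basic construction applied to $R_i$ then yields a manifold tessellated by copies of $R_i$, with a symmetry group acting transitively on those copies and hence on the cusps, each cusp being isometric to the cusp of $R_i$. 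The delicate point there is developability of $R_i$ (embedded facets and consistent corner angles), which is what forces the iterated doubling; your proposal has no counterpart of this verification.
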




Our method to produce cusp-transitive hyperbolic manifolds a priori works in arbitrary dimension (but not a posteriori: there is no finite-volume hyperbolic Coxeter polytope of dimension $\geq 996$ \cite{P}). The manifold $M_i$ is built by orbifold covering a hyperbolic Coxeter polytope $P_0$ such that:

\begin{itemize}
\item[\mylabel{item:a}{(a)}] it has exactly one ideal vertex;
\item[\mylabel{item:b}{(b)}] 
if a bounded facet and an unbounded facet intersect, then 
their dihedral angle is 
an even submultiple of $\pi$.
\end{itemize}

The construction roughly goes as follows. We glue toghether some copies of $P_0$, so as to get a hyperbolic manifold with corners $R_i$ satisfying the following properties. First, $R_i$ is $1$-cusped, and this will follow from \ref{item:a}. By construction the cusp will have section $E_i$ ($i=1,2,4,6$). Second, $R_i$ is locally a Coxeter polytope (a so-called \emph{reflectofold}), and this will follow from \ref{item:b}: when gluing two facets the dihedral angle is indeed doubled, and hence it is still an integral submultiple of $\pi$. It is homeomorphic to $E_i \times [0,+\infty)$, and its boundary is stratified into connected closed sets: facets, corners, edges and vertices of dimension $3, 2, 1$ and $0$, respectively. Third, we need to perform the gluing in such a way that $R_i$ is \emph{developable}, that is (see for instance \cite[Section 3]{CD}):
\begin{enumerate}
\item \label{item:1} the facets are embedded (and not just immersed) hyperbolic manifolds with corners;
\item \label{item:2} if two facets intersect, then the dihedral angles at all the corresponding corners coincide.
\end{enumerate}
These properties of $R_i$ allow us to apply to $R_i$ Davis' ``basic construction'' \cite{D}, and to get a manifold $M_i$ tessellated by some copies of $R_i$, with a group of symmetries $G_i$ such that $M_i/G_i \cong R_i$. So $M_i$ is cusp transitive and its cusps have section $E_i$.

Ensuring \eqref{item:1} and \eqref{item:2} is the most technical point of the construction. Indeed, there is an easy way to glue some copies of $P_0$ in order to get a $1$-cusped reflectofold $R_i$ with cusp section $E_i$, but the resulting $R_i$ would not be developable. Hence we iteratively double the polytope $P_0$, obtaining a sequence $P_0,\dots,P_m$ of polytopes which satisfy the properties \ref{item:a} and \ref{item:b}. We continue to double until we find a gluing for a polytope $P_m$ giving a developable $R_i$.

We want to obtain a cover $M_i$ of $R_i$, with cusps isometric to the one of $R_i$, and this will follow from the fact that $M_i$ is tessellated by copies of $R_i$. Note that a generic cover of $R_i$ has cusps non-homeomorphic to the one of $R_i$. The authors in \cite{N,MR2525031} find an orbifold with one cusp of the desired type, and then, by a separability argument, they find a manifold cover with a cusp of the same type. We do the same thing, but our construction guarantees that all the cusps of $M_i$ are of the same type of the cusp of $R_i$, since the construction is more geometric. Indeed we obtain $M_i$ by gluing copies of $R_i$, which is built explicitly. 



Among the hyperbolic Coxeter $n$-polytopes with $n \geq 4$ that we have in hand,\footnote{See Felikson's web page \href{www.maths.dur.ac.uk/users/anna.felikson/Polytopes/polytopes.html}{\url{www.maths.dur.ac.uk/users/anna.felikson/Polytopes/polytopes.html}}.} we found only one $P_0$ satisfying \ref{item:a} and \ref{item:b}, among Im Hof's polytopes associated to Napier's cycles \cite{IH}. We find cusp-transitive manifolds of dimension four because $P_0$ is $4$-dimensional, and we realize only some cusp types because of the particular link type of the ideal vertex of $P_0$: a prism over a $(2,4,4)$-triangle.  Note indeed that $E_1, E_2, E_4$ and $E_6$ can be tessellated by right parallelepipeds, and thus by such a prism. We would like to apply our construction to other polytopes, for example in dimension greater than $4$, but we did not find any other polytope with the desired properties.

\begin{quest}
Does there exist a finite-volume hyperbolic Coxeter polytope of dimension $n \geq 4$ satisfying \ref{item:a} and \ref{item:b}? If the dimension is $n=4$ we require that the link of the ideal vertex is not a parallelepiped nor a prism over a $(2,4,4)$-triangle.
\end{quest}

A positive answer to the latter question may give, by our methods, a positive answer to the following question.

\begin{quest}
Does there exist a cusp-transitive hyperbolic $4$-manifold with cusps of type $E_3$ or $E_5$?
\end{quest}

We would like to improve the method in a future work, with the hope of producing original examples of $1$-cusped manifolds. In principle this may be done, instead of developing such an $R_i$, by closing it up gluing its facets. This is much more difficult (and sometimes impossible by some immediate obstructions), but has the advantage that more polytopes may be used, since in this case the quite restrictive property \ref{item:b} is not necessarily needed. 

\begin{quest}
For which $i = 3, 4, 5, 6$ does there exist a $1$-cusped hyperbolic $4$-manifold whose cusp has type $E_i$? Can moreover such a $4$-manifold be orientable when $i = 4, 6$?
\end{quest}

\begin{quest}
For which dimension $n>4$ does there exist a $1$-cusped hyperbolic $n$-manifold?
\end{quest}

The paper is organized as follows. In Section \ref{section2} we describe how to obtain a cusp-transitive manifold from a $1$-cusped developable reflectofold. In Section \ref{sec:proof} we double the polytope $P_0$ many times in order to obtain two Coxeter polytopes which we will use in Section \ref{reflect} in order to get some $1$-cusped developable reflectofolds.

I would like to thank my advisor Stefano Riolo for all his help.

%
%
%

\section{Cusp-transitive manifolds from \texorpdfstring{$1$}{1}-cusped reflectofolds} \label{section2}

In this section we describe a general method to build a cusp-transitive, hyperbolic manifold from a $1$-cusped reflectofold.

Recall that a \emph{hyperbolic Coxeter polytope} is a finite convex polytope $P \subset \matH^n$ whose dihedral angles are integral submultiples of $\pi$. We call \emph{facets} and \emph{ridges} the $(n-1)$-dimensional and $(n-2)$-dimensional faces of such a $P$, respectively. We refer to Vinberg's paper \cite{V} for the general theory of hyperbolic Coxeter polytopes and groups.

To a hyperbolic Coxeter polytope $P$ one associates a decorated graph, called the \emph{Coxeter diagram} of $P$ and defined as follows. The graph has a node for each bounding hyperplane, and an edge joining nodes $i$ and $j$ has label $m_{ij}$ if the corresponding hyperplanes intersect with dihedral angle $\frac{\pi}{m_{ij}}$. By usual convention, the label is $m_{ij} = \infty$ when the two hyperplanes are tangent at infinity, and the edge of the graph is omitted (resp.~dashed) when $m_{ij}=2$ (resp.~the two hyperplanes are ultraparallel). 

\begin{defn}
    We say that a complete hyperbolic manifold $R$ with boundary is a \emph{reflectofold} if $R$ is locally a hyperbolic Coxeter polytope.\footnote{Though we will not strictly need to deal with the orbifold theory, let us notice that a reflectofold $R$ is isometric to a hyperbolic orbifold (sometimes called in the literature \emph{Coxeter orbifold}). In other words, we have $R \cong \matH^n/\Gamma$ for some discrete subgroup $\Gamma < \Isom(\matH^n)$. To avoid confusion with the terminology, let us notice the following.
    Even if in the category of manifolds with boundary sometimes $\partial R \neq \emptyset$ and $R$ is orientable, if seen in the orbifold category such an $R$ is non-orientable and without boundary. Unless otherwise stated, we will consider $R$ as a manifold with boundary.}    
\end{defn}

Let $R$ be a reflectofold. The stratification of each local model $P$ of $R$ into $k$-dimensional faces, $k = 0, \ldots, n$, naturally induces a stratification of $R$ into maximal, connected, totally geodesic submanifolds (with boundary), called $k$-\emph{faces}. The $(n-1)$-faces and the $(n-2)$-faces of $R$ will be called \emph{facets} and \emph{corners}, respectively. The \emph{dihedral angle} of a corner is the dihedral angle of the corresponding ridge of a local model. 

\begin{defn}
   A reflectofold is \emph{developable} if the following hold:
    \begin{itemize}
        \item[\mylabel{EF}{(EF)}] \emph{Embedded faces}: For each corner $C$ there are two distinct facets $F$ and $F'$ such that $C \subset F \cap F'$.
        \item[\mylabel{AC}{(AC)}] \emph{Angle consistency}: If two distinct facets $F$ and $F'$ intersect, then the dihedral angles of all the corners in $F \cap F'$ coincide.
    \end{itemize}    
\end{defn}


Given a developable reflectofold $R$, we denote by $G_R$ the Coxeter group defined by the following presentation. For each facet $f$ of $R$ there is the generator $f$ and the relator $f^2$. Moreover, there is the relator $(fg)^k$ for every pair of facets $f$ and $g$ which intersect with dihedral angle $\frac{\pi}{k}$.


Let us now apply Davis' ``basic construction'' to $R$ and $G_R$ \cite{D}.
We define a space $\widetilde{R}$ as follows. We take $\{gR\}_{g\in G_R}$, a set copies of $R$. For every generator $f$ of $G_R$, we glue the copies $gR$ and $fgR$ identifying the two facets corresponding to $f$ via the map induced by the identity.

\begin{prop}
\label{tassell}
    Let $R$ be a developable reflectofold. Then 
    $R$ is isometric to the quotient of a hyperbolic manifold $M$ tessellated by copies of $R$, by a finite group $G$ of isometries. 
\end{prop}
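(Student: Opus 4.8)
The plan is to run Davis' basic construction to produce the space $\widetilde{R}$, to endow it with a hyperbolic structure on which $G_R$ acts by isometries with quotient $R$, and only then to cut it down to finite size by passing to a finite-index subgroup of $G_R$; this last step supplies the finite group $G$ and the tessellated manifold $M$. Throughout I use that, by definition, the reflectofold is a hyperbolic orbifold $R \cong \matH^n/\Gamma$ for a discrete $\Gamma < \Isom(\matH^n)$, with $\Gamma = \pi_1^{\mathrm{orb}}(R)$ generated by the reflections in the walls of $\matH^n$ lying above the facets of $R$.

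First I would put a hyperbolic structure on $\widetilde{R}$ by building a homomorphism $\phi\colon \Gamma \to G_R$ that sends the reflection in a wall lying above a facet $f$ to the generator $f$. This is exactly where developability enters. To see that $\phi$ is well defined one checks that every defining relation of $\Gamma$ maps to a relation of $G_R$: a relation $(rr')^k=1$ coming from two walls meeting along a ridge above a corner $C$ maps to $(ff')^k=1$, where by \ref{EF} the two facets $f,f'$ containing $C$ are \emph{distinct}, and by \ref{AC} the exponent $k$ is the common value of the dihedral angle along $f\cap f'$, so that $(ff')^k$ is genuinely a relator of $G_R$. Surjectivity is clear, since the facets generate $G_R$. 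Setting $N=\ker\phi$, orbifold covering theory identifies $\widetilde{R}$ with $\matH^n/N$: the quotient $\widetilde{R}\to R$ is the orbifold cover associated to the surjection $\Gamma \twoheadrightarrow G_R$, with deck group $G_R=\Gamma/N$ permuting the copies $\{gR\}_{g\in G_R}$ and quotient $\matH^n/\Gamma=R$.

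The main obstacle is to upgrade this from an orbifold to a \emph{manifold}, i.e.\ to prove that $N$ is torsion-free. Every torsion element of $\Gamma$ is conjugate into a finite subgroup, namely the stabilizer of a face, which is the finite Coxeter group generated by the reflections in the walls through that face. Invoking \ref{EF} (those walls cover pairwise distinct facets, so the generators are distinct) and \ref{AC} (their pairwise angles are the prescribed $\pi/k_{ij}$), this finite subgroup maps under $\phi$ isomorphically onto the corresponding special subgroup of $G_R$; hence $\phi$ is injective on every finite subgroup, and so $N$ is torsion-free. Consequently $\widetilde{R}=\matH^n/N$ is a complete hyperbolic manifold, tessellated by the copies $\{gR\}_{g\in G_R}$, on which $G_R$ acts properly discontinuously by isometries with finite stabilizers (the special subgroups) and quotient $R$.

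Finally I would reduce $\widetilde{R}$ to finite size. As a Coxeter group, $G_R$ is finitely generated and linear, hence residually finite, so by Selberg's lemma together with the normal-core construction it contains a finite-index \emph{normal torsion-free} subgroup $H\triangleleft G_R$. Since $H$ is torsion-free and the point-stabilizers of the $G_R$-action are finite, $H$ acts freely and properly discontinuously on $\widetilde{R}$, so that $M:=\widetilde{R}/H$ is a complete hyperbolic manifold tessellated by the $[G_R:H]=|G|$ copies of $R$ indexed by $H\backslash G_R$. Because $H$ is normal, $G:=G_R/H$ is a finite group acting on $M$ by isometries with $M/G=\widetilde{R}/G_R\cong R$, which is the desired conclusion; when $G_R$ is already finite one simply takes $H=\{1\}$ and $M=\widetilde{R}$. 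The only delicate point is the torsion-free claim of the previous paragraph, where both hypotheses \ref{EF} and \ref{AC} are indispensable, while the passage to the finite quotient is routine.
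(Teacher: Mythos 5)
Your argument reaches the right conclusion but by a genuinely different route from the paper's. The paper never passes through the orbifold fundamental group: it takes the glued-up space $\widetilde{R}$ as the primary object and proves directly that it is a hyperbolic manifold by a local computation, checking that the link of each $k$-face of $\widetilde{R}$ is the basic construction over a spherical Coxeter $(n-k-1)$-simplex $S$ and its group $G_S$ (a special subgroup of $G_R$), hence a round sphere; \ref{EF} and \ref{AC} are what make $G_S$ the correct special subgroup. You instead realize $\widetilde{R}$ as $\matH^n/N$ with $N=\ker(\phi\colon\Gamma\to G_R)$ and prove $N$ is torsion-free by showing $\phi$ is injective on the finite local groups. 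These are two faces of the same computation --- injectivity of $\phi$ on a point stabilizer is equivalent to the link at that point being a round sphere --- and both hinge on \ref{EF} and \ref{AC} at exactly the same spot. Your version buys a global covering-space picture (and makes completeness and the isometric $G_R$-action transparent); the paper's version stays elementary and avoids invoking that a reflectofold is a good orbifold, which it explicitly declines to use. The final reduction to a finite group via a torsion-free, finite-index, normal subgroup of $G_R$ is identical in the two proofs.

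One imprecision should be repaired: $\Gamma=\pi_1^{\orb}(R)$ is \emph{not} in general generated by the reflections in the walls over the facets. A closed hyperbolic manifold with empty boundary is vacuously a developable reflectofold with $G_R$ trivial, yet $\Gamma=\pi_1(R)$ is nontrivial and contains no reflections; for the $R_i$ of this paper, $\Gamma$ likewise contains the cusp subgroup. Consequently $\phi$ is not determined by its values on reflections alone: you must also declare that $\phi$ kills the image of $\pi_1$ of the underlying space, and verify well-definedness against the full presentation of $\Gamma$ (the relations of $\pi_1(|R|)$ and the conjugation relations between reflection loops along interior paths, in addition to the corner relations you checked). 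This is routine --- every reflection in a wall over a facet $f$ maps to the same generator $f$ regardless of the connecting path, because facets are connected --- but as written your homomorphism is not yet defined on all of $\Gamma$. With that repaired, the argument is complete.
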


By \emph{tessellated by copies of $R$} we mean that $M$ can be decomposed into some copies of $R$ in such a way that the intersection of any two copies is a union of faces.

\begin{proof}
    
    
    We begin proving that $\widetilde{R}$ is a hyperbolic manifold.

    Internally to the copies of $R$ the space $\widetilde{R}$ is locally isometric to $\matH^n$. We have to check what happens near the boundary of the copies of $R$. In particular, we have to check that, given a $k$-face $f$ of a copy $gR$, the link of $f$ in $\widetilde{R}$ is isometric to the round sphere $\matS^{n-k-1}$. 

    The link of a $k$-face $F$ of $R$ is a spherical Coxeter $(n-k-1)$-simplex $S$. 
    It is well known \cite[Section $4.1$]{D} that the abstract Coxeter group $G_S$ associated to $S$ embeds in $G_R$, it is generated by the corresponding subset of the generators of $G_R$ and the relators between them are the ones from the presentation of $G_R$.
    Hence the link of the $k$-face $F$ in $\widetilde{R}$ is the basic construction associated to $S$ and $G_S$, and is isometric to $\matS^{n-k-1}$. We have proved that $\widetilde{R}$ is a hyperbolic manifold. It is complete by construction.



    Since Coxeter groups are virtually torsion free \cite[Corollary $D.1.4$]{D}, we can take a normal subgroup $G'_R\triangleleft G_R$ of finite index and with no torsion.
    The group $G_R$ acts on $\widetilde{R}$ by isometry preserving the tassellation of $\widetilde{R}$ in copies of $R$, and $\widetilde{R}/G_R\cong R$. 
    Since $G'_R$ is torsion free it acts freely on $\widetilde{R}$.
    Hence $M\coloneqq \widetilde{R}/G'_R$ is a hyperbolic manifold. We set $G\coloneqq G_R/G'_R$.
    Since $\widetilde{R}/G_R \cong R$, we have $M/G \cong R$.   
    \end{proof}

Recall now the definition of cusp transitivity from the introduction. We immediately get:

\begin{cor}
\label{maincor}
    Let $R$ be an orientable, finite-volume, developable reflectofold. If $R$ has compact boundary and exactly one cusp $C$, then there exists an orientable, cusp-transitive, hyperbolic manifold $M$ with cusps isometric to $C$. 
\end{cor}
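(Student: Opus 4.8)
The plan is to bootstrap the corollary from Proposition~\ref{tassell}: that proposition already produces a hyperbolic manifold $M$ tessellated by copies of $R$ with $M/G \isom R$ for a finite group $G$ of isometries, so it remains to upgrade this $M$ to be orientable and of finite volume, and to read off cusp-transitivity together with the isometry type of the cusps. I would keep the notation from the proof of Proposition~\ref{tassell}: the complete hyperbolic manifold $\widetilde R$ tessellated by the copies $\{gR\}_{g\in G_R}$, the isometric action of $G_R$ with $\widetilde R/G_R \isom R$, a torsion-free finite-index normal subgroup $G'_R \triangleleft G_R$, and $M=\widetilde R/G'_R$, $G=G_R/G'_R$. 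Finite volume is then immediate: $M$ is the union of the $|G|$ copies of $R$ indexed by $G$, so $\Vol(M)=|G|\cdot \Vol(R)<\infty$.

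Next I would arrange orientability. Since $R$ is orientable and $\widetilde R$ is obtained by gluing copies of $R$ along their facets by local reflections (a repeated doubling, the consistency coming from the length-parity $2$-coloring of $G_R$), the manifold $\widetilde R$ is orientable. Recording whether an element of $G_R$ preserves or reverses this orientation defines a homomorphism $G_R \to \{\pm 1\}$ whose kernel $G_R^+$ has index at most two. Replacing $G'_R$ by $G'_R\cap G_R^+$, which is still torsion-free, of finite index, and normal in $G_R$, I may assume $G'_R\le G_R^+$; then $G'_R$ acts on $\widetilde R$ preserving orientation, so $M=\widetilde R/G'_R$ is orientable, and the replacement does not disturb the fact that $M$ is tessellated by copies of $R$.

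The heart of the matter is that every cusp of $M$ is \emph{isometric} to $C$, and here I would use crucially that $\partial R$ is compact. Because $R$ has a single cusp $C$ and compact boundary, there is a cusp neighborhood $U\isom N\times[T,+\infty)$ of $C$, with $N$ the (necessarily closed flat) cross-section, which is disjoint from $\partial R$ and hence from every facet of $R$. In the basic construction the copies $\{gR\}$ are glued only along facets, so the cusp neighborhood of each copy meets no gluing locus: it embeds in $\widetilde R$ as a genuine cusp neighborhood isometric to $U$, and distinct copies yield distinct cusps. Passing to $M=\widetilde R/G'_R$, the group $G'_R$ (being torsion-free) permutes the copies, and hence these cusps, freely, so each cusp of $M$ is still isometric to $U$, i.e.\ to $C$. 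Cusp-transitivity is then formal: $G$ acts on $M$ by isometries with $M/G\isom R$, and since $R$ has exactly one cusp every cusp of $M$ maps to $C$, so all cusps of $M$ lie in a single $G$-orbit; thus $\Isom(M)$ acts transitively on the cusps.

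The step I expect to be the genuine obstacle is the cusp-isometry claim, not the bookkeeping around volume, orientation, or transitivity. A priori a finite cover of the reflectofold $R$ can have cusps that are nontrivial finite covers of $C$ rather than isometric copies of it, and there is no soft way to prevent this. The hypothesis that $\partial R$ is compact is exactly what removes the difficulty: it forces the cusp neighborhoods to stay away from the reflection locus and therefore to embed isometrically into $M$ instead of being folded onto themselves or stacked along the gluing. Once this embedding is secured, every remaining assertion of the corollary follows mechanically from Proposition~\ref{tassell}.
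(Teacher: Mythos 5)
Your proposal is correct, and its core (reading off finite volume, cusp type, and cusp-transitivity from the tessellation of $M$ by copies of $R$ and from $M/G\cong R$) matches the paper's proof; the genuine divergence is in how orientability is secured. The paper takes the $M$ produced by Proposition~\ref{tassell} as is and, if it happens to be non-orientable, passes to its orientable double cover $\widetilde M$, which then forces a second round of verification: that each cusp of $M$ lifts to two cusps isometric to it (using orientability of $R$), and that $\widetilde M$ is still cusp-transitive (by combining the deck involution with lifts of the isometries of $M$). You instead make $M$ orientable at the source: you observe that $\widetilde R$ is orientable via the sign character of the Coxeter group $G_R$ (alternating orientations on chambers by word-length parity), and replace $G'_R$ by $G'_R\cap G_R^+$, which remains normal, torsion-free and of finite index, so the quotient is orientable and still tessellated by copies of $R$. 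This is cleaner in that it avoids re-proving cusp-transitivity for a further cover, at the cost of having to justify orientability of the basic construction $\widetilde R$ — a standard but nontrivial fact that the paper never needs. A second, smaller difference is that the paper disposes of the cusp-isometry claim in one sentence (``because $R$ is $1$-cusped and $M/G\cong R$''), whereas you correctly isolate the real content: compactness of $\partial R$ forces a cusp neighborhood $U$ of $R$ to be disjoint from all facets, so the copies $gU$ avoid the gluing locus, are permuted freely by $G'_R$, and descend injectively to cusp neighborhoods of $M$ isometric to $C$. Your more explicit treatment is a strengthening, not a deviation, and I find no gap in the argument.
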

\begin{proof}
    The manifold $M$ of Proposition \ref{tassell} is cusp transitive  and its cusps are isometric to $C$ because $R$ is $1$-cusped and $M/G \cong R$. If $M$ is non-orientable, it can be replaced by its orientable double cover $\widetilde{M}$. Indeed, for every cusp $D$ of $M$, the cover $\widetilde{M}$ has two cusps isometric to $D$ (since $R$ is orientable). Moreover, $\widetilde{M}$ is cusp-transitive. Indeed, we can send every cusp to another one using the involution $i$ of $\widetilde{M}$ such that $\widetilde{M}/\langle i \rangle\cong M$ and the liftings of the isometries of $M$ which realize the cusp-transivity of $M$ (we can lift them since an isometry sends an orientable tubolar neighborhood of a loop to an orientable tubolar neighborhood of a loop).
\end{proof}



Hence, in order to prove Theorem \ref{thm:main}, we will build an orientable, finite-volume, $1$-cusped, developable reflectofold with compact boundary, whose cusp has type $E_i$, for $i=1,2,4,6$.

\section{The polytopes} \label{sec:proof}

In this section we build some Coxeter polytopes satisfying \ref{item:a} and \ref{item:b}. We will use them in Section \ref{reflect} to build some $1$-cusped developable reflectofolds.

In Section \ref{P0} we introduce Im Hof's Coxeter polytope $P_0$. Then, in Section \ref{P7eP8} we describe a way to obtain a sequence $P_0,P_1,\dots,P_8$ of Coxeter polytopes satisfying \ref{item:a} and \ref{item:b}, by iteratively doubling $P_0$, and we describe how to study them. In Section \ref{costruzione} we build the sequence of polytopes, and we obtain the information on the polytopes using the results of Section \ref{P7eP8}.

\subsection{The polytope \texorpdfstring{$P_0$}{P\_0}}\label{P0}
In this section we introduce a Coxeter polytope from \cite{IH}.

Consider the following Coxeter diagram $D$:

\begin{center}
\begin{tikzpicture}[thick,scale=0.8, every node/.style={transform shape}]
\node[draw, circle, inner sep=2pt, minimum size=3mm] (1) at (-1.56,-1.25)  {6};
\node[draw, circle, inner sep=2pt, minimum size=3mm] (2) at (0,-2)         {5};
\node[draw, circle, inner sep=2pt, minimum size=3mm] (3) at (1.56,-1.25)   {4};
\node[draw, circle, inner sep=2pt, minimum size=3mm] (4) at (1.95,0.45)  {3};
\node[draw, circle, inner sep=2pt, minimum size=3mm] (5) at (0.87,1.80)  {2};
\node[draw, circle, inner sep=2pt, minimum size=3mm] (6) at (-0.87,1.80) {1};
\node[draw, circle, inner sep=2pt, minimum size=3mm] (7) at (-1.95,0.45) {7};

\node at (-0.87,-1.80)  {$4$};
\node at (0.87,-1.80)   {$4$};
\node at (1.95,-0.45)   {$6$};
\node at (1.56,1.25)  {$4$};
\node at (0,2)        {$\infty$};
\node at (-1.56,1.25) {$4$};
\node at (-1.95,-0.45)  {$6$};

\draw (1) -- (2) node[above,midway]   {};
\draw (2) -- (3) node[above,near end] {};
\draw (3) -- (4) node[above,midway]   {};
\draw (4) -- (5) node[above,midway]   {};
\draw (5) -- (6) node[above,midway]   {};
\draw (6) -- (7) node[above,midway]   {};
\draw (7) -- (1) node[above,midway]   {};
\end{tikzpicture}
\end{center}

\begin{prop}
\label{riolo}
The graph $D$ above is the Coxeter diagram of a finite-volume hyperbolic Coxeter $4$-polytope $P_0$ which satisfies \emph{\ref{item:a}} and \emph{\ref{item:b}}. The horospherical link of the unique ideal vertex of $P_0$ is a Euclidean right prism over a triangle with inner angles $\frac{\pi}{2}$, $\frac{\pi}{4}$, $\frac{\pi}{4}$, and its Coxeter diagram is the subdiagram of $D$ spanned by the vertices $1,2,4,5,6$.
\end{prop}
\begin{proof}
    We already know from \cite{IH} that $P_0$ has finite volume. By \cite{V}, the ideal vertices correspond to the maximal affine subdiagrams of the Coxeter diagram. In $D$ we have exactly one of this kind (see \cite[Table 2]{V}), spanned by the vertices $1,2,4,5,6$.
\end{proof}


\subsection{The sequence of polytopes}\label{P7eP8} The purpose of this section is to fix some notation, and to describe how to build and study some new Coxeter polytopes satisfying \ref{item:a} and \ref{item:b} by doubling iteratively $P_0$ along some facets.




\begin{defn}
We say that a facet $F$ of a polyotpe $P$ is \emph{admissible} if, whenever it intersects another facet $K$ of $P$, then the dihedral angle between $F$ and $K$ is equal to $\frac{\pi}{2k}$ for some $k\in\matN$.
\end{defn}

We notice that every facet of $P_0$ is admissible. Indeed the numbers labelling the Coxeter diagram $D$ are even.

Given a  hyperbolic $n$-polytope $P$ and a facet $F$ of $P$, we denote by $r_F\colon\matH^n\rightarrow \matH^n$ the reflection through the unique hyperplane that contains $F$.
In Section \ref{costruzione}, we will construct a sequence of Coxeter polytopes in $\matH^n$ satisfying \ref{item:a} and \ref{item:b}:
$$
P_0,P_1,P_2,P_3,P_4,P_5,P_6,P_7,P_8,
$$
where $P_{n+1}=P_n\cup r_{F_n}(P_n)$, for some admissible, non-compact facet $F_n$ of $P_n$.
We say that $P_{n+1}$ is the \emph{double of $P_n$ along $F_n$}. Before the actual definition of $P_n$, we now fix some notation and deduce some information on such a sequence of polytopes in general. 

\begin{rem}
    Since $P_0$ is a Coxeter polytope and the facet $F_n$ of $P_n$ will be chosen to be admissible, also $P_1,\dots,P_8$ will be Coxeter polytopes.
\end{rem}

Let $V$ be the only ideal vertex of $P_0$ (recall Proposition \ref{riolo}). Notice that $P_n$ has exactly one ideal vertex for all $n$, and it is always $V$. Indeed, we always double along a non-compact facet.

Let $\lk_n$ be the link of the ideal vertex $V$ of $P_n$. It is a $3$-dimensional Euclidean polytope well-defined up to scaling.


\begin{rem}
    There is a natural bijection between the set of non-compact facets of $P_n$ and the set of the facets of $\lk_n$. Indeed, if we take a ``small" orosphere $O$ centred at $V$, then $O\cap P_n$ can be identified to $\lk_n$ and every facet of $\lk_n$ can be identified with the intersection of $O$ with a non-compact facet of $P_n$. Vice versa, every non-compact facet $F$ of $P_n$ meets $O$, and $O\cap F$ is a facet of $\lk_n$. Indeed, $P_n$ has exactly one vertex at infinity.
\end{rem}

\begin{notat}
We will call the facets of $\lk_0$ with the same name of the facets of $P_0$.
\end{notat}




Note that $\lk_{n+1}$ is the double of $\lk_n$ along its facet $F_n$.

The construction of $P_n$ induces a tessellation of $P_n$ in copies of $P_0$. In particular, we also have a tessellation of the facets of $P_n$ in copies of facets of $P_0$. We say that a facet is \emph{of type} $i$ if it is tessellated into copies of the facet $\textit{\textbf{i}}$ of $P_0$.

\begin{defn}
    Let $A$ be a facet of $P_n$. Let $A_1,\dots,A_k$ be the facets that meet $A$ and $\alpha_i$ be the dihedral angle at $A\cap A_i$. We define $I_n(A):=\{(A_1,\alpha_1),(A_2,\alpha_2),\dots,(A_k,\alpha_k)\}$. 
    
\end{defn}

\begin{rem}
\label{lista}
    If $(A,\frac{\pi}{2})\in I_n(F_n)$, then in $P_{n+1}=P_n\cup r_{F_n}(P_n)$ we have that $A\cup r_{F_n}(A)$ is a unique facet. Otherwise, if $(A,\frac{\pi}{2})\notin I_n(F_n)$ then $A$ and $r_{F_n}(A)$ are two distinct facets of $P_{n+1}$.
\end{rem}

\begin{notat}
    From now on, we will call a facet with the same name of the hyperplane that cointains it. 
    Hence, if $(A,\frac{\pi}{2})\in I_n(F_n)$, we have that $A\cup r_{F_n}(A)$ is a facet of $P_{n+1}$ that we call $A$ by a little abuse. 
\end{notat}

We now begin the first step of our construction.

\begin{defn}
    We define $P_1=P_0 \cup r_5(P_0)$.
\end{defn}

The facets of $P_1$ are: $\textbf{1}, \textbf{2},\textbf{3}, \textbf{4}, r_5(\textbf{4}), \textbf{6}, r_5(\textbf{6}), \textbf{7}$. Indeed, we can deduce the list using Remark \ref{lista} and the fact that $(\textbf{1},\frac{\pi}{2}),(\textbf{2},\frac{\pi}{2}),(\textbf{3},\frac{\pi}{2}),(\textbf{7},\frac{\pi}{2})\in I_0(\textbf{5})$, while $(\textbf{4},\frac{\pi}{2}),(\textbf{6},\frac{\pi}{2})\notin I_0(\textbf{5})$.

For a shorter notation we denote $r_5(\textbf{4})$ by $\textbf{4}_5$, and so on. Hence, with this convention, the facets are:
$\textbf{1}, \textbf{2}, \textbf{3}, \textbf{4}, \textbf{4}_5, \textbf{6}, \textbf{6}_5, \textbf{7}$. In this case the facets $\textbf{4}$ and $\textbf{4}_5$ are facets of type $4$, while $\textbf{6}$ and $\textbf{6}_5$ are facets of type $6$, and $\textbf{7}$ is a facet of type $7$.


By Proposition \ref{riolo} we know the Coxeter diagram for $\lk_0$.
Hence, the links $\lk_0$ and $\lk_1$ of the ideal vertex $V$ of $P_0$ and $P_1$ are the ones in Figure \ref{f1}.

\begin{figure}

\includegraphics{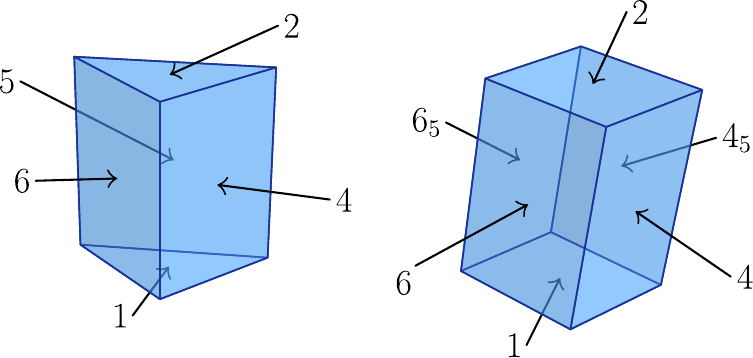}

    \caption{The link $L_0$ (left) and the link $L_1$ (right).}\label{f1}
\end{figure}

Since $P_1$ is tessellated by two copies of $P_0$, we have a tessellation of every facet of $P_1$ in one or two copies of a facet of $P_0$, and similarly for $L_1$ with $L_0$.

We collect some information on $P_0$ and $P_1$ via some pictures representing the facets of $L_0$ and $L_1$, respectively. The facets of $\lk_0$ and $\lk_1$ are represented in Figure \ref{f2}.
\begin{figure}
\includegraphics{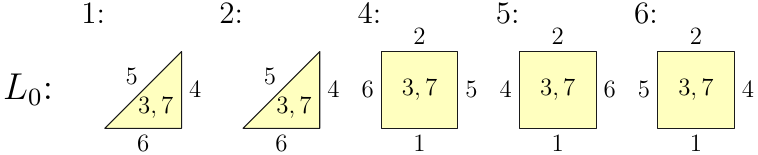}
\\
\vspace{5ex}
\includegraphics{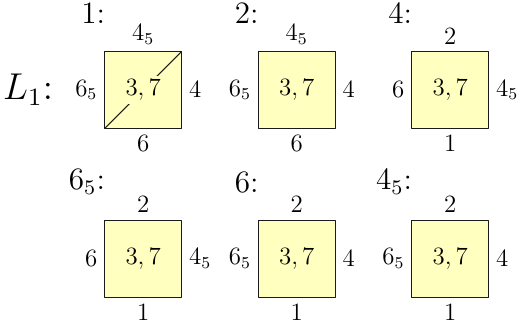}
    \caption{The facets of $L_0$ (top) and the facets of $L_1$ (bottom).}\label{f2}
\end{figure}
The meaning of these pictures is the following. Recall that $L_1$ is a right parallelepiped, so its facets are $6$ rectangles. Each of these rectangles is tessellated by one or two copies of a facet of $L_0$. In the picture, each of such rectangles is tiled by some tiles (squares or triangles). Each tile also corresponds to a tile of the tessellation of a facet of $P_1$. In the picture, each tile contains the labels of the compact facets of $P_1$ that intersect the corresponding tile in $P_1$. To avoid writing the same label in two adjacent tiles, we put the label on the edge dividing them, like for instance the labels $3$ and $7$ of the facet $\textbf{1}$. Moreover, outside of the tiles we have written the labels of some non-compact facets of $P_1$. The label of a non-compact facet $N$ is drawn near the edge of a tile if the corresponding tile in $P_1$ (a copy of a facet of $P_0$ in $P_1$) intersects $N$.

\begin{rem}
\label{motivazione}
    Since the link of the ideal vertex $V$ of $P_1$ is a parallelepiped, we could take the small covers of the cube \cite[Section 3]{FKS} to obtain three of the four desired reflectofolds (the ones with cusp section the $3$-torus, the $\frac{1}{2}$-twist manifold and the Hantzsche-Wendt manifold). The problem is that these reflectofolds are not developable. Hence we will iteratively double the polytope until we find a polytope $P$ such that we can glue $P$ in order to obtain a $1$-cusped, developable reflectofold with the desired cusp section.
\end{rem}

For the other steps of the construction we will keep track of the following information on $P_n$:
\begin{itemize}
    \item[\mylabel{item:I1}{(I1)}] the list of the facets;
    \item[\mylabel{item:I2}{(I2)}] the adjacency graphs of the facets of type $3$ and $7$;
    \item[\mylabel{item:I3}{(I3)}] the picture of the facets of $L_n$ tessellated and labelled with the previous convention.
\end{itemize}

\begin{notat}
     We extend the notation given for Step $1$ for the facets of $P_1$ to the facets of $P_n$. For example, we will see in Section \ref{costruzione} that $(r_4\circ r_{r_5(4)} \circ r_2) (\textbf{3})$ is a facet of $P_4$, and it will be denoted as $\textbf{3}_{4,4_5,2}$. The convention will be similar for the other facets.
\end{notat}

We will represent the adjacency graphs of the facets of type $3$ and of type $7$ of $P_n$ separately, as follows.
There is a vertex for each type-$3$ (respectively type-$7$) facet, and we connect two vertices with an edge with label $k$ if the two facets meet with dihedral angle $\frac{\pi}{k}$ (including the case with $k=2$). There is no edge joining two vertices of the graph if the two facets are at positive distance (they cannot be tangent at infinity since they are compact).

We will represent each adjacency graph via the associated adjacency matrix: in the entry corresponding to the vertices $A$ and $B$ we put $1$ if $A=B$, we put $0$ if there is no edge between them, and $k$ if there is an edge with label $k$ between them. For more clarity we omit the $0$ in the entries. 




\begin{prop}
\label{angolo}
    If two facets of type $i$ and $j$ of $P_n$ meet, with $i\ne j$, then the dihedral angle between them is the same of the one between the facets $\textbf{i}$ and $\textbf{j}$ of $P_0$. In particular the facets $\textbf{i}$ and $\textbf{j}$ of $P_0$ meet.
\end{prop}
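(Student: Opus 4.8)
The plan is to argue by induction on $n$, exploiting the recursive structure $P_{n+1}=P_n\cup r_{F_n}(P_n)$. The base case $n=0$ is a tautology: a facet of type $i$ of $P_0$ is literally the facet $\mathbf{i}$, so two meeting facets of types $i\neq j$ are $\mathbf{i}$ and $\mathbf{j}$ themselves, which meet and form the stated angle. For the inductive step I would assume the proposition for $P_n$ and deduce it for $P_{n+1}$. So I would take facets $X$ (type $i$) and $Y$ (type $j$) of $P_{n+1}$ with $i\neq j$ meeting along a corner $C$, and write $H$ for the hyperplane carrying $F_n$, so that $r_{F_n}=r_H$ is the reflection in $H$.

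The key geometric input, which I expect to be the main obstacle to state cleanly, is the claim that $C$ cannot be contained in $H$. To prove this I would suppose $C\subset H$ and analyze the local picture at a point $p$ in the relative interior of $C$. Since the relative interior of $F_n$ is interior to $P_{n+1}$, such a $p$ must lie on the relative boundary of $F_n$, so near $p$ the corner $C$ agrees with a ridge $F_n\cap A$ for a single facet $A$ of $P_n$ meeting $F_n$. By Remark \ref{lista} and admissibility of $F_n$, after doubling the two walls of $P_{n+1}$ through $p$ are exactly $A$ and $r_{F_n}(A)$; the subcase $A\perp F_n$ is excluded, because then $A$ and $r_{F_n}(A)$ merge into a single smooth facet and $C$ is not a corner at all. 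But $A$ and $r_{F_n}(A)$ are copies of the same facet of $P_0$, hence have equal type, contradicting $i\neq j$. This local analysis near $H$ is the delicate part; the rest is bookkeeping.

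Granting the claim, $C\not\subset H$ forces $C\cap H$ to be a proper lower-dimensional subset of $C$, so I may choose $p$ in the relative interior of $C$ with $p\notin H$. As $P_n$ and $r_{F_n}(P_n)$ lie on opposite sides of $H$, the point $p$ belongs to exactly one of them; after applying the isometry $r_{F_n}$ if necessary (it swaps the two halves, permutes the copies of $P_0$ in the tessellation and hence preserves types, and of course preserves dihedral angles) I may assume $p\in P_n\setminus H$. Since $p\in\partial P_{n+1}$ and $p$ is not in the relative interior of $F_n$, a neighborhood of $p$ in $P_{n+1}$ coincides with one in $P_n$, so near $p$ the facets $X$ and $Y$ restrict to facets $A'$ and $B'$ of $P_n$ of types $i$ and $j$ meeting along $C$. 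The inductive hypothesis applied to $A',B'$ in $P_n$ then gives that $\mathbf{i}$ and $\mathbf{j}$ meet in $P_0$ and that the dihedral angle of $A'\cap B'$ equals that of $\mathbf{i}\cap\mathbf{j}$ in $P_0$.

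To conclude I would note that the dihedral angle along the ridge $C$ is constant, since $X$ and $Y$ span fixed hyperplanes and the angle of $X\cap Y$ is just the angle between those two hyperplanes; hence it equals the angle of $A'\cap B'$ computed above, finishing the inductive step. The ``in particular'' clause is carried along for free by the same induction, being true in the base case and preserved at every doubling. In summary, the whole argument reduces to the single local fact that a corner of $P_{n+1}$ lying on the doubling hyperplane is always bounded by two facets of equal type, which is exactly why such corners never contribute to the $i\neq j$ case and why everything else descends to $P_n$.
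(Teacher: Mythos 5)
Your proof is correct, but it is organized rather differently from the paper's. The paper gives a direct, non-inductive argument from the global tessellation: since $P_n$ is tiled by copies of $P_0$ and a type-$i$ facet is tiled by copies of the facet $\mathbf{i}$ of $P_0$, the two facets ``meet in a copy of $P_0$'', and the angle is then read off from $P_0$ in one step. You instead induct on the number of doublings, descending from $P_{n+1}$ to $P_n$ one reflection at a time, and your key lemma --- that a ridge of $P_{n+1}$ contained in the doubling hyperplane is bounded by a facet and its mirror image, hence by two facets of equal type --- is precisely the content that the paper's phrase ``meet in a copy of $P_0$'' leaves implicit: one needs to know that a generic point of the ridge $A\cap B$ lies in the relative interior of a ridge of a single tile, i.e.\ that the ridge is neither contained in nor separated by the internal walls of the tessellation. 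So your version is longer but makes explicit a local analysis the published proof skips, while the paper's version buys brevity by asserting that step. Two small points to tighten: in the key claim you should take $p$ \emph{generic} in the relative interior of $C$ (avoiding the $1$-skeleton of $F_n$) so that near $p$ the corner genuinely coincides with a single ridge $F_n\cap A$; and it is worth saying once that the type of a facet of $P_{n+1}$ is well defined (a merged facet $A\cup r_{F_n}(A)$ is tiled by copies of a single facet of $P_0$), since both your descent step and the statement itself rely on it.
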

\begin{proof}
    The polytope $P_n$ is tessellated by some copies of $P_0$ and a facet of type $k$ is tessellated by some copies of the facet $\textit{\textbf{k}}$ of $P_0$. Hence the facet $A$ of type $i$ and the facet $B$ of type $j$ of $P_n$ meet in a copy of $P_0$. We deduce that the facets $\textit{\textbf{i}}$ and $\textit{\textbf{j}}$ of $P_0$ meet and the dihedral angle between them is the same of the dihedral angle between $A$ and $B$. 
\end{proof}

\begin{rem}
\label{cubo}
    If $n\ge 1$, then the link $L_n$ is a right parallelepiped. Indeed, $\lk_1$ is a right parallelepiped, at every step we double $P_n$ along a non-compact facet, and $P_n$ has exactly one ideal vertex. In particular, for every couple of non-compact facets of $P_n$ that meet, the corresponding dihedral angle is $\frac{\pi}{2}$.
\end{rem}

\begin{cor}
\label{suit}
    If $n\ge 1$, every facet of $P_n$ that is not of type $3$ or $7$ is non-compact and admissible.
\end{cor}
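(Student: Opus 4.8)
The plan is to treat the two assertions---non-compactness and admissibility---separately, exploiting the fact that a facet not of type $3$ or $7$ is tessellated by copies of one of the non-compact facets $\textbf{1},\textbf{2},\textbf{4},\textbf{5},\textbf{6}$ of $P_0$ (recall that, by Proposition \ref{riolo}, these are exactly the facets spanning the link $L_0$, equivalently the facets of $P_0$ containing the ideal vertex $V$, while $\textbf{3}$ and $\textbf{7}$ are the compact ones).

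For non-compactness, the key observation is that every reflection $r_{F_n}$ used in the construction is the reflection through a non-compact facet $F_n$, whose supporting hyperplane therefore contains $V$; hence $r_{F_n}$ fixes $V$, and so does the whole group generated by these reflections. A facet $A$ of $P_n$ of type $i\in\{1,2,4,5,6\}$ is a union of copies $g(\textbf{i})$ of the non-compact facet $\textbf{i}$ of $P_0$, with each $g$ a product of such reflections. Since $\textbf{i}$ contains $V$ and $g(V)=V$, every copy contains $V$; thus $A$ contains the ideal vertex $V$ and is non-compact.

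For admissibility, I would let $A$ be such a non-compact facet, take any facet $K$ meeting $A$, and verify that the dihedral angle is an even submultiple of $\pi$, splitting into two cases according to the compactness of $K$. If $K$ is non-compact, then by Remark \ref{cubo} the link $L_n$ is a right parallelepiped, so any two non-compact facets meet at angle $\frac{\pi}{2}=\frac{\pi}{2\cdot 1}$, which is admissible. If $K$ is compact, then $K$ has type $j\in\{3,7\}$, so in particular $A$ and $K$ have distinct types; Proposition \ref{angolo} then identifies the dihedral angle between $A$ and $K$ with the one between the facets $\textbf{i}$ and $\textbf{j}$ of $P_0$. As $\textbf{i}$ is unbounded and $\textbf{j}$ is bounded in $P_0$, property \ref{item:b} forces this angle to be an even submultiple of $\pi$, i.e.\ of the form $\frac{\pi}{2k}$. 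In all cases $A$ meets its neighbours at even submultiples of $\pi$, hence $A$ is admissible.

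The argument is essentially a case analysis assembling three facts already in hand, so I expect no serious obstacle; the only point requiring care is the bookkeeping behind the reduction to $P_0$. Specifically, one must ensure that Proposition \ref{angolo} genuinely applies, which needs $A$ and $K$ to be of distinct types---guaranteed here because $A$ is not of type $3$ or $7$ while $K$ is---and that the unbounded/bounded dichotomy invoked to apply \ref{item:b} matches the type/compactness correspondence established in the non-compactness step.
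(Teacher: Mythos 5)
Your proposal is correct and follows essentially the same route as the paper: non-compactness is deduced from the fact that a facet not of type $3$ or $7$ is tessellated by copies of a non-compact facet of $P_0$, and admissibility is obtained by the same case split on the compactness of the neighbouring facet, using Remark \ref{cubo} in the non-compact case and Proposition \ref{angolo} (reducing to the evenness of the labels of the Coxeter diagram of $P_0$, equivalently property \ref{item:b}) in the compact case.
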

\begin{proof}
    Since $3$ and $7$ are the only compact facets of $P_0$, every facet of a different type from $3$ and $7$ in $P_n$ is non-compact.
    
    We show that every non-compact facet is admissible. Let $A$ be a non-compact type-$i$ facet of $P_n$. Let $B$ be another facet of $P_n$ that meets $A$. If $B$ is non-compact, then by Remark \ref{cubo} the dihedral angle between them is $\frac{\pi}{2}$. If $B$ is compact, then $A$ and $B$ have different type. Hence, by Proposition \ref{angolo} the dihedral angle between them is $\frac{\pi}{2k}$ for some $k$, since this is true for every couple of facets of $P_0$ that meet.
\end{proof}

We now state a proposition that will allow to recover the needed information \ref{item:I1},  \ref{item:I2}, \ref{item:I3} on $P_{n+1}$ starting from that of $P_n$, for $n\ge 1$.

\begin{notat}
    In the following, if two vertices $F$ and $G$ of a graph are joined by an edge with label $k$, we denote this edge by $(F, G; k)$.
\end{notat}

Recall that $P_{n+1}$  will be the double of $P_n$ along a non-compact, admissible facet called $F_n$.

\begin{prop}
\label{procedura}
   If $n\ge1$, the information \ref{item:I1},  \ref{item:I2}, \ref{item:I3} on $P_{n+1}$ is obtained from the one on $P_n$ as follows.
    \begin{enumerate}
        \item[\ref{item:I1}] For every facet $G\ne F_n$ in the list \ref{item:I1} of $P_n$:
        \begin{itemize}
            \item If the label $G$  is in the picture of $F_n$ in \ref{item:I3} of $P_n$:
            \begin{itemize}
                \item if $G$ is of the same type as $F_n$, then add $G$ to the list \ref{item:I1} of $P_{n+1}$;
                \item if $G$ and $F_n$ are respectively of type $i$ and $j$ with $i\ne j$ and in the Coxeter diagram of $P_0$ there is not an edge between the vertices $i$ and $j$, then add $G$ to the list \ref{item:I1} of $P_{n+1}$;
                \item otherwise add $G$ and $r_{F_n}(G)$ to the list \ref{item:I1} of $P_{n+1}$.
            \end{itemize}
            \item otherwise add $G$ and $r_{F_n}(G)$ to the list \ref{item:I1} of $P_{n+1}$.
        \end{itemize}
        \item[\ref{item:I2}] The vertices of the two graphs of type $3$ and $7$ are the facets of type $3$ and $7$ in \ref{item:I1} of $P_{n+1}$, respectively. The edges of the graphs are obtained as follows.
        \begin{itemize}
            \item If in \ref{item:I2} of $P_n$ we have (F,G;k) then:
            \begin{itemize}
            \item If in \ref{item:I1} of $P_{n+1}$ we have $F,r_{F_n}(F),G,r_{F_n}(G)$, then in \ref{item:I2} of $P_{n+1}$ we add the edges $(F,G;k)$ and $(r_{F_n}(F),r_{F_n}(G);k)$;
                \item If in \ref{item:I1} of $P_{n+1}$ we have $F,r_{F_n}(F),G$ and not $r_{F_n(G)}$, then in \ref{item:I2} of $P_{n+1}$ we add the edges $(F,G;k)$ and $(r_{F_n}(F),G;k)$;
            \item If in \ref{item:I1} we have $F, G$ and not $r_{F_n}(F),r_{F_n}(G)$, then in \ref{item:I2} of $P_{n+1}$ we add $(F,G;k)$.
            \end{itemize}
            \item Let $F$ be a type-$3$ (or type-$7$) facet of $P_{n+1}$. If in \ref{item:I1} of $P_{n+1}$ we have $F$ and $r_{F_n}(F)$, the label $F$ is in the picture of the facet $F_n$ of $L_n$ in \ref{item:I3} of $P_n$, the facet $F$ is of type $i$, the facet $F_n$ is of type $j$ and the label of the edge between $i$ and $j$ in the Coxter diagram of $P_0$ is $2k$, then in \ref{item:I2} of $P_{n+1}$ we add $(F,r_{F_n}(F);k)$.
        \end{itemize}
        \item[\ref{item:I3}] Let $G\neq F_n$ be a facet of $L_n$. 
        \begin{itemize}
            \item If the picture of $G$ does not contain the label $F_n$, then in $P_{n+1}$ add the pictures of the facets $G$ and $r_{F_n}(G)$. For the picture of $G$ of $P_{n+1}$ we copy the one of $P_n$. For the picture of $r_{F_n}(G)$ of $P_{n+1}$ we copy the picture of $G$ of $P_n$ and, for every facet $F$ such that $r_{F_n}(F)$ is in \ref{item:I1} of $P_{n+1}$, we replace the label $F$ with $r_{F_n}(F)$. (An example is shown in Figure \ref{f3}.)

            \begin{figure}
            \includegraphics{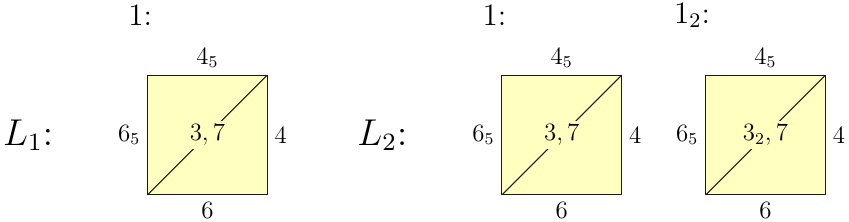}
                \caption{The facet $1$ of $L_1$ (left) and the facets $1$ and $1_2$ of $L_2$ (right).}\label{f3}
            \end{figure}

            \item If the picture of $G$ has the label $F_n$ near one edge, then in $P_{n+1}$ we add a picture of $G$ that is the double of the picture of $G$ of $P_n$ along the edge with label $F_n$, without reporting any label. Outside of the picture, near the edge that was present also in $P_n$ we put the same label, say $K$. Near the two edges that we doubled, we put the same label as before of doubling. Near the last edge, we put $r_{F_n}(K)$. The picture is tessellated in two copies $C_1$ and $C_2$ of the picture $G$ of $P_n$. In the copy $C_1$ corresponding to the one in $P_n$ we copy the labels inside the picture of $G$ in $L_n$. In the other copy $C_2$, we put the same labels, in a way that the resulting labels are symmetric with respect to the edge along which we doubled the picture, and now, for every label $K$ in $C_2$, if $r_{F_n}(K)$ is in \ref{item:I1} of $P_{n+1}$, we replace $K$ with $r_{F_n}(K)$. (An example is shown in Figure \ref{f4}.) 

            \begin{figure}
            \includegraphics{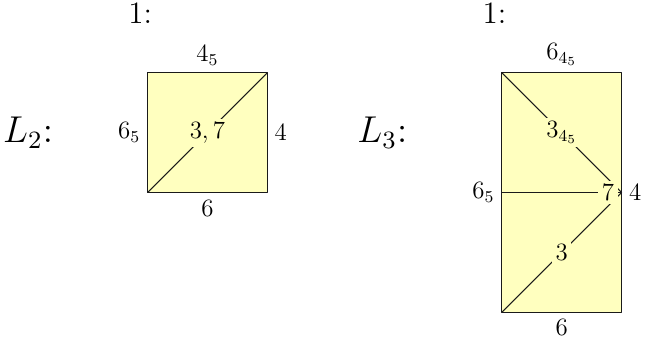}
                \caption{The facet $1$ of $L_2$ (left) and the facet $1$ of $L_3$ (right).}\label{f4}
            \end{figure}
        \end{itemize}
        
    \end{enumerate}
\end{prop}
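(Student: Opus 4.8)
The plan is to prove the three items \ref{item:I1}, \ref{item:I2}, \ref{item:I3} separately and in this order, reading off all the information from the metric double $P_{n+1}=P_n\cup r_{F_n}(P_n)$ and using repeatedly that $r_{F_n}$ is an isometry fixing the hyperplane of $F_n$ pointwise. For \ref{item:I1} the only thing to decide, by Remark \ref{lista}, is whether a facet $G\neq F_n$ of $P_n$ meets $F_n$ at a right angle (in which case $G$ and $r_{F_n}(G)$ merge into a single facet $G$ of $P_{n+1}$) or not (in which case they stay distinct). First I would note that $G$ meets $F_n$ if and only if its label appears in the picture of $F_n$ in \ref{item:I3} of $P_n$; if it does not, then $G\cap F_n=\emptyset$, so $(G,\tfrac{\pi}{2})\notin I_n(F_n)$ and we keep both $G,r_{F_n}(G)$. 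If $G$ does meet $F_n$, I would compute the dihedral angle: when $G$ has the same type as $F_n$ both facets are non-compact (as $F_n$ is), so by Remark \ref{cubo} the angle is $\tfrac{\pi}{2}$ and they merge; when $G$ and $F_n$ have distinct types $i\neq j$, Proposition \ref{angolo} says the angle equals the one between $\textbf{i}$ and $\textbf{j}$ in $P_0$, which is $\tfrac{\pi}{2}$ exactly when there is no edge between $i$ and $j$ in the diagram $D$ (merge), and is $<\tfrac{\pi}{2}$ otherwise (keep both). This is precisely the trichotomy in the statement.

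For \ref{item:I2} the vertex set is already fixed by \ref{item:I1}, so the work is in the edges. The key observation is that $P_{n+1}$ is a union of two isometric copies of $P_n$ glued along $F_n$, so every adjacency internal to a copy, together with its dihedral angle, is inherited; this yields the three sub-cases of the first bullet (according to which of $F,G$ survived as a single facet and which got doubled) simply by transporting each edge $(F,G;k)$ of $P_n$ by $\id$ and by $r_{F_n}$. The genuinely new phenomenon is the angle-doubling along $F_n$: if a type-$i$ facet $F$ meets the type-$j$ facet $F_n$ at angle $\tfrac{\pi}{2k}$ and does not merge, then the ridge $F\cap F_n$ becomes interior to $P_{n+1}$ and, going around it, the wedge of angle $\tfrac{\pi}{2k}$ in $P_n$ is glued to its mirror image, so that $F$ and $r_{F_n}(F)$ meet at angle $2\cdot\tfrac{\pi}{2k}=\tfrac{\pi}{k}$, where $2k$ is the label between $i$ and $j$ in $D$; this is the last bullet. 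The most delicate point, and the one I expect to be the main obstacle, is to show that these are the only new edges, i.e. that no two \emph{distinct} type-$3$ (or type-$7$) facets lying in opposite copies become adjacent. For this I would argue by codimension: facets $F\subseteq P_n$ and $r_{F_n}(G)\subseteq r_{F_n}(P_n)$ can only meet inside $F_n$, where $r_{F_n}(G)\cap F_n=G\cap F_n$, so $F\cap r_{F_n}(G)\subseteq F\cap G\cap F_n$ has codimension at least $3$ and hence is never a corner.

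For \ref{item:I3} I would use the already-noted fact that $L_{n+1}$ is the double of $L_n$ along $F_n$, together with the bijection between non-compact facets of $P_n$ and facets of $L_n$. Since $L_n$ is a right parallelepiped by Remark \ref{cubo}, a facet $G\neq F_n$ of $L_n$ meets $F_n$, if at all, at $\tfrac{\pi}{2}$; thus if the label $F_n$ is absent from the picture of $G$ the facet stays split into $G$ and $r_{F_n}(G)$, while if it is present the two halves merge into a single rectangle, obtained by doubling the picture of $G$ along the edge labelled $F_n$. In either case the labels, which record the compact and non-compact facets of $P_{n+1}$ touching each tile, are transported by $r_{F_n}$ on the reflected half, and a label $K$ must be rewritten as $r_{F_n}(K)$ exactly when $r_{F_n}(K)$ is a genuinely new facet of $P_{n+1}$, that is, when $K$ did not merge; this is the bookkeeping described in the statement and illustrated in Figures \ref{f3} and \ref{f4}. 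The verification here is routine once \ref{item:I1} is in hand, so the crux of the whole proposition remains the exclusion of spurious adjacencies in \ref{item:I2}.
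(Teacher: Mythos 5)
Your proposal is correct and follows essentially the same route as the paper: the trichotomy for \ref{item:I1} via Remark \ref{lista}, Remark \ref{cubo} and Proposition \ref{angolo}, the inherited edges plus the angle-doubling $\frac{\pi}{2k}\mapsto\frac{\pi}{k}$ for \ref{item:I2}, and the doubling of $L_n$ along $F_n$ for \ref{item:I3}. Your codimension argument ruling out spurious adjacencies between distinct facets in opposite copies (via $F\cap r_{F_n}(G)\subseteq F\cap G\cap F_n$) is a welcome explicit justification of a step the paper dispatches with ``by construction of $P_{n+1}$, there is no other edge to be added.''
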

\begin{proof}
We divide the proof in the same cases of the statement.
    \begin{enumerate}
        \item[\ref{item:I1}] 
        \begin{itemize}
            \item The label $G$ is  in the picture of $F_n$ if and only if the facet $G$ meets $F_n$ in $P_n$.
            \begin{itemize}
                \item If $G$ is of the same type of $F_n$, then the two facets are both non-compact, hence by Remark \ref{cubo} the dihedral angle between them is $\frac{\pi}{2}$. Hence $G\cup r_{F_n}(G)$ is a facet of $P_{n+1}$, that we call $G$. Hence we add $G$ to the list.
                \item If $G$ and $F_n$ are of type $i$ and $j$, respectively, with $i\ne j$, then by Proposition \ref{angolo} the dihedral angle between the two facets is the same dihedral angle between the facets \textit{\textbf{i}} and \textit{\textbf{j}} of $P_0$. There is no edge between the vertices $i$ and $j$ if and only if the dihedral angle between the facets $\textit{\textbf{i}}$ and $\textit{\textbf{j}}$ of $P_0$ is $\frac{\pi}{2}$. In this case $G\cup r_{F_n}(G)$ is a facet of $P_{n+1}$, that we call $G$. Hence we add $G$ to the list.
                \item Otherwise, if $F_n$ and $G$ are of type $i$ and $j$, respectively, with $i\ne j$, and there is an edge between the vertices $i$ and $j$, then by Proposition \ref{angolo} the dihedral angle between $G$ and $F_n$ is $\frac{\pi}{k}$, with $k\ne 2$; hence we have two facets of $P_{n+1}$ named $G$ and $r_{F_n}(G)$. Hence we add $G$ and $r_{F_n}(G)$ to the list.
            \end{itemize}
            \item Otherwise, if the label $G$ is not in the picture in \ref{item:I3} of $P_n$, then the facets $G$ and $F_n$ of $P_n$ do not meet. 
            Hence we have two facets of $P_{n+1}$ named $G$ and $r_{F_n}(G)$.
        \end{itemize}
        In this way we have listed all the facets of $P_{n+1}$. Indeed the union of all the listed facets is equal to the union of all facets of $P_n$ and of $r_{F_n}(P_n)$, minus the facet $F_n$.
        \item[\ref{item:I2}] The vertices of the two graphs are the facets of type $3$ and $7$ in the list of facets \ref{item:I1} of $P_{n+1}$ by definition.
        \begin{itemize}
            \item If in \ref{item:I2} of $P_n$ we have $(F,G;k)$, then it means that the dihedral angle between the facets $F$ and $G$ of $P_n$ is $\frac{\pi}{k}$. The proof of each of the three subcases of the thesis is obvious, once noted that $r_{F_n}(G)$ is not in \ref{item:I1} of $P_{n+1}$ if and only if $(G,\frac{\pi}{2})\in I_n(F_n)$, and this holds if and only if $G\cup r_{F_n}(G)$ is a facet of $P_{n+1}$ that we call $G$.
            \item Since the label $F$ is in the picture of $F_n$ in \ref{item:I3} of $P_n$, the facet $F$ meets $F_n$ in $P_n$. Since in the Coxeter diagram of $P_0$ the edge between  $i$ and $j$ has label $2k>2$, by Proposition \ref{angolo}, the dihedral angle between $F$ and $F_n$ is $\frac{\pi}{2k}$. Hence the dihedral angle between $F$ and $r_{F_n}(F)$ in $P_{n+1}$ is $\frac{\pi}{k}$. Hence we add the edge $(F,r_{F_n}(F);k)$ to the graph.
        \end{itemize}
        By construction of $P_{n+1}$, there is no other edge to be added to the two graphs.
        \item[\ref{item:I3}]
        \begin{itemize}
            \item If the picture of $G$ does not contain the label $F_n$, the facets $G$ and $F_n$ do not meet in $P_n$, hence in $P_{n+1}$ we have the facets $G$ and $r_{F_n}(G)$. Clearly $I_n(G)=I_{n+1}(G)$ and, given a tile $K$ of the tessellation of $G$, we also have $I_n(K)=I_{n+1}(K)$ (with a little abuse, since $K$ is not a facet). Moreover, $r_{F_n}(G)$ is a copy of $G$ in $r_{F_n}(P_n)$. Hence the picture of $r_{F_n}(G)$ is the same of the picture of $G$, but if the label $M$ is present in $G$ and there is a facet $r_{F_n}(M)$ in $P_{n+1}$, then in $r_{F_n}(G)$ we replace the label $M$ with $r_{F_n}(M)$.
            \item If $G$ has the label $F_n$ near one edge, it means that the facets $G$ and $F_n$ meet in $P_n$. Since they are both non-compact, by Remark \ref{cubo} the corresponding dihedral angle is $\frac{\pi}{2}$. Hence in $P_{n+1}$ there is a facet $G^{n+1}=G^n\cup r_{F_n}(G^n)$ (we are using the same notation of the proof of Proposition \ref{angolo}). The picture of the facet $G$ of $L_{n+1}$ is obtained doubling  the picture of $G$ of $L_n$ along the edge with label $F_n$.
        \end{itemize}
    \end{enumerate}
\end{proof}

\subsection{The construction}\label{costruzione}
We are now ready to build our sequence of polytopes.

Recall that we want $P_{n+1}=P_n\cup r_{F_n}(P_n)$, where $F_n$ is a non-compact and admissible facet of $P_n$. For every $n$ we are going to choose as $F_n$ a facet of different type from $3$ and $7$. Such a facet is non-compact and admissible in $P_0$ since the only compact facets are $\textbf{3}$ and $\textbf{7}$, and every facet of $P_0$ is admissible. For every $n\ge 1$ such a facet is non-compact and admissible by Corollary \ref{suit}.

\begin{defn}
We define the following polytopes: $P_1=P_0\cup r_5(P_0)$, $P_2=P_1\cup r_2(P_1)$, $P_3=P_2\cup r_{4_5}(P_2)$, $P_4=P_3\cup r_4(P_3)$, $P_5=P_4\cup r_1(P_4)$, $P_6=P_5\cup r_6(P_5)$, $P_7=P_6\cup r_{6_5}(P_6)$, $P_8=P_7\cup r_{1_2}(P_7)$.
\end{defn}

\begin{prop}
    The information \ref{item:I1}, \ref{item:I2}, \ref{item:I3} on $P_n$, for $n=0,\dots, 8$, are the following.

    The information \ref{item:I1} is:
    \begin{itemize}
        \item[$P_0:$] $1,2,3,4,5,6,7$;
        \item[$P_1:$]       $1,2,3,4, 4_5,6, 6_5,7$;
        \item[$P_2:$]       $1,1_2,3, 3_2, 4, 4_5, 6, 6_5, 7$;
        \item[$P_3:$]   $1,1_2,3, 3_{4_5}, 3_2,3_{4_5,2},   4, 6, 6_{4_5}, 6_5, 7$
        \item[$P_4:$]       $1,1_2,3, 3_4,3_{4_5}, 3_{4,4_5},   3_2,3_{4,2},3_{4_5,2},              3_{4,4_5,2},
            4,6,6_{4_5}, 6_5, 6_{4,5},7$;
        \item[$P_5:$]       $1_2,1_{1,2},3, 3_4,3_{4_5},        3_{4,4_5}, 3_2, 3_{1,2},            3_{4,2},3_{1,4,2}, 3_{4_5,2}, 3_{1,4_5,2}, 3_{4,4_5,2}, 3_{1,4,4_5,2},
        6,6_{4_5},
        \\ 6_5, 6_{4,5},7,7_1$;
        \item[$P_6:$] $1_2,1_{1,2},3, 3_4,3_{4_5}, 3_{6,4_5}, 3_{4,4_5}, 3_{6,4,4_5}, 3_2, 3_{1,2},
    3_{4,2},3_{1,4,2}, 3_{4_5,2}, 3_{6,4_5,2}, 3_{1,4_5,2}, 
    \\ 3_{6,1,4_5,2}, 3_{4,4_5,2}, 3_{6,4,4_5,2}, 3_{1,4,4_5,2}, 3_{6,1,4,4_5,2}, 6_{4_5}, 6_{6,4_5}, 6_5, 6_{4,5},7, 7_6, 7_1, 7_{6,1}$;
        \item[$P_7:$] $1_2,1_{1,2},3, 3_4, 3_{6_5,4},3_{4_5}, 3_{6,4_5}, 3_{4,4_5}, 3_{6_5,4,4_5},  3_{6,4,4_5}, 3_{6_5,6,4,4_5}, 3_2, 3_{1,2},
    3_{4,2}, 
    \\ 3_{6_5,4,2}, 
    3_{1,4,2}, 3_{6_5,1,4,2}, 3_{4_5,2}, 3_{6,4_5,2}, 3_{1,4_5,2}, 3_{6,1,4_5,2}, 3_{4,4_5,2}, 3_{6_5,4,4_5,2}, 3_{6,4,4_5,2},   3_{6_5,6,4,4_5,2},  
    3_{1,4,4_5,2}, 
    \\ 3_{6_5,1,4,4_5,2}, 3_{6,1,4,4_5,2}, 3_{6_5,6,1,4,4_5,2}, 6_{4_5}, 6_{6,4_5}, 6_{4,5}, 6_{6_5,4,5}, 7, 7_{6_5}, 7_6, 7_{6_5,6}, 7_1, 7_{6_5,1}, 7_{6,1}, 7_{6_5,6,1}$;
    \item[$P_8:$] $1_{1,2}, 1_{1_2,1,2}, 3, 3_{1_2}, 3_4, 3_{1_2,4}, 3_{6_5,4}, 3_{1_2,6_5,4}, 3_{4_5}, 3_{1_2,4_5}, 3_{6,4_5}, 3_{1_2,6,4_5}, 3_{4,4_5}, 
    \\ 3_{1_2,4,4_5}, 3_{6_5,4,4_5}, 3_{1_2,6_5,4,4_5},  3_{6,4,4_5}, 3_{1_2,6,4,4_5}, 
    3_{6_5,6,4,4_5}, 3_{1_2,6_5,6,4,4_5}, 3_2, 3_{1,2}, 3_{1_2,1,2}, 3_{4,2}, 3_{6_5,4,2}, 
    \\ 3_{1,4,2}, 3_{1_2,1,4,2}, 3_{6_5,1,4,2}, 3_{1_2,6_5,1,4,2}, 3_{4_5,2}, 3_{6,4_5,2},  3_{1,4_5,2}, 3_{1_2,1,4_5,2}, 3_{6,1,4_5,2}, 3_{1_2,6,1,4_5,2}, 3_{4,4_5,2}, 
    \\ 3_{6_5,4,4_5,2}, 3_{6,4,4_5,2},  3_{6_5,6,4,4_5,2}, 
    3_{1,4,4_5,2}, 3_{1_2,1,4,4_5,2}, 3_{6_5,1,4,4_5,2}, 3_{1_2,6_5,1,4,4_5,2}, 3_{6,1,4,4_5,2}, 
    \\ 3_{1_2,6,1,4,4_5,2}, 3_{6_5,6,1,4,4_5,2}, 3_{1_2,6_5,6,1,4,4_5,2}, 6_{4_5}, 6_{6,4_5}, 6_{4,5}, 6_{6_5,4,5},
    7, 7_{1_2}, 7_{6_5}, 7_{1_2,6_5}, 7_6, 7_{1_2,6}, 7_{6_5,6}, 
    \\ 7_{1_2,6_5,6}, 7_1, 7_{1_2,1}, 7_{6_5,1}, 7_{1_2,6_5,1}, 7_{6,1}, 7_{1_2,6,1}, 7_{6_5,6,1}, 7_{1_2,6_5,6,1}$.
    \end{itemize}
    
    The information \ref{item:I2} is in Tables \ref{t0}$,\dots,$ \ref{t8}.

    The information \ref{item:I3} is in Figure \ref{f2} and in Figures \ref{f7}$,\dots,$ \ref{f13.4}.
\end{prop}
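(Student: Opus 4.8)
The plan is to prove the proposition by induction on $n$, the inductive step being nothing more than a disciplined application of the algorithm of Proposition \ref{procedura}. For the base cases $n=0,1$: for $P_0$ the list \ref{item:I1} is $1,\dots,7$ by definition, the picture \ref{item:I3} is the top of Figure \ref{f2} (read off from the Coxeter diagram of $\lk_0$ supplied by Proposition \ref{riolo}), and each of the two graphs in \ref{item:I2} is a single isolated vertex, namely $\textbf{3}$ and $\textbf{7}$, these being the only compact facets. For $P_1=P_0\cup r_5(P_0)$ the list \ref{item:I1} was already computed via Remark \ref{lista} and the picture \ref{item:I3} is the bottom of Figure \ref{f2}; the graphs \ref{item:I2} are still the two isolated vertices $\textbf{3},\textbf{7}$, because node $5$ of $D$ is joined only to nodes $4$ and $6$, neither of which is compact, so the doubling along $\textbf{5}$ creates no adjacency between compact facets.

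For the inductive step, assume the data for $P_n$ ($n\ge1$) is as claimed. Each chosen facet $F_n\in\{\textbf{2},\textbf{4}_5,\textbf{4},\textbf{1},\textbf{6},\textbf{6}_5,\textbf{1}_2\}$ is of type different from $3$ and $7$, hence non-compact and admissible by Corollary \ref{suit}, so $P_{n+1}$ is a Coxeter polytope satisfying \ref{item:a} and \ref{item:b}, and Proposition \ref{procedura} applies. I would then run its three parts in order. For \ref{item:I1} I scan the facets of $P_n$ and, reading the picture of $F_n$ in \ref{item:I3} together with $D$, record each facet $G\ne F_n$ either as a single facet $G$ (exactly when $G$ meets $F_n$ at angle $\tfrac{\pi}{2}$, i.e.\ same type, or distinct types with unjoined nodes of $D$) or as the pair $G,r_{F_n}(G)$. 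For \ref{item:I3} I copy-and-mirror the pictures of the facets not meeting $F_n$ and double along the $F_n$-edge the pictures of those meeting it, applying the prescribed relabelling $K\mapsto r_{F_n}(K)$. For \ref{item:I2} I carry over the old edges as in the first bullet of part \ref{item:I2} and add the new edges created by the doubling.

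The only substantive point, and the main obstacle, is the propagation of \ref{item:I2}. Transporting old edges is routine, but each compact facet $F$ meeting $F_n$ and getting doubled produces a new edge $(F,r_{F_n}(F);k)$ precisely when the nodes of $D$ for the types of $F$ and $F_n$ carry an even label $2k$. In $D$ the compact node $3$ is joined to $4$ (label $6$) and to $2$ (label $4$), and the compact node $7$ to $6$ (label $6$) and to $1$ (label $4$); hence new label-$3$ edges among type-$3$ facets appear exactly at the two doublings along type-$4$ facets ($F_2=\textbf{4}_5$, $F_3=\textbf{4}$), new label-$2$ edges among type-$3$ facets at the doubling along $F_1=\textbf{2}$, new label-$3$ edges among type-$7$ facets at the two doublings along type-$6$ facets ($F_5=\textbf{6}$, $F_6=\textbf{6}_5$), and new label-$2$ edges among type-$7$ facets at the doublings along type-$1$ facets ($F_4=\textbf{1}$, $F_7=\textbf{1}_2$). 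Iterating the procedure seven times from $P_1$ yields the lists \ref{item:I1}, the tables \ref{t0}--\ref{t8} and the figures \ref{f2},\ref{f7}--\ref{f13.4} of the statement.

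Since everything is deterministic once Proposition \ref{procedura} is in place, the genuine difficulty is bookkeeping rather than mathematics: the facet count roughly doubles at each compact-facet splitting and reaches many dozens by $P_8$, so one must propagate the subscript notation $r_{F_n}(\cdot)$ and the two adjacency graphs without error, since a single slip cascades through all later steps. A useful internal check at each stage is that the listed facets must be exactly those of $P_n$ together with those of $r_{F_n}(P_n)$, minus $F_n$, which fixes the count and exposes any mislabelling.
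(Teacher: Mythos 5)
Your proposal is correct and follows essentially the same route as the paper: establish the data for $P_0$ and $P_1$ directly, then propagate it to $P_{n+1}$ by mechanically applying Proposition \ref{procedura}, with the only delicate point being the creation of new edges $(F,r_{F_n}(F);k)$ in the adjacency graphs, which you correctly locate at the doublings along facets of types $4$, $2$, $6$ and $1$ via the labels of the Coxeter diagram. The paper differs only in that it carries out the $P_1\to P_2$ step in full as a worked example and leaves the remaining iterations to the reader.
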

\begin{proof}
    The information on $P_0$ can be easily recovered from the definition of $P_0$.
    We have shown in Section \ref{P7eP8} the information on $P_1$. We now recover the information on $P_{n+1}$, for $n\ge 1$, from the information on $P_n$ and the Coxeter diagram on $P_0$, using Proposition \ref{procedura}. 
    
    We now describe in detail how to recover the information on $P_2$. The reader is invited to check the information on the other polytopes in the same way.

    The information on $P_1$ are the following.

\begin{enumerate}
    \item[\ref{item:I1}] Facets of $P_1$: $1,2,3,4, 4_5,6, 6_5,7$.
    \item[\ref{item:I2}] Adjacency matrices of facets of type $3$ and $7$ are in Table \ref{t1}. They are clearly two $1 \times 1$ matrices, both with the entry $1$. (Recall that on the left side of the matrices we put the names of the facets.)

\item[\ref{item:I3}] The pictures of the facets of $\lk_1$ are in Figure \ref{f2}.
\end{enumerate}

We now use Proposition \ref{procedura} to recover the information on $P_2$.

\begin{enumerate}
    \item[\ref{item:I1}] The label $1$ is not in the picture of the facet $2$ in \ref{item:I3} of $P_1$, hence we add $1$ and $1_2$ to the list of facets \ref{item:I1} of $P_2$.

    The label $3$ is in the picture of the facet $2$ in \ref{item:I3} of $P_1$. Moreover the facets $\textbf{2}$ and $\textbf{3}$ are of different type and there is an edge between the corresponding vertices in the Coxeter diagram of $P_0$ (see the diagram in Section \ref{P0}). Hence we add $3$ and $3_2$ to the list of facets of $P_2$.

    The label $4$ is in the picture of the facet $2$ in \ref{item:I3} of $P_1$. Moreover the facets $\textbf{2}$ and $\textbf{3}$ are of different type and there is not an edge between the corresponding vertices in the Coxeter diagram of $P_0$. Hence we add $4$ to the list of facets of $P_2$. The same holds for the remaining facets ($\textbf{4}_5,\textbf{6},\textbf{6}_5,\textbf{7}$) distinct to $\textbf{2}$ of $P_1$.

    We obtained that the list of facets of $P_2$ is $1,1_2,3,3_2,4,4_5,6,6_5,7$.

    \item[\ref{item:I2}] The vertices of the two adjacency graphs of $P_2$ are the facets of type $3$ or $7$ in \ref{item:I1} of $P_2$: $3, 3_2$ and $7$. The two graphs in \ref{item:I2} of $P_1$ have no edge. We have $3$ and $3_2$ in \ref{item:I1} of $P_{2}$, the label $3$ is in the picture of the facet $\textbf{2}$ in \ref{item:I3} of $P_1$, the facet $\textbf{3}$ is of type $3$, the facet $\textbf{2}$ is of type $2$ and the label of the edge between $3$ and $2$ in the Coxeter diagram of $P_0$ is $4$. Hence we add an edge with label $2$ between the vertices $3$ and $3_2$.

    We obtained that the two adjacency matrices of $P_2$ are the ones in Table \ref{t2}.

\item[\ref{item:I3}] The picture in \ref{item:I3} of $P_1$ of the facet $\textbf{1}$ of $L_1$ does not contain the label $2$. Hence we add the the first two pictures of Figure \ref{f7}.
The pictures in \ref{item:I3} of $P_1$ of the facets $\textbf{4},\textbf{4}_5,\textbf{6},\textbf{6}_5$ of $L_1$ contain the label $2$. Hence we add the latter four pictures of Figure \ref{f7}.
\end{enumerate}
\end{proof}

\section{The reflectofolds}\label{reflect}

In this section we glue the facets of the polytopes $P_7$ and $P_8$ in order to obtain some $1$-cusped developable reflectofolds. In Section \ref{ref1} we perform the gluing. Then, in Section \ref{ref2} we study the facets and the corners of the constructed spaces, in order to show, in Section \ref{ref3}, that they are $1$-cusped developable reflectofolds.

\subsection{Defining the reflectofolds}\label{ref1}
The link $L_7$ of the ideal vertex of $P_7$ is a right parallelepiped. If we glue $L_7$ as described in Figure \ref{f14}, in each of the three cases we obtain a flat $3$-manifold: the $3$-torus, the $\frac{1}{2}$-twist manifold  and the $\frac{1}{4}$-twist manifold, respectively \cite[Figure 12.2]{MA}.

\begin{figure}
\includegraphics{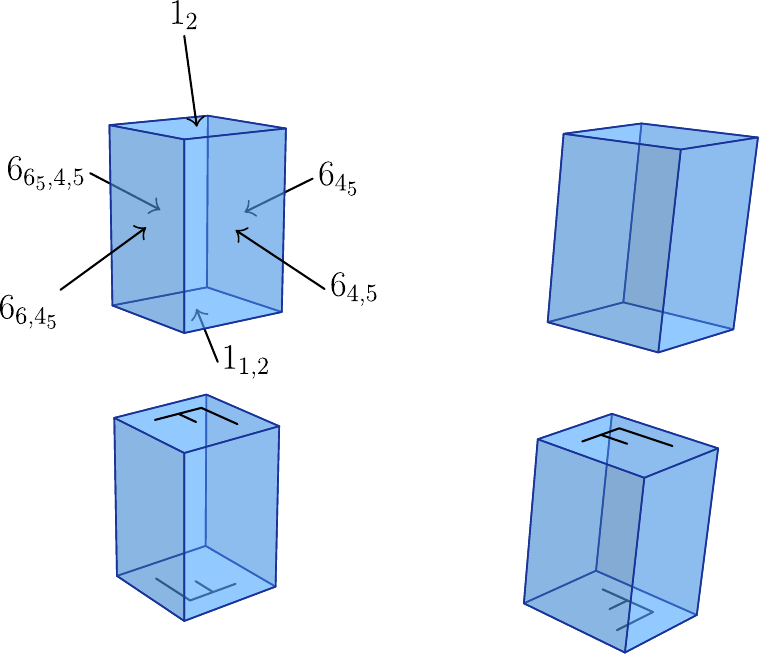}
    \caption{The link $L_7$ (top-left), the $3$-torus (top-right), the $\frac{1}{2}$-twist manifold (bottom-left), the $\frac{1}{4}$-twist manifold (bottom-right). In the last three pictures, if two opposite facets do not have  a letter inside, we glue them with a translation, otherwise we glue them as indicated with the letters.}\label{f14}
\end{figure}

We now show that, for each of the three manifolds, we can glue $P_7$ using isometries between the facets in a way that this induces a gluing of $L_7$ as described.

Let $R_T$ be the space obtained from $P_7$ by gluing the facet $6_{4_5}$ with $6_{6,4_5}$ using the isometry $\restr{r_6}{6_{4_5}}$, the facet $6_{4,5}$ with $6_{6_5,4,5}$ using the isometry $\restr{r_{6_5}}{6_{4,5}}$, and the facet $1_2$ with $1_{1,2}$ using the isometry $\restr{r_1}{1_2}$. We have indeed $r_6(6_{4_5})  = 6_{6,4_5}$, $r_{6_5}(6_{4,5}) = 6_{6_5,4,5}$ and $r_1(1_2) = 1_{1,2}$. This can be seen from Figure \ref{f15} for $L_7$, and therefore it also holds for $P_7$ since each map is a reflection through a copy of a facet of $P_0$.

In the next cases the argument is analog to the one of $R_T$.

\begin{defn}
The space $R_T$ is obtained from $P_7$ by gluing the facets via the following isometries:

$$
\restr{r_6}{6_{4_5}}\colon 6_{4_5}\rightarrow 6_{6,4_5}, \quad \restr{r_{6_5}}{6_{4,5}}\colon 6_{4,5}\rightarrow 6_{6_5,4,5}, \quad \restr{r_1}{1_2}\colon 1_2\rightarrow 1_{1,2}.
$$  

Let $R_{\frac{1}{2}}$ be the space obtained from $P_7$ by gluing the facets via the following isometries:
$$
\restr{r_6}{6_{4_5}}\colon 6_{4_5}\rightarrow 6_{6,4_5}\quad \restr{r_{6_5}}{6_{4,5}}\colon 6_{4,5}\rightarrow 6_{6_5,4,5}, \quad \restr{r_1\circ r_6 \circ r_{6_5}}{1_2}\colon 1_2\rightarrow 1_{1,2}.
$$

Let $R_{\frac{1}{4}}$ be the space obtained from $P_7$ by gluing the facets via the following isometries:
$$
\restr{r_6}{6_{4_5}}\colon 6_{4_5}\rightarrow 6_{6,4_5}\quad \restr{r_{6_5}}{6_{4,5}}\colon 6_{4,5}\rightarrow 6_{6_5,4,5}, \quad \restr{r_1\circ r_6 \circ r_5}{1_2}\colon 1_2\rightarrow 1_{1,2}.
$$
\end{defn}

\begin{figure}
\includegraphics{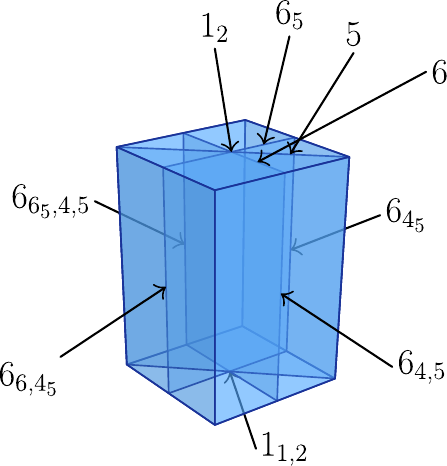}
    \caption{The link $L_7$ and the fixed planes of the reflections used to define $R_T,R_{\frac{1}{2}},R_{\frac{1}{4}}$.}\label{f15}
\end{figure}

We see from Figure \ref{f15} that each gluing induces a gluing of $L_7$ as in Figure \ref{f14}, thus producing the $3$-torus, the $\frac{1}{2}$-twist manifold and the $\frac{1}{4}$-twist manifold, respectively.

The link $L_8$ of the ideal vertex of $P_8$ is a right parallelepiped. If we glue $L_8$ as described in Figure \ref{f16}, we obtain a flat $3$-manifold, the Hantzsche-Wendt manifold \cite[Figure 12.2]{MA}.
\begin{figure}
\includegraphics{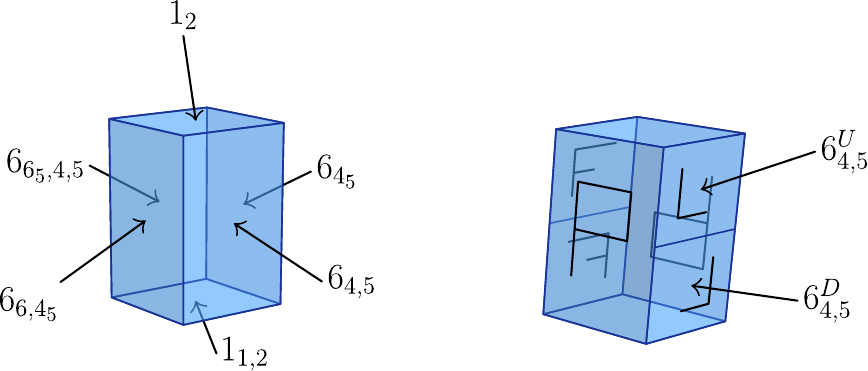}
    \caption{The link $L_8$ (left) and the Hantzsche-Wendt manifold (right), with the same notation of Figure \ref{f14}. Moreover, we see how the facet $6_{4,5}$ is divided in the two parts $6_{4,5}^U$ and $6_{4,5}^D$. Similarly the facet $6_{6_5,4,5}$ is divided in the two parts $6_{6_5,4,5}^U$ and $6_{6_5,4,5}^D$}\label{f16}
\end{figure}
We now show that we can glue $P_8$ using isometries between the facets in a way that this induces the gluing of $L_8$ described in Figure \ref{f16}.

We notice that the facet $6_{4,5}$ is divided in two parts, $6_{4,5}^U$ and $6_{4,5}^D$, as in Figure \ref{f16}. Similarly, we define $6_{6_5,4,5}^U$ and $6_{6_5,4,5}^D$.

\begin{defn}
    Let $R_{HW}$ be the space obtained from $P_8$ by gluing the facets via the following isometries:
    $$
        \restr{r_{1_2}}{1_{1,2}}\colon 1_{1,2}\rightarrow 1_{1_2,1,2}, \quad \restr{r_{1_2} \circ r_{6_5} \circ r_{6}}{6_{4_5}}\colon 6_{4_5}\rightarrow 6_{6,4_5},
    $$
    $$
        \restr{r_{6} \circ (r_1 \circ r_2)^2}{6_{4,5}^U}\colon 6_{4,5}^U\rightarrow 6_{4,5}^D, \quad
        \restr{r_{6} \circ (r_1 \circ r_2)^2}{6_{6_5,4,5}^U}\colon 6_{6_5,4,5}^U\rightarrow 6_{6_5,4,5}^D
    $$
\end{defn}


\begin{figure}
\includegraphics{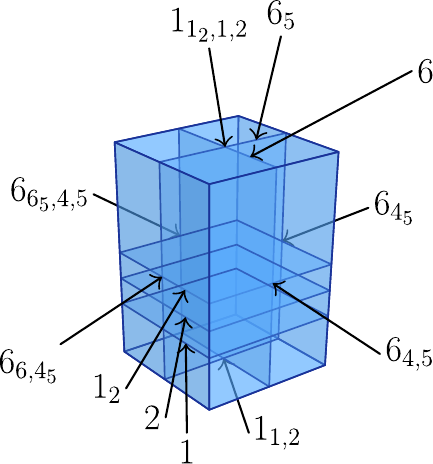}
    \caption{The link $L_8$ with the fixed planes of the reflections used to define $R_{HW}$.}\label{f17}
\end{figure}

We see from Figure \ref{f17} that this gluing induces the gluing of $L_8$ described in Figure \ref{f16}.

\begin{rem}
\label{identity}
    Let $f$ be one of the gluing maps used above for the polytope $P_7$. Then $f$ is the restriction of a symmetry of $P_7$ that preserves its tessellation in copies of $P_0$. Indeed, we see from Figure \ref{f15} that $f(L_7)=L_7$, hence it easily follows that $f(P_7)=P_7$. Moreover, $f$ is a composition of reflections along copies of facets of $P_0$. Hence it is a symmetry of $P_7$ and preserves the tessellation.

    If $f$ is a gluing map for the polytope $P_8$, the statement is slightly different. Indeed, if we consider the natural tessellation of $\matR^3$ in copies of $L_8$, then $f$ is induced by a symmetry of $\matR^3$ that preserves its tesselletion in copies of $L_0$. The argument is analog to the previous case.
\end{rem}

Let $R$ be any of $R_T$, $R_{\frac{1}{2}}$, $R_{\frac{1}{4}}, R_{HW}$. The purpose of the following sections will be to prove this theorem.

\begin{thm}
\label{Rbello}
    The space $R$ is an orientable, finite-volume, $1$-cusped, developable reflectofold with compact, non-empty boundary. Moreover, the cusp of $R_T, R_{\frac{1}{2}},R_{\frac{1}{4}}, R_{HW}$ has section the $3$-torus, the $\frac{1}{2}$-twist manifold, the $\frac{1}{4}$-twist manifold, the Hantzsche-Wendt manifold, respectively.
\end{thm}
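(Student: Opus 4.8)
The plan is to verify the seven asserted properties of $R$ one at a time, using throughout that $R$ is simply $P_7$ (respectively $P_8$, in the Hantzsche--Wendt case) with all of its non-compact facets identified in pairs, so that most features are inherited from the polytope. I would first settle the boundary and the volume: a glance at the facet list \ref{item:I1} shows that the only compact facets of $P_7$ are those of type $3$ and $7$, while every non-compact facet has type $1$ or $6$, and the chosen gluing maps identify all of these in pairs (for $P_8$ two type-$6$ facets are moreover cut in half and self-glued). Hence $\partial R$ is exactly the union of the type-$3$ and type-$7$ facets, which is non-empty and compact, and no identification touches the interior, so $\mathrm{Vol}(R)=\mathrm{Vol}(P_7)<\infty$ because $P_7$ is a finite-volume iterated double of $P_0$.

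For the cusp I would use that $P_7$ has the single ideal vertex $V$ with link $L_7$ a right parallelepiped (Remark~\ref{cubo}). The chosen face identifications are, by construction, restrictions of the reflections whose fixed walls are drawn in Figure~\ref{f15}, and these induce precisely the gluing of $L_7$ recorded in Figure~\ref{f14}; since that gluing produces a \emph{connected} closed flat $3$-manifold (the $3$-torus, the $\frac{1}{2}$-twist manifold, or the $\frac{1}{4}$-twist manifold), $R$ has exactly one cusp and its section is the stated $E_i$. The Hantzsche--Wendt case is identical, reading Figures~\ref{f16}--\ref{f17} and $L_8$ in place of $L_7$.

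That $R$ is a reflectofold, and that it is orientable, I would both extract from Remark~\ref{identity}. Since each gluing isometry is the restriction of a symmetry of $P_7$ preserving its tessellation into copies of $P_0$ (for $P_8$, of a symmetry of the cusp tessellation), a neighbourhood of any glued facet is isometric to two copies of the Coxeter polytope glued along a facet, i.e.\ to a piece of the $\matH^4$-tessellation that these symmetries generate; consequently every dihedral angle occurring along a corner of $R$ is one already present in $P_7$, hence an integral submultiple of $\pi$, and $R$ is locally a hyperbolic Coxeter polytope. For orientability I would fix an orientation on $P_7$ and check that each gluing map reverses the boundary orientation it induces on the two glued facets: each gluing isometry is a composition of an \emph{odd} number of reflections, hence orientation-reversing on $\matH^4$, and since it respects the polytope it carries outward normals to outward normals; the product of these two signs is $-1$, so the orientation descends to $R$.

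The heart of the argument, and the step I expect to be the main obstacle, is developability, namely \ref{EF} and \ref{AC}. After the identifications the facets of the reflectofold are the (unions of) type-$3$ and type-$7$ facets, and its corners are the codimension-$2$ faces along which two of these meet; all the information needed to analyse them is exactly the adjacency data \ref{item:I2} tabulated for $P_7$ and $P_8$, together with Proposition~\ref{angolo} and Corollary~\ref{suit} for the remaining angles. For \ref{EF} I would check corner by corner that no type-$3$ or type-$7$ facet is folded onto itself and that two facets sharing a corner are never identified, so that each corner is genuinely the intersection of \emph{two distinct} facets; this is precisely what fails for the naive gluings of Remark~\ref{motivazione}, and is the reason for iterating the doubling all the way up to $P_7$ and $P_8$. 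For \ref{AC} I would verify that whenever two such facets share several corners the corresponding entries of the adjacency matrices coincide, so that the dihedral angle is constant along $F\cap F'$. Once \ref{EF} and \ref{AC} hold, $R$ is an orientable, finite-volume, $1$-cusped developable reflectofold with compact non-empty boundary, and Proposition~\ref{tassell} together with Corollary~\ref{maincor} then yields Theorem~\ref{thm:main}.
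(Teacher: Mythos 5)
Your decomposition of the proof --- boundary and volume, cusp section, local Coxeter structure, orientability, developability --- is essentially the paper's, which splits the theorem into Propositions \ref{loc}, \ref{cusp} and \ref{proprieta}, resting on Lemmas \ref{tipi}, \ref{corner} and \ref{facce}. Two of your justifications, however, are imprecise in ways that matter. First, the dihedral angle at a corner of $R$ created by a gluing is not ``one already present in $P_7$'': when a type-$3$ or type-$7$ facet meets a glued wall, the corresponding angle is \emph{doubled} in $R$ (e.g.\ the $\frac{\pi}{6}$ between types $7$ and $6$ becomes $\frac{\pi}{3}$). The correct reason the result is still an integral submultiple of $\pi$ is that these angles are even submultiples of $\pi$ (Proposition \ref{angolo} plus admissibility of the glued facets) --- this is exactly what hypothesis \ref{item:b} is for, and your phrasing skips it. Second, the adjacency data \ref{item:I2} of $P_7$ or $P_8$ does not by itself determine the facets and corners of $R$: the type-$3$ facets of $R$ are unions of several type-$3$ facets of $P$, and new type-$(7,7)$ corners arise from type-$(7,1)$ and type-$(7,6)$ ridges across the glued walls. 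Computing these identifications requires the link pictures \ref{item:I3} together with the explicit gluing maps; this computation is the paper's Lemma \ref{facce} and is the actual bulk of the proof. You correctly identify developability as the crux and describe the right check (no loops, consistent labels), but the check itself is the content, not a formality.

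Your orientability argument is the one genuinely different route: the paper simply observes that $R\cong E\times[0,1)$ with $E$ an orientable flat $3$-manifold, whereas you argue that each gluing map is an odd product of reflections, hence orientation-reversing on $\matH^4$, and reverses induced boundary orientations. This works for $R_T$, $R_{\frac{1}{2}}$, $R_{\frac{1}{4}}$, where by Remark \ref{identity} the gluing maps extend to symmetries of $P_7$, so ``outward normal to outward normal'' is automatic. For $R_{HW}$ the maps such as $r_6\circ(r_1\circ r_2)^2$ do \emph{not} preserve $P_8$ (Remark \ref{identity} only gives invariance of the cusp tessellation), so that step needs a separate justification there; the paper's topological argument avoids the issue entirely and is the cleaner choice.
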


The proof of Theorem \ref{thm:main} will immediately follow from Theorem \ref{Rbello} and Corollary \ref{maincor}.

\subsection{The facets and the corners.}\label{ref2}
The purpose of this section is to study the facets and the corners of $R$. This will help us to prove Theorem \ref{Rbello} in the next section.


Let $P$ be any of $P_7$ and $P_8$, and $p\colon P\rightarrow R$ denote the quotient map.

\begin{lemma}
\label{tipi}
    A facet of $R$ is:
    \begin{itemize}
        \item either the image through $p$ of a facet of type $7$,
        \item or the image through $p$ of a union of facets of type $3$.
    \end{itemize}
\end{lemma}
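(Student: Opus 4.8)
The plan is to follow the quotient map $p\colon P\to R$ facet by facet: first decide which facets of $P$ survive in $\partial R$, and then decide which surviving facets get \emph{merged} by $p$ into a single facet of $R$. First I would reduce the problem to the compact facets. If a facet $F$ of $P$ is used in the gluing, then its relative interior is identified with that of another facet, so each of its interior points acquires a neighbourhood in $R$ isometric to a ball of $\matH^4$; hence $F$ contributes no facet to $\partial R$. Reading off the lists \ref{item:I1} of $P_7$ and $P_8$, the only facet types occurring are $1,3,6,7$, and every facet of the non-compact types $1$ and $6$ takes part in the gluing; for $P_8$ this includes the facets $6_{4,5}$ and $6_{6_5,4,5}$, which are self-glued through their halves $6_{4,5}^U,6_{4,5}^D$ and $6_{6_5,4,5}^U,6_{6_5,4,5}^D$. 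Consequently only the compact facets, of types $3$ and $7$, can contribute to $\partial R$.

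Next I would analyse the merging. Two surviving facets are merged by $p$ only if they become coplanar, i.e.\ meet along a corner of dihedral angle $\pi$. A corner of $\partial R$ is either the image of a corner of $P$ lying between two surviving facets, or a \emph{gluing corner}. Corners of the first kind keep the dihedral angle they had in $P$, which is $\tfrac\pi k$ with $k\ge 2$, hence at most $\tfrac\pi2$ and never $\pi$: no merging occurs there. For the second kind, let $F$ be a glued facet with gluing map $g$, and let $A$ be a surviving facet meeting $F$ at a corner $C=A\cap F$ with dihedral angle $\theta$. By Remark~\ref{identity} the map $g$ is the restriction of a symmetry of $P$ preserving its tessellation into copies of $P_0$, so $g(A)$ is again a facet of $P$ of the same type as $A$, meeting $g(F)$ along $g(C)$ at the same angle $\theta$. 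Since $p$ identifies $C$ with $g(C)$ and makes $F$ interior, in $R$ the facets $A$ and $g(A)$ meet along the image of $C$ with dihedral angle $\theta+\theta=2\theta$; they are merged precisely when $2\theta=\pi$, that is $\theta=\tfrac\pi2$.

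It then remains to read off the relevant angles from Proposition~\ref{angolo} and the diagram $D$ of $P_0$. A facet of type $3$ meets the glued types $1$ and $6$ at angle $\tfrac\pi2$ (node $3$ is joined to neither $1$ nor $6$ in $D$), so across a gluing corner it merges with another type-$3$ facet; as $g$ preserves types, it can only merge with type-$3$ facets, and it never merges with a type-$7$ facet, since the two meet only at original corners of angle $\tfrac\pi2$. A facet of type $7$, on the other hand, meets type $1$ at $\tfrac\pi4$ and type $6$ at $\tfrac\pi6$ (the edges joining nodes $1,7$ and $6,7$ carry labels $4$ and $6$), so the doubled angles $\tfrac\pi2$ and $\tfrac\pi3$ are never $\pi$, and at original corners it meets type-$3$ and type-$7$ facets at angles at most $\tfrac\pi2$; thus a type-$7$ facet is merged with nothing and, being of a type untouched by $g$, is not identified with any other facet. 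Hence the image of each type-$7$ facet is a single facet of $R$, while every facet of $R$ meeting the image of a type-$3$ facet is a maximal union of type-$3$ facets, which is exactly the claimed dichotomy.

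The step I expect to be most delicate is not the bookkeeping but the local angle computation together with the input from Remark~\ref{identity}: one must be sure that each gluing map — including the composite maps such as $r_1\circ r_6\circ r_5$ and $r_1\circ r_6\circ r_{6_5}$ used for $R_{\frac{1}{4}},R_{\frac{1}{2}}$, and $r_6\circ (r_1\circ r_2)^2$ used for $R_{HW}$ — sends facets to facets of the same type, so that $g(A)$ is genuinely a type-$3$ (resp.\ type-$7$) facet, and that the subdivision of $6_{4,5}$ and $6_{6_5,4,5}$ in $P_8$ does not affect the computation, a type-$3$ facet still meeting these half-facets at $\tfrac\pi2$. The only borderline case, a facet glued to itself ($g(A)=A$), is harmless: then $A\perp F$ and $A$ simply passes through the image of $F$, remaining a single facet of its type.
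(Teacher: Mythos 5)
Your proof is correct and follows essentially the same route as the paper's: both reduce to the surviving compact facets of types $3$ and $7$, invoke Proposition \ref{angolo} and Remark \ref{identity} to ensure the gluing maps match facets of equal type, and exploit that $\textbf{3}$ is orthogonal to $\textbf{1}$ and $\textbf{6}$ in $P_0$ while $\textbf{7}$ is not, so that only type-$3$ facets merge across a glued facet. Your explicit statement of the merging criterion (doubled angle $2\theta=\pi$) and your remarks on the self-glued halves of $6_{4,5}$ in $P_8$ and the case $g(A)=A$ are slightly more detailed than the paper's write-up, but the argument is the same.
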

We call the first facets of $R$ \emph{of type} $7$ and the other facets \emph{of type} $3$.
\begin{proof}
    Since we glued all the facets of different type from $3$ and $7$, the union of the facets of $R$ is the image through $p$ of the union of the facets of $P$ of type $3$ and $7$.

    If in $P$ a facet of type $3$ and a facet along which we glue meet, they do so with a dihedral angle of $\frac{\pi}{2}$. This is true by Proposition \ref{angolo}, since we glue facets that are of type $1$ and $6$ and in $P_0$ the facets $\textbf{1}$ and $\textbf{6}$ are orthogonal to $\textbf{3}$.
    Let $A$ and $B$ be two facets of $P$ that are identified in $R$ via the gluing.
    By Remark \ref{identity}, if $F$ and $G$ are facets of $P$ of type $3$ or $7$ such that $p(F\cap A)=p(G\cap B)\ne \emptyset$, then $F$ and $G$ are of the same type.

    Let $S_A$ and $S_B$ be the sets of facets of $P$ of type $3$ that meet $A$ and $B$, respectively. Then given $F\in S_A$, there exists $G\in S_B$ such that $p(F\cap A)=p(G\cap B)$. Then $p(F)$ and $p(G)$ are contained in the same facet of $R$ (since we have already seen that the dihedral angle in $P$ between $F$ and $A$, and $G$ and $B$, is $\frac{\pi}{2}$).
    Since the image through $p$ of a facet is contained in a facet of $R$, we have shown that a facet of $R$ is the image through $p$ of a union  of facets of the same type: either $7$ or $3$. It thus only remains to show that in the type-$7$ case such a facet is the image of exactly one facet of $P$.


    If in $P$ a facet of type $7$ and a facet along which we glue meet, they do so with a dihedral angle different from $\frac{\pi}{2}$. Indeed this is true by Proposition \ref{angolo}, since we glue facets that are of type $1$ and $6$ and in $P_0$ the facets $\textbf{1}$ and $\textbf{6}$ are not orthogonal to $\textbf{7}$.

    Hence for every type-$7$ facet $M$ of $P$, we obtain that $p(M)$ is a facet of $R$.
\end{proof}

It will be easy to check that $R$ satisfies \ref{AC} and \ref{EF}  once we have found the corner graphs of type $3$ or $7$ of $R$.
\begin{defn}
\label{defcgraph}
    For $i=3,7$, the \emph{type-$i$ corner graph} $G_i$ of $R$ is the graph whose vertices are the type-$i$ facets of $R$ and between two vertices $A$ and $B$ there is an edge for every corner in $A\cap B$. Moreover, we put a label $k\in\matN$ on an edge if the dihedral angle associated to the corresponding corner is $\frac{\pi}{k}$. If the angle is not in the form $\frac{\pi}{k}$ (this will never be the case), then we put the underlined angle as a label. 
\end{defn}

By Lemma \ref{tipi}  we already know the vertices of the type-$7$ corner graph of $R$.

We call a ridge of $P$ \emph{of type $(i,j)$} if it is the intersection of a facet of type $i$ and a facet of type $j$.
We call a corner of $R$ \emph{of type $(i,j)$} if it is contained in the intersection of two facets, one of type $i$ and one of type $j$.

\begin{lemma}
\label{corner}
    A corner of $R$ is:
    \begin{itemize}
        \item either of type $(3,3)$, and in this case it is the image through $p$ of a union of some type-$(3,3)$ ridges;
        \item either of type $(7,7)$, and in this case it is:
        \begin{itemize}
            \item either the image through $p$ of a type-$(7,7)$ ridge of $P$;
            \item or the image through $p$ of a type-$(7,i)$ ridge of $P$, with $i=1,6$;
        \end{itemize}
        
        \item or of type $(3,7)$, and in this case it is the image through $p$ of a type-$(3,7)$ ridge of $P$.
    \end{itemize}
\end{lemma}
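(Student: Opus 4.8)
The plan is to classify corners of $R$ by pulling them back along $p$ to ridges of $P$ and tracking the effect of the gluing on dihedral angles. By Lemma \ref{tipi} every facet of $R$ is of type $3$ or $7$, so every corner lies in the intersection of two such facets and is automatically of type $(3,3)$, $(3,7)$, or $(7,7)$. Taking a point $x$ in the relative interior of a corner, its preimage $p^{-1}(x)$ is a finite set of points lying on codimension-$\ge 2$ faces of $P$; for generic $x$ these points lie in the relative interiors of ridges, so it suffices to determine, for each ridge $\rho = F \cap K$ of $P$, whether $p(\rho)$ is a corner and of which type.

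First I would record the angle of each ridge type. The facets of $P=P_7,P_8$ are of types $1,3,6,7$, and the only glued facets are of types $1$ and $6$. By Proposition \ref{angolo}, two facets of distinct types meet with the same angle as the corresponding facets of $P_0$, which is read off from the diagram $D$: type $3$ is orthogonal to each of types $1,6,7$, while type $7$ meets type $1$ with angle $\frac{\pi}{4}$ and type $6$ with angle $\frac{\pi}{6}$. By Remark \ref{cubo} the non-compact facets (types $1,6$) meet one another at $\frac{\pi}{2}$.

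Next I would run the standard link (angle-doubling) computation at a ridge $\rho = F\cap K$, using Remark \ref{identity} to guarantee that a gluing map sends type-$3$ (resp.\ type-$7$) facets meeting a glued facet to type-$3$ (resp.\ type-$7$) facets meeting its partner. If neither $F$ nor $K$ is glued — that is, $\rho$ is of type $(3,3)$, $(3,7)$, or $(7,7)$ — the wedge is unaffected and $p(\rho)$ is a corner of the same type. If exactly one of them, say $K$, is glued to a partner $K'$, then $F\cap K$ is identified with a ridge $F'\cap K'$ and the two wedges combine, so the dihedral angle at $p(\rho)$ is the sum of the two equal angles: for $\rho$ of type $(3,1)$ or $(3,6)$ it becomes $\frac{\pi}{2}+\frac{\pi}{2}=\pi$, so $p(\rho)$ lies in the \emph{interior} of a type-$3$ facet of $R$ and is not a corner; for $\rho$ of type $(7,1)$ (resp.\ type $(7,6)$) it becomes $\frac{\pi}{2}$ (resp.\ $\frac{\pi}{3}$), producing a $(7,7)$ corner.

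The remaining and most delicate case is when both $F$ and $K$ are glued, i.e.\ $\rho$ is of type $(1,1)$, $(1,6)$, or $(6,6)$. Here I would argue that such ridges are non-compact and, under the bijection between non-compact facets of $P_n$ and facets of the parallelepiped $L_n$, sit over \emph{edges} of $L_n$; since the chosen gluings realize $L_7$ (resp.\ $L_8$) as the closed flat manifold $E_i$, these edges are carried into the interior of the cusp cross-section, so $p(\rho)$ meets no facet of $\partial R$ and is not a corner. Assembling the cases yields exactly the three possibilities of the statement: the $(3,3)$ corners appear as images of \emph{unions} of type-$(3,3)$ ridges, since a type-$3$ facet of $R$ is the image of a union of type-$3$ facets of $P$ glued along the (now interior) images of type-$(3,1)$ and type-$(3,6)$ ridges, whereas the $(3,7)$ and $(7,7)$ corners are images of single ridges because the type-$7$ facets of $R$ are images of single facets of $P$. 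I expect the main obstacle to be precisely this last case: making rigorous that the glued type-$1$/type-$6$ ridges disappear into the cusp requires controlling the combinatorics of $L_n$ and the specific gluing maps, rather than a purely local angle count.
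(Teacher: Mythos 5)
Your proposal is correct and follows essentially the same route as the paper: both rest on Lemma \ref{tipi} to reduce to the three corner types and on Proposition \ref{angolo} (together with Remark \ref{identity}) to see that type-$(3,i)$ ridges, $i=1,6$, have their right angle doubled to $\pi$ and hence land in the interior of a type-$3$ facet, while type-$(7,i)$ ridges become type-$(7,7)$ corners. The only organizational difference is that you argue ridge-by-ridge and therefore must also dispose of the type-$(1,1)$, $(1,6)$ and $(6,6)$ ridges, a case the paper sidesteps by starting from the corners (which by definition lie on two type-$3$ or type-$7$ facets of $R$) and descending to their preimages.
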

\begin{proof}
    Since the facets of $R$ are of type $3$ or $7$, there are three kinds of corners in $R$: type $(3,3)$, $(7,7)$, and $(3,7)$.

    By Proposition \ref{angolo}, if a facet of type $3$ and a facet of type $1$ or $6$ of $P$ meet, the dihedral angle between them is $\frac{\pi}{2}$. Hence the image through $p$ of a type-$(3,i)$ ridge of $P$, with $i=1,6$, is contained in the relative interior of a type-$3$ facet. Hence the union of the type-$(3,3)$ corners of $R$ is the image through $p$ of the union of the type-$(3,3)$ ridges of $P$.

    The image through $p$ of a type-$(3,3)$ ridge is contained in a corner, hence every type-$(3,3)$ corner is the image through $p$ of the union of some type-$(3,3)$ ridges.
    

    Since by Lemma \ref{tipi} the image through $p$ of a facet of type $7$ of $P$ is a facet of type $7$ of $R$, the image of a type-$(7,7)$, or type-$(3,7)$, ridge is a corner of $R$. Moreover,  the image of a type-$(7,i)$ ridge, with $i=1,6$, is a type-$(7,7)$ corner.
\end{proof}

Let $3_X,3_Y$ be two type-$3$ facets of $P$. Let us define the following equivalence relation: we set $3_X \sim 3_Y$ if $p(3_X)$ and $p(3_Y)$ are contained in the same facet of $R$.
Moreover, the type-$3$ facets of $R$ are in natural bijection with the equivalence classes. Indeed,  $\overline{3_X}=\{3_{X_1},\dots,3_{X_k}\}$ is an equivalence class if and only if $\bigcup_{i=1}^k p(3_{X_i})$ is a facet of $R$.

Since by Lemma \ref{tipi} the map $p$ gives a correspondence between the type-$7$ facets of $P$ and the type-$7$ facets of $R$, we will call the type-$7$ facets of $R$ with the same name of the ones of $P$. Instead we will call the type-$3$ facets of $R$ with the same name of the equivalence classes.

\begin{lemma}
\label{facce}
    The equivalence classes are:
    \begin{itemize}
        \item $R_T:$
        \begin{itemize}
            \item $\overline{3_{4,4_5,2}}=\{3_{4,4_5,2},3_{6,4,4_5,2},3_{6_5,6,4,4_5,2},3_{6_5,4,4_5,2},3_{1,4,4_5,2},3_{6,1,4,4_5,2},3_{6_5,1,4,4_5,2}.3_{6_5,6,1,4,4_5,2}\}$;
            \item $\overline{3_{4,2}}=\{3_{4,2},3_{6_5,4,2},3_{1,4,2},3_{6_5,1,4,2}\}$;
            \item $\overline{3_4}=\{3_4,3_{6_5,4}\}$;
            \item $\overline{3_{4_5}}=\{3_{4_5},3_{6,4_5}\}$;
            \item $\overline{3_2}=\{3_2,3_{1,2}\}$;
            \item $\overline{3_{4,4_5}}=\{3_{4,4_5},3_{6,4,4_5},3_{6_5,4,4_5},3_{6_5,6,4,4_5}\}$;
            \item $\overline{3_{4_5,2}}=\{3_{4_5,2},3_{6,4_5,2},3_{1,4_5,2},3_{6,1,4_5,2}\}$
            \item $\overline{3}=\{3\}$;
        \end{itemize}
        \item $R_{\frac{1}{2}}:$
        \begin{itemize}
            \item $\overline{3_{4,4_5,2}}=\{3_{4,4_5,2},3_{6,4,4_5,2},3_{6_5,6,4,4_5,2},3_{6_5,4,4_5,2},3_{1,4,4_5,2},3_{6,1,4,4_5,2},3_{6_5,1,4,4_5,2}.3_{6_5,6,1,4,4_5,2}\}$;
            \item $\overline{3_{4,2}}=\{3_{4,2},3_{6_5,4,2},3_{1,4,2},3_{6_5,1,4,2}\}$;
            \item $\overline{3_4}=\{3_4,3_{6_5,4}\}$;
            \item $\overline{3_{4_5}}=\{3_{4_5},3_{6,4_5}\}$;
            \item $\overline{3_2}=\{3_2,3_{1,2}\}$;
            \item $\overline{3_{4,4_5}}=\{3_{4,4_5},3_{6,4,4_5},3_{6_5,4,4_5},3_{6_5,6,4,4_5}\}$;
            \item $\overline{3_{4_5,2}}=\{3_{4_5,2},3_{6,4_5,2},3_{1,4_5,2},3_{6,1,4_5,2}\}$
            \item $\overline{3}=\{3\}$;
        \end{itemize}
        \item $R_{\frac{1}{4}}:$
        \begin{itemize}
            \item $\overline{3_{4,4_5,2}}=\{3_{4,4_5,2},3_{6,4,4_5,2},3_{6_5,6,4,4_5,2},3_{6_5,4,4_5,2},3_{1,4,4_5,2},3_{6,1,4,4_5,2},3_{6_5,1,4,4_5,2}.3_{6_5,6,1,4,4_5,2}\}$;
            \item $\overline{3_{4,2}}=\{3_{4,2},3_{6_5,4,2},3_{1,4_5,2},3_{6,1,4_5,2}\}$;
            \item $\overline{3_4}=\{3_4,3_{6_5,4}\}$;
            \item $\overline{3_{4_5}}=\{3_{4_5},3_{6,4_5}\}$;
            \item $\overline{3_2}=\{3_2,3_{1,2}\}$;
            \item $\overline{3_{4,4_5}}=\{3_{4,4_5},3_{6,4,4_5},3_{6_5,4,4_5},3_{6_5,6,4,4_5}\}$;
            \item $\overline{3_{4_5,2}}=\{3_{4_5,2},3_{6,4_5,2},3_{1,4,2},3_{6_5,1,4,2}\}$
            \item $\overline{3}=\{3\}$;
        \end{itemize}
        \item $R_{HW}:$
        \begin{itemize}
            \item[-] $\overline{3_{4,4_5,2}}=\{3_{4,4_5,2},3_{6_5,1,4,4_5,2},3_{1_2,6_5,1,4,4_5,2},3_{6_5,6,4,4_5,2},3_{6,1,4,4_5,2},3_{1_2,6,1,4,4_5,2}\}$;
            \item[-] $\overline{3_{1,4_5,2}}=\{3_{1,4_5,2},3_{1_2,1,4_5,2},3_{6,1,4_5,2},3_{1_2,6,1,4_5,2}\}$;
            \item[-] $\overline{3_{1,4,4_5,2}}=\{3_{1,4,4_5,2},3_{1_2,1,4,4_5,2},3_{6,4,4_5,2},3_{6_5,6,1,4,4_5,2},3_{6_5,4,4_5,2},3_{1_2,6_5,6,1,4,4_5,2}\}$;
            \item[-] $\overline{3_{1,2}}=\{3_{1,2},3_{1_2,1,2}\}$;
            \item[-] $\overline{3_{4,2}}=\{3_{4,2},3_{1,4,2},3_{1_2,1,4,2}\}$;
            \item[-] $\overline{3_{4,4_5}}=\{3_{4,4_5},3_{1_2,6,4,4_5},3_{6_5,4,4_5},3_{1_2,6_5,6,4,4_5}\}$;
            \item[-] $\overline{3_4}=\{3_4,3_{1_2,4}\}$;
            \item[-] $\overline{3_{6,4,4_5}}=\{3_{6,4,4_5},3_{1_2,4,4_5},3_{1_2,6_5,4,4_5},3_{6_5,6,4,4_5}\}$;
            \item[-] $\overline{3_{6_5,4}}=\{3_{6_5,4},3_{1_2,6_5,4}\}$;
            \item[-] $\overline{3_{6,4_5}}=\{3_{6,4_5},3_{1_2,4_5}\}$;
            \item[-] $\overline{3_{4_5,2}}=\{3_{4_5,2},3_{6,4_5,2}\}$;
            \item[-] $\overline{3_{4_5}}=\{3_{4_5},3_{1_2,6,4_5}\}$;
            \item[-] $\overline{3_{6_5,4,2}}=\{3_{6_5,4,2},3_{1_2,6_5,1,4,2},3_{6_5,1,4,2}\}$;
            \item[-] $\overline{3}=\{3\}$;
            \item[-] $\overline{3_{1_2}}=\{3_{1_2}\}$;
            \item[-] $\overline{3_2}=\{3_2\}$.
        \end{itemize}
    \end{itemize}
    The type-$(7,7)$ corners that are the images through $p$ of the type-$(7,i)$ ridges, with $i=1,6$, of the polytope $P$ are the following. We write $7_X \cap_k 7_Y$ to indicate a corner between the facets $7_X$ and $7_Y$ with angle $\frac{\pi}{k}$.
    \begin{itemize}
        \item[$R_T:$] $7\cap_3 7_{6_5}, 7_{1} \cap_3 7_{6_5,1}, 7_{6} \cap_3 7_{6_5,6},7_{6,1} \cap_3 7_{6_5,6,1},7 \cap_3 7_{6},7_{1} \cap_3 7_{6,1},7_{6_5} \cap_3 7_{6_5,6}, 7_{6_5,1} \cap_3 7_{6_5,6,1}, 7 \cap_2 7_{1}, 7_{6} \cap_2 7_{6,1},7_{6_5} \cap_2 7_{6_5,1},7_{6_5,6} \cap_2 7_{6_5,6,1}$;
        \item[$R_{\frac{1}{2}}:$] $7\cap_3 7_{6_5}, 7_{1} \cap_3 7_{6_5,1}, 7_{6} \cap_3 7_{6_5,6},7_{6,1} \cap_3 7_{6_5,6,1},7 \cap_3 7_{6},7_{1} \cap_3 7_{6,1},7_{6_5} \cap_3 7_{6_5,6}, 7_{6_5,1} \cap_3 7_{6_5,6,1}, 7 \cap_2 7_{6_5,6,1}, 7_{1} \cap_2 7_{6_5,6},7_{6_5} \cap_2 7_{6,1},7_{6_5,1} \cap_2 7_{6}$;
        \item[$R_{\frac{1}{4}}:$] $7\cap_3 7_{6_5}, 7_{1} \cap_3 7_{6_5,1}, 7_{6} \cap_3 7_{6_5,6},7_{6,1} \cap_3 7_{6_5,6,1},7 \cap_3 7_{6},7_{1} \cap_3 7_{6,1},7_{6_5} \cap_3 7_{6_5,6}, 7_{6_5,1} \cap_3 7_{6_5,6,1}, 7 \cap_2 7_{6,1}, 7_{6} \cap_2 7_{6_5,6,1},7_{6_5} \cap_2 7_{1},7_{6_5,6} \cap_2 7_{6_5,1}$;
        \item[$R_{HW}:$] $7\cap_3 7_{1_2,6,1}, 7_{1_2,1} \cap_3 7_{6}, 7_{1} \cap_3 7_{1_2,6},7_{1_2} \cap_3 7_{6,1},7_{6_5} \cap_3 7_{1_2,6_5,6,1},7_{1_2,6_5,1} \cap_3 7_{6_5,6},7_{1_2,6_5,6} \cap_3 7_{6_5,1}, $ $7_{1_2,6_5} \cap_3 7_{6_5,6,1}, 7_{1_2,6_5,1} \cap_3 7_{6,1}, 7_{1_2,1} \cap_3 7_{6_5,6,1}, 7_{1_2,6_5} \cap_3 7_{6}, 7_{1_2} \cap_3 7_{6_5,6}, 7_{6_5} \cap_3 7_{1_2,6}, 7 \cap_3 7_{1_2,6_5,6}, $ $ 7_{6_5,1} \cap_3 7_{1_2,6,1}, 7_{1} \cap_3 7_{1_2,6_5,6,1}, 7_{6_5,1} \cap_2 7_{1_2,6_5,1}, 7_{1} \cap_2 7_{1_2,1},7_{6_5,6,1} \cap_2 7_{1_2,6_5,6,1},7_{6,1} \cap_2 7_{1_2,6,1}$.
    \end{itemize}
\end{lemma}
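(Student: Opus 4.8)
The whole statement is a bookkeeping computation, and its engine is Remark~\ref{identity}: every gluing map $f$ extends to a symmetry $\hat f$ of $P$ (for $P_7$), or of the tessellation of $\matR^3$ by copies of $L_0$ (for $P_8$), which preserves the tessellation into copies of $P_0$ and hence preserves facet types. Since $\hat f$ is a product of the reflections $r_F$ used in the doublings, and each such $r_F$ sends a type-$i$ facet to a type-$i$ facet by construction, $\hat f$ permutes the named facets of $P$; concretely, $\hat f(3_X)$ and $\hat f(7_X)$ are computed by pre-composing the reflections of $f$ with the product defining $X$ and reading off the resulting facet from the list~\ref{item:I1}. This reduces every identification to an algebraic manipulation of subscripts.

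For the type-$3$ classes I would first record the identification rule. If $f\colon A\to B$ is a gluing map and a type-$3$ facet $F$ meets $A$, then the ridge $F\cap A$ is identified with $\hat f(F\cap A)=\hat f(F)\cap B$; since $\mathbf 3$ is orthogonal to both $\mathbf 1$ and $\mathbf 6$ in $P_0$ (there is no edge joining node $3$ to node $1$ or to node $6$ in $D$), Proposition~\ref{angolo} gives dihedral angle $\frac\pi2$ on both sides, so, exactly as in the proof of Lemma~\ref{tipi}, $p(F)$ and $p(\hat f(F))$ lie in the same facet of $R$ and $F\sim\hat f(F)$. Hence $\sim$ is generated by the partial maps $F\mapsto\hat f(F)$, each defined on the type-$3$ facets meeting the (sub)facet glued by $f$. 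For each of $R_T,R_{\frac12},R_{\frac14},R_{HW}$ I would then, for every gluing map, read off from the link pictures of~\ref{item:I3} (Figures~\ref{f2} and \ref{f7}--\ref{f13.4}) which type-$3$ facets meet the glued facet, compute their images under $\hat f$, and take the transitive closure; the orbits obtained are the listed classes.

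The type-$(7,7)$ corners arising from type-$(7,i)$ ridges ($i=1,6$) follow from the same mechanism. If $f\colon A\to B$ glues a facet $A$ of type $i\in\{1,6\}$ and a type-$7$ facet $7_X$ meets $A$ along a ridge of angle $\theta$, then after gluing, $7_X$ and $\hat f(7_X)$ meet, along the image of that ridge, at angle $2\theta$, because $\hat f$ is an isometry carrying $A$ to $B$, so the angle of $\hat f(7_X)$ against $B$ is again $\theta$. By Proposition~\ref{angolo} and $D$ we have $\theta=\frac\pi4$ for $i=1$ (corner of angle $\frac\pi2$, label $2$) and $\theta=\frac\pi6$ for $i=6$ (corner of angle $\frac\pi3$, label $3$), which already accounts for the two labels appearing in the list. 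It then remains to enumerate, for each glued type-$1$ or type-$6$ facet, the type-$7$ facets meeting it and apply $\hat f$, yielding precisely the stated corners.

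I expect the genuine obstacle to be $R_{HW}$, where $6_{4,5}$ and $6_{6_5,4,5}$ are each split into halves $6^U,6^D$ that are self-glued by $r_6\circ(r_1\circ r_2)^2$ (Figures~\ref{f16} and~\ref{f17}). There the partial maps $F\mapsto\hat f(F)$ are defined only on the facets meeting the correct half, so the two halves must be tracked separately when reading the link, to avoid spurious or missing identifications; I would treat this case last, after checking that $r_6\circ(r_1\circ r_2)^2$ fixes the relevant plane of $\matR^3$ as drawn. Finally, as a consistency check I would verify that every type-$3$ facet of $P$ listed in~\ref{item:I1} falls in exactly one class, and that the number of listed type-$(7,7)$ corners equals the number of type-$(7,i)$ ridges, $i=1,6$, of $P$.
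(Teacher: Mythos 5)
Your proposal follows essentially the same route as the paper: a case-by-case reading of the link pictures for each gluing map, powered by Remark \ref{identity} and Proposition \ref{angolo}, with the orthogonality of $\mathbf{3}$ to $\mathbf{1}$ and $\mathbf{6}$ (and the labels $4$ and $6$ on the edges joining $7$ to $1$ and to $6$ in $D$) accounting for which type-$3$ facets merge and for the corner labels $2$ and $3$, including the correct flagging of the half-facet subtlety in $R_{HW}$. The only quibble is your final sanity check: each type-$(7,7)$ corner is the image of a \emph{pair} of type-$(7,i)$ ridges, one on each side of the gluing, so the number of listed corners should be half the number of such ridges of $P$, not equal to it.
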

\begin{proof}
    We begin with $R_T,R_{\frac{1}{2}},R_{\frac{1}{4}}$. The gluings of $6_{4,5}$ with $ 6_{6_5,4,5}$, and of $6_{4_5}$ with $6_{6,4_5}$ are in common with every case.

    \begin{itemize}
        \item \underline{$6_{4,5}$ and $6_{6_5,4,5}$}: We refer to Figure \ref{f12.1} and \ref{f18} for the information \ref{item:I3} on these two facets and the way to glue them. The latter figure is not necessary, since we can deduce its content from Figure \ref{f14}, but it helps the reader to check the results.

        \begin{figure}\label{f18}
        \includegraphics{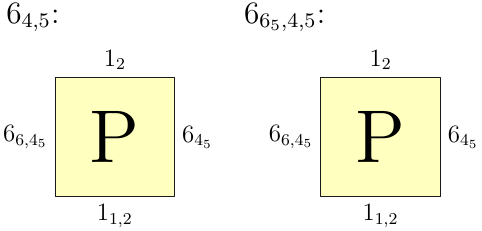}
            \caption{The way to glue the facets $6_{4,5}$ and $6_{6_5,4,5}$ of $L_7$.}
        \end{figure}



        Hence we see that:
        \begin{align*}
            \quad \quad \quad 3_{6,4,4_5,2} \sim 3_{6_5,6,4,4_5,2}; \quad \quad
            & 3_{4,2} \sim 3_{6_5,4,2}; \quad \quad
            & 3_{4,4_5,2} \sim 3_{6_5,4,4_5,2}; \\
            3_{6,4,4_5} \sim 3_{6_5,6,4,4_5}; \quad \quad
            & 3_{4} \sim 3_{6_5,4}; \quad \quad
            & 3_{4,4_5} \sim 3_{6_5,4,4_5}; \\
            3_{6,1,4,4_5,2} \sim 3_{6_5,6,1,4,4_5,2}; \quad \quad
            & 3_{1,4,2} \sim 3_{6_5,1,4,2}; \quad \quad 
            & 3_{1,4,4_5,2} \sim 3_{6_5,1,4,4_5,2}.
        \end{align*}
        

        Moreover, both $6_{4,5}$ and $6_{6_5,4,5}$ meet $4$ facets of type $7$, with a dihedral angle of $\frac{\pi}{6}$ by Proposition \ref{angolo} (since in $P_0$ the dihedral angle between $6$ and $7$ is $\frac{\pi}{6}$). 
        Hence, in $R$, from the picture we notice that there are the following corners with angle $\frac{2\pi}{6}=\frac{\pi}{3}$.
        $$
            7 \cap_3 7_{6_5}; \quad \quad
             7_1 \cap_3 7_{6_5,1}; \quad \quad
             7_6 \cap_3 7_{6_5,6}; \quad \quad
             7_{6,1} \cap_3 7_{6_5,6,1}.
        $$


    \item \underline{$6_{4_5}$ and $6_{6,4_5}$}: We refer to Figure \ref{f12.2} and \ref{f19}.
    \begin{figure}
    \includegraphics{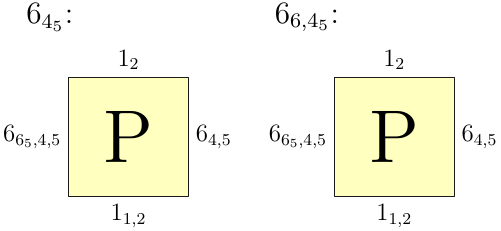}
            \caption{The way to glue the facets $6_{4_5}$ and $6_{6,4_5}$ of $L_7$.}\label{f19}
    \end{figure}    
    The same argument as before leads to the following: 
    \begin{align*}
            \quad \quad \quad 3_{4,4_5,2} \sim 3_{6,4,4_5,2}; \quad \quad
            & 3_{4_5,2} \sim 3_{6,4_5,2}; \quad \quad
            & 3_{6_5,4,4_5,2} \sim 3_{6_5,6,4,4_5,2}; \\ 
            3_{4,4_5} \sim 3_{6,4,4_5}; \quad \quad
            & 3_{1,4,4_5,2} \sim 3_{6,1,4,4_5,2}; \quad \quad
            & 3_{1,4_5,2} \sim 3_{6,1,4_5,2}; \\
            3_{6_5,1,4,4_5,2} \sim 3_{6_5,6,1,4,4_5,2}; \quad \quad
            & 3_{4_5} \sim 3_{6,4_5}; \quad \quad
            & 3_{6_5,4,4_5} \sim 3_{6_5,6,4,4_5}.
        \end{align*}


    Moreover we have:
    $$
            7 \cap_3 7_{6}; \quad \quad
             7_1 \cap_3 7_{6,1}; \quad \quad
             7_{6_5} \cap_3 7_{6_5,6}; \quad \quad
             7_{6_5,1} \cap_3 7_{6_5,6,1}.
    $$
    

    \end{itemize}

    We now consider the gluings which are specific for each one of the three cases.
    In each case we glue $1_2$ with $1_{1,2}$. We refer to Figure \ref{f12} and \ref{f20}.

    \begin{figure}\label{f20}
    
    \includegraphics{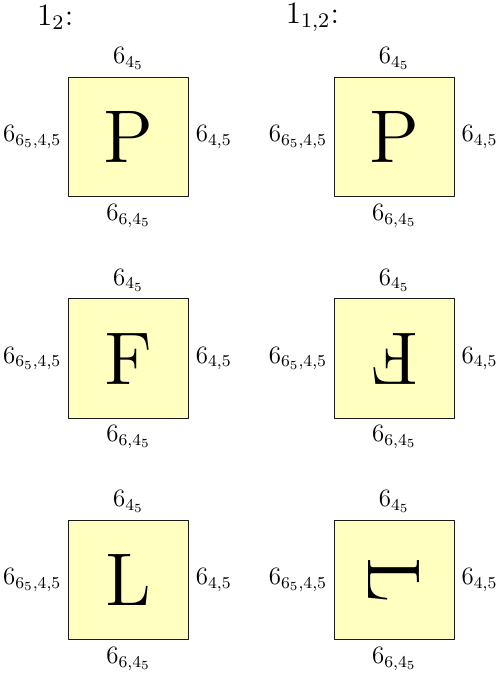}
            \caption{The way to glue the facets $1_{2}$ and $1_{1,2}$ of $L_7$ for the $3$-torus case (top), the $\frac{1}{2}$-twist manifold case (center) and the $\frac{1}{4}$-twist manifold case (bottom).}
    \end{figure}

    \begin{itemize}
        \item $3$-torus:
        \begin{align*}
            \quad \quad \quad 3_{6_5,4,4_5,2} \sim 3_{6_5,1,4,4_5,2}; \quad \quad
            & 3_{4_5,2} \sim 3_{1,4_5,2}; \quad \quad
            & 3_{4,4_5,2} \sim 3_{1,4,4_5,2}; \\
            3_{6_5,4,2} \sim 3_{6_5,1,4,2}; \quad \quad
            & 3_{2} \sim 3_{1,2}; \quad \quad
            & 3_{4,2} \sim 3_{1,4,2}; \\
            3_{6_5,6,4,4_5,2} \sim 3_{6_5,6,1,4,4_5,2}; \quad \quad
            & 3_{6,4_5,2} \sim 3_{6,1,4_5,2}; \quad \quad
            & 3_{6,4,4_5,2} \sim 3_{6,1,4,4_5,2}.
        \end{align*}


        Moreover we have:
        \begin{align*}
            7 \cap_2 7_{1}; \quad \quad
             7_6 \cap_2 7_{6,1}; \quad \quad
             7_{6_5} \cap_2 7_{6_5,1}; \quad \quad
             7_{6_5,6} \cap_2 7_{6_5,6,1}.
        \end{align*}


        Putting together the results of the three gluings for the torus, we have the thesis for $R_T$. We proceed similarly for the other cases.

        \item $\frac{1}{2}$-twist manifold:
        \begin{align*}
            \quad \quad \quad 3_{6_5,4,4_5,2} \sim 3_{6,1,4,4_5,2}; \quad \quad
            & 3_{4_5,2} \sim 3_{6,1,4_5,2}; \quad \quad
            & 3_{4,4_5,2} \sim 3_{6_5,6,1,4,4_5,2}; \\
            3_{6_5,4,2} \sim 3_{1,4,2}; \quad \quad
            & 3_{2} \sim 3_{1,2}; \quad \quad
            & 3_{4,2} \sim 3_{6_5,1,4,2}; \\
            3_{6_5,6,4,4_5,2} \sim 3_{1,4,4_5,2}; \quad \quad
            & 3_{6,4_5,2} \sim 3_{1,4_5,2}; \quad \quad
            & 3_{6,4,4_5,2} \sim 3_{6_5,1,4,4_5,2}.
        \end{align*}


        Moreover we have:
        $$
            7 \cap_2 7_{6_5,6,1}; \quad \quad
             7_1 \cap_2 7_{6_5,6}; \quad \quad
             7_{6_5} \cap_2 7_{6,1}; \quad \quad
             7_{6_5,1} \cap_2 7_{6}.
        $$
        


        \item $\frac{1}{4}$-twist manifold:
        \begin{align*}
            \quad \quad \quad 3_{6_5,4,4_5,2} \sim 3_{1,4,4_5,2}; \quad \quad
            & 3_{4_5,2} \sim 3_{1,4,2}; \quad \quad
            & 3_{4,4_5,2} \sim 3_{6,1,4,4_5,2}; \\
            3_{6_5,4,2} \sim 3_{1,4_5,2}; \quad \quad
            & 3_{2} \sim 3_{1,2}; \quad \quad
            & 3_{4,2} \sim 3_{6,1,4_5,2}; \\
            3_{6_5,6,4,4_5,2} \sim 3_{6_5,1,4,4_5,2}; \quad \quad
            & 3_{6,4_5,2} \sim 3_{6_5,1,4,2}; \quad \quad
            & 3_{6,4,4_5,2} \sim 3_{6_5,6,1,4,4_5,2}.
        \end{align*}


        Moreover we have:
        $$
         7 \cap_2 7_{6,1}; \quad \quad
             7_6 \cap_2 7_{6_5,6,1}; \quad \quad
             7_{6_5} \cap_2 7_{1}; \quad \quad
             7_{6_5,6} \cap_2 7_{6_5,1}.
        $$
    
    \end{itemize}
    
    In the last part of the proof we study the gluings of $P_8$ to form $R_{HW}$.

    \begin{itemize}
    \item $\underline{6_{4,5}}$: We refer to Figure \ref{f13.1} and \ref{f21}.
    \begin{figure}
    \centering
    \begin{minipage}{.5\textwidth}
    \centering
    \includegraphics{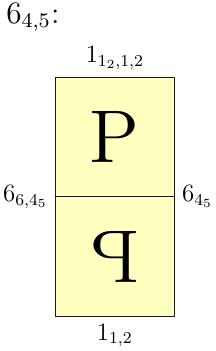}
        \caption{The way to glue the facet $6_{4,5}$ of $L_8$.}\label{f21}
    \end{minipage}%
    \begin{minipage}{.5\textwidth}
    \centering
    \includegraphics{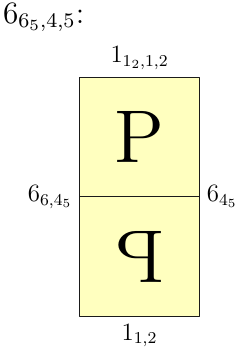}
        \caption{The way to glue the facet $6_{6_5,4,5}$ of $L_8$.}\label{f22}
    \end{minipage}
    \end{figure}
    \begin{align*}
            \quad \quad \quad 3_{1_2,6,1,4,4_5,2} \sim 3_{4,4_5,2}; \quad \quad
            & 3_{1_2,1,4,2} \sim 3_{4,2}; \quad \quad
            & 3_{1_2,1,4,4_5,2} \sim 3_{6,4,4_5,2}; \\
            3_{1_2,6,4,4_5} \sim 3_{4,4_5}; \quad \quad
            & 3_{1_2,4} \sim 3_{4}; \quad \quad
            & 3_{1_2,4,4_5} \sim 3_{6,4,4_5}; \\
            3_{6,4,4_5,2} \sim 3_{1,4,4_5,2}; \quad \quad
            & 3_{4,2} \sim 3_{1,4,2}; \quad \quad
            & 3_{4,4_5,2} \sim 3_{6,1,4,4_5,2}.
    \end{align*}

    Moreover we have:
    $$
           7_{1_2,6,1} \cap_3 7; \quad \quad
             7_{1_2,1} \cap_3 7_6; \quad \quad
             7_{1_2,6} \cap_3 7_1; \quad \quad
             7_{1_2} \cap_3 7_{6,1}.
    $$

    \item $\underline{6_{6_5,4,5}}$: We refer to Figure \ref{f13.2} and \ref{f22}.
    \begin{align*}
            \quad \quad \quad 3_{1_2,6_5,6,1,4,4_5,2} \sim 3_{6_5,4,4_5,2}; \quad \quad
            & 3_{1_2,6_5,1,4,2} \sim 3_{6_5,4,2}; \quad \quad
            & 3_{1_2,6_5,1,4,4_5,2} \sim 3_{6_5,6,4,4_5,2}; \\
            3_{1_2,6_5,6,4,4_5} \sim 3_{6_5,4,4_5}; \quad \quad
            & 3_{1_2,6_5,4} \sim 3_{6_5,4}; \quad \quad
            & 3_{1_2,6_5,4,4_5} \sim 3_{6_5,6,4,4_5}; \\
            3_{6_5,6,4,4_5,2} \sim 3_{6_5,1,4,4_5,2}; \quad \quad
            & 3_{6_5,4,2} \sim 3_{6_5,1,4,2}; \quad \quad
            & 3_{6_5,4,4_5,2} \sim 3_{6_5,6,1,4,4_5,2}.
    \end{align*}

    Moreover we have:
    $$
           7_{1_2,6_5,6,1} \cap_3 7_{6_5}; \quad \quad
             7_{1_2,6_5,1} \cap_3 7_{6_5,6}; \quad \quad
             7_{1_2,6_5,6} \cap_3 7_{6_5,1}; \quad \quad
             7_{1_2,6_5} \cap_3 7_{6_5,6,1}.
    $$

    \item \underline{$6_{4_5}$ and $6_{6,4_5}$}: We refer to Figure \ref{f13.3}, \ref{f13.4} and \ref{f23}.
    \begin{figure}
    \includegraphics{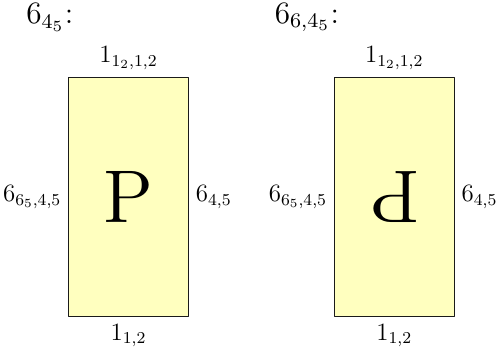}
        \caption{The way to glue the facets $6_{4_5}$ and $6_{6,4_5}$ of $L_8$.}\label{f23}
    \end{figure}
    \begin{align*}
            \quad \quad \quad 3_{1_2,6_5,1,4,4_5,2} \sim 3_{6,1,4,4_5,2}; \quad \quad
            & 3_{1_2,1,4_5,2} \sim 3_{6,1,4_5,2}; \quad \quad
            & 3_{1_2,1,4,4_5,2} \sim 3_{6_5,6,1,4,4_5,2}; \\
            3_{1_2,6_5,4,4_5} \sim 3_{6,4,4_5}; \quad \quad
            & 3_{1_2,4_5} \sim 3_{6,4_5}; \quad \quad
            & 3_{1_2,4,4_5} \sim 3_{6_5,6,4,4_5}; \\
            3_{6_5,4,4_5,2} \sim 3_{6,4,4_5,2}; \quad \quad
            & 3_{4_5,2} \sim 3_{6,4_5,2}; \quad \quad
            & 3_{4,4_5,2} \sim 3_{6_5,6,4,4_5,2} \\
            3_{6_5,4,4_5} \sim 3_{1_2,6,4,4_5}; \quad \quad
            & 3_{4_5} \sim 3_{1_2,6,4_5}; \quad \quad
            & 3_{4,4_5} \sim 3_{1_2,6_5,6,4,4_5} \\
            3_{6_5,1,4,4_5,2} \sim 3_{1_2,6,1,4,4_5,2}; \quad \quad
            & 3_{1,4_5,2} \sim 3_{1_2,6,1,4_5,2}; \quad \quad
            & 3_{1,4,4_5,2} \sim 3_{1_2,6_5,6,1,4,4_5,2}.
    \end{align*}

    Moreover we have:
    \begin{align*}
           7_{1_2,6_5,1} \cap_3 7_{6,1}; \quad \quad
            & 7_{1_2,1} \cap_3 7_{6_5,6,1}; \quad \quad
            & 7_{1_2,6_5} \cap_3 7_{6}; \quad \quad
            & 7_{1_2} \cap_3 7_{6_5,6} \\
            7_{6_5} \cap_3 7_{1_2,6}; \quad \quad
            & 7 \cap_3 7_{1_2,6_5,6}; \quad \quad
            & 7_{6_5,1} \cap_3 7_{1_2,6,1}; \quad \quad
            & 7_{1} \cap_3 7_{1_2,6_5,6,1}.
    \end{align*}

    \item \underline{$1_{1,2}$ and $1_{1_2,1,2}$}: We refer to Figure \ref{f13} and \ref{f24}.
    \begin{figure}
    \includegraphics{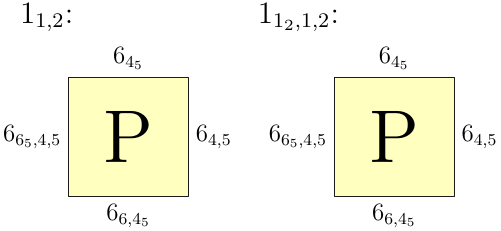}
        \caption{The way to glue the facets $1_{1,2}$ and $1_{1_2,1,2}$ of $L_8$.}\label{f24}
    \end{figure}
     \begin{align*}
            \quad \quad \quad 3_{6_5,1,4,4_5,2} \sim 3_{1_2,6_5,1,4,4_5,2}; \quad \quad
            & 3_{1,4_5,2} \sim 3_{1_2,1,4_5,2}; \quad \quad
            & 3_{1,4,4_5,2} \sim 3_{1_2,1,4,4_5,2}; \\
            3_{6_5,1,4,2} \sim 3_{1_2,6_5,1,4,2}; \quad \quad
            & 3_{1,2} \sim 3_{1_2,1,2}; \quad \quad
            & 3_{1,4,2} \sim 3_{1_2,1,4,2}; \\
            3_{6_5,6,1,4,4_5,2} \sim 3_{1_2,6_5,6,1,4,4_5,2}; \quad \quad
            & 3_{6,1,4_5,2} \sim 3_{1_2,6,1,4_5,2}; \quad \quad
            & 3_{6,1,4,4_5,2} \sim 3_{1_2,6,1,4,4_5,2}.
    \end{align*}

    Moreover we have:
    $$
           7_{6_5,1} \cap_2 7_{1_2,6_5,1}; \quad \quad
             7_1 \cap_2 7_{1_2,1}; \quad \quad
             7_{6_5,6,1} \cap_2 7_{1_2,6_5,6,1}; \quad \quad
             7_{6,1} \cap_2 7_{1_2,6,1}.
    $$

    \end{itemize}
\end{proof}

\subsection{The space \texorpdfstring{$R$}{R} is a \texorpdfstring{$1$}{1}-cusped developable reflectofold.}\label{ref3}
We conclude here the proof of Theorem \ref{Rbello}.

Recall Definition \ref{defcgraph} of the corner graphs $G_3$ and $G_7$. We can now recover enough information about them.

\begin{defn}
Let $\widetilde{G_3}$ be the graph obtained by identifying the vertices of the adjacency graph of facets of type $3$ of $P$ by the relation $\sim$.
Let $\widetilde{G_7}$ be the graph obtained by taking the adjacency graph of facets of type $7$ of $P$ and adding a labelled edge $(F,G;k)$, for every $F\cap_k G$ in Lemma \ref{facce}.    
\end{defn}

\begin{prop}
\label{G3}
    The corner graph $G_3$ is a subgraph of $\widetilde{G_3}$. More specifically, the vertices of the two graphs are the same, while if two vertices of $\widetilde{G_3}$ have $m$ edges connecting them with label $l$, in $G_3$ we have $n$ edges with label $l$ between the corresponding vertices, with $1\le n \le m$.
\end{prop}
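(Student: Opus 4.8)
The plan is to show that the quotient map $p$ induces a label- and endpoint-preserving \emph{surjection} from the edges of $\widetilde{G_3}$ onto the edges of $G_3$; the double inequality $1\le n\le m$ will then drop out as a counting statement about the fibres of this surjection.

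First I would record that the two graphs share the same vertex set: by Lemma \ref{tipi} the type-$3$ facets of $R$ are exactly the $\sim$-classes of type-$3$ facets of $P$, and these are by definition the vertices of both $G_3$ and $\widetilde{G_3}$. Next I would define the edge map. Since two facets of the convex polytope $P$ meet in at most one ridge, a label-$l$ edge of $\widetilde{G_3}$ between two classes $\overline F$ and $\overline G$ is precisely a type-$(3,3)$ ridge $\rho=F_i\cap G_j$ of $P$ with $F_i\in\overline F$, $G_j\in\overline G$ and dihedral angle $\tfrac{\pi}{l}$. I would send such a $\rho$ to the corner $C$ of $R$ that contains $p(\rho)$.

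The key geometric input is that $p$ preserves the dihedral angle along type-$(3,3)$ ridges. A generic point of $\rho$ lies on no glued facet (the glued facets have type $1$ or $6$, whereas $\rho$ is contained only in the two type-$3$ facets $F_i,G_j$), so near it $p$ is an isometric embedding; moreover, by Proposition \ref{angolo} — exactly as in the proof of Lemma \ref{corner} — the glued facets meet every type-$3$ facet orthogonally, so the gluing never changes the angle of a type-$(3,3)$ ridge. Hence $C$ is a type-$(3,3)$ corner of angle $\tfrac{\pi}{l}$. Because $p(\rho)\subseteq p(F_i)\cap p(G_j)\subseteq\overline F\cap\overline G$, and the two type-$3$ facets of $R$ through $C$ (well defined by connectedness of $C$) are the $p$-images of the two type-$3$ facets of $P$ through a generic point of $\rho$, these two facets are exactly $\overline F$ and $\overline G$; thus $C$ is a label-$l$ edge of $G_3$ between $\overline F$ and $\overline G$, and the map is well defined between the prescribed edge sets.

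Finally I would prove surjectivity and conclude. Fix $\overline F,\overline G$ and a label $l$; let $m$ and $n$ be the numbers of label-$l$ edges between them in $\widetilde{G_3}$ and $G_3$ respectively. Any label-$l$ edge of $G_3$ between $\overline F$ and $\overline G$ is a type-$(3,3)$ corner $C$ of angle $\tfrac{\pi}{l}$, and by Lemma \ref{corner} one has $C=p\big(\bigcup_k\rho_k\big)$ for finitely many type-$(3,3)$ ridges $\rho_k$; by the previous step each $\rho_k$ is a label-$l$ edge of $\widetilde{G_3}$ between $\overline F$ and $\overline G$ mapping to $C$. So the edge map restricts to a surjection from the $m$ ridges onto the $n$ corners, whence $n\le m$; and if $m\ge1$ then a ridge exists and its image is one such corner, so $n\ge1$. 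This yields $1\le n\le m$ and proves that $G_3$ is a subgraph of $\widetilde{G_3}$ in the stated sense. The hard part will be the bookkeeping of the third paragraph: checking that the corner containing $p(\rho)$ lies in exactly the two facets $\overline F,\overline G$ — and in no other pair — which relies on the local model of $R$ at a corner as the intersection of two facet-germs together with the orthogonality furnished by Proposition \ref{angolo}.
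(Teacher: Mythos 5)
Your proof is correct and follows essentially the same route as the paper's: identify the vertices with the $\sim$-classes, use Lemma \ref{corner} to write each type-$(3,3)$ corner as the image under $p$ of a union of type-$(3,3)$ ridges, and obtain $1\le n\le m$ by counting the fibres of the resulting edge surjection. The one point where you diverge is the consistency of labels on the ridges lying over a single corner: the paper checks this by inspecting the adjacency matrices in Tables \ref{t7} and \ref{t8} together with Lemma \ref{facce}, whereas you deduce it a priori from the fact that $p$ is a local isometry at a generic point of a type-$(3,3)$ ridge (such a point lies on no glued facet, and the glued facets meet the type-$3$ facets orthogonally by Proposition \ref{angolo}), so every ridge in the fibre inherits the dihedral angle of the corner; this replaces a case check by a self-contained geometric argument and is, if anything, a small improvement.
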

\begin{proof}
    The vertices of $\widetilde{G_3}$ coincide with the ones of $G_3$ by Lemma \ref{facce}.

    Let $\overline{3_X}=\{3_{X_1},\dots,3_{X_k}\}$ and $\overline{3_Y}=\{3_{Y_1},\dots,3_{Y_k}\}$ be two equivalence classes of type-$3$ facets of $P$.
    By construction, for every ridge between two facets $3_{X_i}$ and $3_{Y_j}$ with dihedral angle $\frac{\pi}{k}$, there is an edge in $\widetilde{G_3}$ between the vertices $\overline{3_X}$ and $\overline{3_Y}$ with label $k$.

    By Lemma \ref{corner} a type-$(3,3)$ corner of $R$ is the image through $p$ of a union of some type-$(3,3)$ ridges of $P$. Hence if the image through $p$ of the union of $r$ ridges is a single corner between the facets $\overline{3_X}$ and $\overline{3_Y}$ in $R$, then in $G_3$ we have one edge between $\overline{3_X}$ and $\overline{3_Y}$; while in $\widetilde{G_3}$ we have $r$ edges between them. It is easy to check that these $r$ edges have the same label associated (by checking the adiacency graph of $P$ in Table \ref{t7} and \ref{t8}, and the results of Lemma \ref{facce}).
\end{proof}

\begin{prop}
\label{G7}
    The corner graph $G_7$ is equal to $\widetilde{G_7}$.
\end{prop}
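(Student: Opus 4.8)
The plan is to upgrade the subgraph argument of Proposition \ref{G3} to an equality, by showing that the assignment induced by $p$ from the relevant ridges of $P$ to the type-$(7,7)$ corners of $R$ is a label-preserving \emph{bijection}, rather than merely a surjection. First I would dispose of the vertices: by Lemma \ref{tipi} the map $p$ restricts to a bijection between the type-$7$ facets of $P$ and the type-$7$ facets of $R$, and these are precisely the vertices of both $G_7$ and $\widetilde{G_7}$, so the two graphs share the same vertex set.

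For the edges I would invoke Lemma \ref{corner}: every type-$(7,7)$ corner of $R$ is the image under $p$ of a \emph{single} ridge of $P$ (and not, as in the type-$(3,3)$ case, the image of a union of ridges), this ridge being either a type-$(7,7)$ ridge or a type-$(7,i)$ ridge with $i=1,6$. The type-$(7,7)$ ridges of $P$ are by definition the edges of the adjacency graph of the type-$7$ facets of $P$, i.e.\ the first family of edges of $\widetilde{G_7}$; the corners arising from type-$(7,i)$ ridges are exactly those enumerated in Lemma \ref{facce}, i.e.\ the edges $(F,G;k)$ adjoined to form $\widetilde{G_7}$. The labels match in both families: for a type-$(7,7)$ ridge the dihedral angle is unaffected by $p$ since the ridge lies away from the glued walls, whereas for a glued type-$(7,i)$ ridge the angle is computed from Proposition \ref{angolo} together with the doubling produced by the gluing, which is exactly the value recorded in Lemma \ref{facce}. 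Thus every edge of $\widetilde{G_7}$ produces a type-$(7,7)$ corner of $R$ with the correct label, and every such corner is produced in this way.

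The crux — and the point where this proposition genuinely departs from Proposition \ref{G3} — is \emph{injectivity}, namely that distinct ridges are not collapsed by $p$ onto a common corner. In the type-$3$ situation a facet of $R$ is an entire equivalence class $\overline{3_X}$, a union of several type-$3$ facets of $P$, so one corner of $R$ can be the image of several type-$(3,3)$ ridges, and $G_3$ is only a subgraph. Here the structure is rigid: by Lemma \ref{tipi} each type-$7$ facet of $R$ is the image of a \emph{single} type-$7$ facet of $P$. Hence a corner between two type-$7$ facets $p(M)$ and $p(N)$ that comes from a type-$(7,7)$ ridge must equal $p(M\cap N)$, and the unordered pair $\{M,N\}$ is recovered from $\{p(M),p(N)\}$ through the bijection of Lemma \ref{tipi}; moreover the relative interior of $M\cap N$ avoids the glued type-$1$ and type-$6$ facets, so $p$ is injective there and two distinct such ridges cannot share an image. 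For the type-$(7,i)$ case the corners are already listed one by one in Lemma \ref{facce}, so it only remains to verify that this list carries no repetitions and is disjoint from the images of the type-$(7,7)$ ridges — a routine bookkeeping made possible by the same single-facet rigidity. Establishing this absence of collapsing is the main obstacle; once it is in place, the induced map on edges is a label-preserving bijection and $G_7=\widetilde{G_7}$ follows.
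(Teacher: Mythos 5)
Your proposal is correct and follows the same route as the paper, whose entire proof consists of citing Lemma \ref{tipi} for the vertex identification and Lemma \ref{corner} for the edges. The injectivity point you single out as the crux is precisely what the paper leaves implicit in that second citation, and your justification of it (the single-facet rigidity from Lemma \ref{tipi} together with the explicit enumeration in Lemma \ref{facce}) is sound.
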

\begin{proof}
    The vertices of $\widetilde{G_7}$ coincide with the ones of $G_7$ by Lemma \ref{tipi}.
    By Lemma \ref{corner} the edges of $G_7$ are the ones of $\widetilde{G_7}$. 
\end{proof}

It is easy to verify (by checking the adjacency matrices of $P$ in Table \ref{t7} and \ref{t8}, and the results of Lemma \ref{facce}) that $\widetilde{G_3}$ and $\widetilde{G_7}$ have no loop (an edge connecting one vertex to itself) and if two vertices have more then one edge connecting them, all these edges have the same label. Hence it makes sense to define the adjacency matrices of $R$.

\begin{defn}
    For $i=3,7$, the \emph{type-$i$ adjacency matrix} of $R$ is the matrix where in the entry corresponding to the type-$i$ facet $A$ and $B$ we put $1$ if $A=B$, we put $0$ if $A\cap B=\emptyset$, we put $k$ if the dihedral angle at the corners of $A\cap B$ is $\frac{\pi}{k}$ and we put $\underline{\alpha}$ if the dihedral angle at the corners of $A\cap B$ is $\alpha\ne\frac{\pi}{k}$, for every $k$.
\end{defn}

One could also obtain the adjacency matrix of $R$, but we are only interested in the type-$3$ and type-$7$ ones, which are the submatrices corresponding to the facets of type $3$ and of type $7$, respectively.


\begin{prop}
    For $i=3,7$, the type-$i$ adjacency matrix of $R$ is in Tables \ref{tr1}, \ref{tr2}, \ref{tr3} and \ref{tr4}.
\end{prop}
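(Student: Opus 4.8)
The plan is to treat the two cases $i=7$ and $i=3$ separately, using Propositions \ref{G7} and \ref{G3} to reduce everything to combinatorial data about $P$ (i.e.\ $P_7$ for $R_T,R_{\frac12},R_{\frac14}$ and $P_8$ for $R_{HW}$) that is already recorded in Tables \ref{t7} and \ref{t8} together with Lemma \ref{facce}.

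First I would handle the type-$7$ matrix. By Proposition \ref{G7} we have $G_7=\widetilde{G_7}$, and $\widetilde{G_7}$ is by definition the type-$7$ adjacency graph of $P$ (read off from Table \ref{t7} or \ref{t8}) with the extra labelled edges $(7_X,7_Y;k)$ coming from the type-$(7,i)$, $i=1,6$, corners listed at the end of Lemma \ref{facce}. Since we have already checked that $\widetilde{G_7}$ has no loop and that all parallel edges carry a common label, its adjacency matrix is well defined, and I would obtain it simply by superimposing those additional edges onto the matrix taken from the table. This gives the type-$7$ block of the relevant Table among \ref{tr1}--\ref{tr4}.

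For the type-$3$ matrix I would first observe that the adjacency matrix of $G_3$ equals that of $\widetilde{G_3}$: by Proposition \ref{G3} the two graphs have the same vertex set, and since the number $n$ of edges in $G_3$ between a given pair of vertices satisfies $1\le n\le m$ (where $m$ is the corresponding number in $\widetilde{G_3}$), an edge with label $l$ is present in $G_3$ if and only if it is present in $\widetilde{G_3}$. As the adjacency matrix records only the presence and the label of an edge, not its multiplicity, it suffices to compute $\widetilde{G_3}$. To do so I would collapse the vertices of the type-$3$ adjacency graph of $P$ along the equivalence relation $\sim$, whose classes are tabulated in Lemma \ref{facce}; then, for each pair of classes $\overline{3_X},\overline{3_Y}$, I would scan the type-$3$ adjacency matrix of $P$ in Table \ref{t7} or \ref{t8} to decide whether some representative of $\overline{3_X}$ meets some representative of $\overline{3_Y}$ and, if so, record the common label (placing $1$ on the diagonal and $0$ for non-meeting classes). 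The fact that all ridges joining two fixed classes carry the same label—already noted before the definition of the adjacency matrix of $R$—is exactly what makes each entry unambiguous.

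The main obstacle is purely the volume of bookkeeping: for $R_{HW}$ there are many type-$3$ facets of $P_8$ grouped into the classes of Lemma \ref{facce}, and filling in each entry requires reading the large matrix of Table \ref{t8}. The procedure is entirely mechanical once the equivalence classes and the augmenting $(7,i)$-edges are in hand, so the only real risk is transcription error; here the required coincidence of labels within each collapsed pair serves as a convenient internal consistency check. Carrying this out entry by entry for each of the four reflectofolds produces exactly the matrices displayed in Tables \ref{tr1}, \ref{tr2}, \ref{tr3} and \ref{tr4}.
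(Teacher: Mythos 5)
Your proposal is correct and follows essentially the same route as the paper: it invokes Propositions \ref{G3} and \ref{G7} to reduce the computation to $\widetilde{G_3}$ and $\widetilde{G_7}$, and uses the fact that the adjacency matrix records only the presence and label of an edge (not its multiplicity) so that the discrepancy between $G_3$ and $\widetilde{G_3}$ is immaterial. The remaining work is exactly the mechanical bookkeeping from Lemma \ref{facce} and Tables \ref{t7}--\ref{t8} that the paper also relies on.
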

\begin{proof}
    By Proposition \ref{G3} and \ref{G7}, if $\widetilde{G_i}$ has at least one edge with label $l$ between the vertices $A$ and $B$, then in the matrix the entry between $A$ and $B$ is $l$. If in $\widetilde{G_i}$ there is no edge between $A$ and $B$, then there is a $0$ in the corresponding entry.
\end{proof}



\begin{table}
\begin{tabular}{|c||c|c|c|c|c|c|c|c|}
 \hhline{|=#=|=|=|=|=|=|=|=|}
$3$ & $1$ & $2$ & $3$ &  & $3$ &  &  & \\ \hline
$3_2$ & $2$ & $1$ &  & $3$ &  & $3$ &  & \\ \hline
$3_{4_5}$ & $3$ &  & $1$ & $2$ &  &  & $3$ & \\ \hline
$3_{4_5,2}$ &  & $3$ & $2$ & $1$ &  &  &  & $3$ \\ \hline
$3_{4}$ & $3$ &  &  &  & $1$ & $2$ & $3$ & \\ \hline
$3_{4,2}$ &  & $3$ &  &  & $2$ & $1$ &  & $3$ \\ \hline
$3_{4,4_5}$ &  &  & $3$ &  & $3$ &  & $1$ & $2$ \\ \hline
$3_{4,4_5,2}$ &  &  &  & $3$ &  & $3$ & $2$ & $1$\\ \hline
\end{tabular} \hspace{20pt}
\begin{tabular}{|c||c|c|c|c|c|c|c|c|}
 \hhline{|=#=|=|=|=|=|=|=|=|}
$7$ & $1$ & $2$ & $3$ &  & $3$ &  &  & \\ \hline
$7_1$ & $2$ & $1$ &  & $3$ &  & $3$ &  & \\ \hline
$7_{6}$ & $3$ &  & $1$ & $2$ &  &  & $3$ & \\ \hline
$7_{6,1}$ &  & $3$ & $2$ & $1$ &  &  &  & $3$ \\ \hline
$7_{6_5}$ & $3$ &  &  &  & $1$ & $2$ & $3$ & \\ \hline
$7_{6_5,1}$ &  & $3$ &  &  & $2$ & $1$ &  & $3$ \\ \hline
$7_{6_5,6}$ &  &  & $3$ &  & $3$ &  & $1$ & $2$ \\ \hline
$7_{6_5,6,1}$ &  &  &  & $3$ &  & $3$ & $2$ & $1$\\ \hline
\end{tabular}
\caption{Type-$3$ and type-$7$ adjacency matrices of $R_T$.}\label{tr1}
\end{table}

\begin{table}
\begin{tabular}{|c||c|c|c|c|c|c|c|c|}
 \hhline{|=#=|=|=|=|=|=|=|=|}
$3$ & $1$ & $2$ & $3$ &  & $3$ &  &  & \\ \hline
$3_2$ & $2$ & $1$ &  & $3$ &  & $3$ &  & \\ \hline
$3_{4_5}$ & $3$ &  & $1$ & $2$ &  &  & $3$ & \\ \hline
$3_{4_5,2}$ &  & $3$ & $2$ & $1$ &  &  &  & $3$ \\ \hline
$3_{4}$ & $3$ &  &  &  & $1$ & $2$ & $3$ & \\ \hline
$3_{4,2}$ &  & $3$ &  &  & $2$ & $1$ &  & $3$ \\ \hline
$3_{4,4_5}$ &  &  & $3$ &  & $3$ &  & $1$ & $2$ \\ \hline
$3_{4,4_5,2}$ &  &  &  & $3$ &  & $3$ & $2$ & $1$\\ \hline
\end{tabular} \hspace{20pt}
\begin{tabular}{|c||c|c|c|c|c|c|c|c|}
 \hhline{|=#=|=|=|=|=|=|=|=|}
$7$ & $1$ & $2$ & $3$ &  & $3$ &  &  & $2$ \\ \hline
$7_1$ & $2$ & $1$ &  & $3$ &  & $3$ & $2$ & \\ \hline
$7_{6}$ & $3$ &  & $1$ & $2$ &  & $2$ & $3$ & \\ \hline
$7_{6,1}$ &  & $3$ & $2$ & $1$ & $2$ &  &  & $3$ \\ \hline
$7_{6_5}$ & $3$ &  &  & $2$ & $1$ & $2$ & $3$ & \\ \hline
$7_{6_5,1}$ &  & $3$ & $2$ &  & $2$ & $1$ &  & $3$ \\ \hline
$7_{6_5,6}$ &  & $2$ & $3$ &  & $3$ &  & $1$ & $2$ \\ \hline
$7_{6_5,6,1}$ & $2$ &  &  & $3$ &  & $3$ & $2$ & $1$\\ \hline
\end{tabular}
\caption{Type-$3$ and type-$7$ adjacency matrices of $R_{\frac{1}{2}}$.}\label{tr2}
\end{table}

\begin{table}
\begin{tabular}{|c||c|c|c|c|c|c|c|c|}
 \hhline{|=#=|=|=|=|=|=|=|=|}
$3$ & $1$ & $2$ & $3$ &  & $3$ &  &  & \\ \hline
$3_2$ & $2$ & $1$ &  & $3$ &  & $3$ &  & \\ \hline
$3_{4_5}$ & $3$ &  & $1$ & $2$ &  & $2$ & $3$ & \\ \hline
$3_{4_5,2}$ &  & $3$ & $2$ & $1$ & $2$ &  &  & $3$ \\ \hline
$3_{4}$ & $3$ &  &  & $2$ & $1$ & $2$ & $3$ & \\ \hline
$3_{4,2}$ &  & $3$ & $2$ &  & $2$ & $1$ &  & $3$ \\ \hline
$3_{4,4_5}$ &  &  & $3$ &  & $3$ &  & $1$ & $2$ \\ \hline
$3_{4,4_5,2}$ &  &  &  & $3$ &  & $3$ & $2$ & $1$\\ \hline
\end{tabular} \hspace{20pt}
\begin{tabular}{|c||c|c|c|c|c|c|c|c|}
 \hhline{|=#=|=|=|=|=|=|=|=|}
$7$ & $1$ & $2$ & $3$ & $2$ & $3$ &  &  & \\ \hline
$7_1$ & $2$ & $1$ &  & $3$ & $2$ & $3$ &  & \\ \hline
$7_{6}$ & $3$ &  & $1$ & $2$ &  &  & $3$ & $2$ \\ \hline
$7_{6,1}$ & $2$ & $3$ & $2$ & $1$ &  &  &  & $3$ \\ \hline
$7_{6_5}$ & $3$ & $2$ &  &  & $1$ & $2$ & $3$ & \\ \hline
$7_{6_5,1}$ &  & $3$ &  &  & $2$ & $1$ & $2$ & $3$ \\ \hline
$7_{6_5,6}$ &  &  & $3$ &  & $3$ & $2$ & $1$ & $2$ \\ \hline
$7_{6_5,6,1}$ &  &  & $2$ & $3$ &  & $3$ & $2$ & $1$\\ \hline
\end{tabular}
\caption{Type-$3$ and type-$7$ adjacency matrices of $R_{\frac{1}{4}}$.}\label{tr3}
\end{table}

\begin{table}
\begin{tabular}{|c||c|c|c|c|c|c|c|c|c|c|c|c|c|c|c|c|}
\hhline{|=#=|=|=|=|=|=|=|=|=|=|=|=|=|=|=|=|}
$3$ & $1$ & $2$  & $3$  &  & $2$  &  & $3$ & &  & $3$ & $3$ & & & & & \\ \hline
$3_2$ & $2$  & $1$ &  & $2$  &   & $3$ &  & $3$ & & & & $3$ & & & & \\ \hline
$3_4$ & $3$ &  & $1$ & $3$ &  & $2$ &  & & $3$ & & $3$ & & & & & \\ \hline
$3_{1_2}$ &  & $2$ & $3$ & $1$ & $2$ &  & $3$ &  & & $3$ & $3$ & & & & & \\ \hline
$3_{1,2}$ & $2$  &  &  & $2$ & $1$ & $3$ &  & & & & & $3$ &  & $3$ & & \\ \hline
$3_{4,2}$ &  & $3$  & $2$ &  & $3$ & $1$ &  &  & & & $2$ & & $3$ & & & $3$ \\ \hline
$3_{4_5}$ & $3$ &  &  & $3$ &  &  & $1$ & $2$ & $3$ & & & & & $2$ &  & \\ \hline
$3_{4_5,2}$ &  & $3$ &  &  &  &  & $2$ & $1$ & & $2$ & & & & & $3$ & $3$ \\ \hline
$3_{4,4_5}$ &  & & $3$ & & & & $3$ & & $1$ &  & $3$ & &  & & $2$ & $2$ \\ \hline
$3_{6,4_5}$ & $3$ & & & $3$ & & & & $2$ &  & $1$ & &  & $3$ & $2$ & & \\ \hline
$3_{6_5,4}$ & $3$ & & $3$ & $3$ & & $2$ & & & $3$ & & $1$ & $2$ & & &  & \\ \hline
$3_{6_5,4,2}$ & & $3$ & & & $3$ & & & & &  & $2$ & $1$ & & & $3$ & $3$ \\ \hline
$3_{6,4,4_5}$ & & & & &  & $3$ & & &  & $3$ & & & $1$ &  & $2$ & $2$ \\ \hline
$3_{1,4_5,2}$ & & & & & $3$ & & $2$ & & & $2$ & & &  & $1$ & $3$ & $3$ \\ \hline
$3_{4.4_5,2}$ & & & & & & &  & $3$ & $2$ & &  & $3$ & $2$ & $3$ & $1$ & $3$  \\ \hline
$3_{1,4,4_5,2}$ & & & & & & $3$ & & $3$ & $2$ & & & $3$ & $2$ & $3$ & $3$ & $1$ \\ \hline
\end{tabular}

\vspace{20pt}

\begin{tabular}{|c||c|c|c|c|c|c|c|c|c|c|c|c|c|c|c|c|}
\hhline{|=#=|=|=|=|=|=|=|=|=|=|=|=|=|=|=|=|}
$7$ & $1$ & $2$ & $3$ &  & $3$ &  &  & & $2$ & & & $3$ & & & $3$ & \\ \hline
$7_1$ & $2$ & $1$ &  & $3$ &  & $3$ &  & & & $2$ & $3$ & & & & & $3$ \\ \hline
$7_{6}$ & $3$ &  & $1$ & $2$ &  &  & $3$ & & & $3$ & $2$ & & $3$ & & & \\ \hline
$7_{6,1}$ &  & $3$ & $2$ & $1$ &  &  &  & $3$ & $3$ & & & $2$ & & $3$ & & \\ \hline
$7_{6_5}$ & $3$ &  &  &  & $1$ & $2$ & $3$ & & & & $3$ & & $2$ & & & $3$ \\ \hline
$7_{6_5,1}$ &  & $3$ &  &  & $2$ & $1$ &  & $3$ & & & & $3$ & & $2$ & $3$ & \\ \hline
$7_{6_5,6}$ &  &  & $3$ &  & $3$ &  & $1$ & $2$ & $3$ & & & & & $3$ & $2$ & \\ \hline
$7_{6_5,6,1}$ &  &  &  & $3$ &  & $3$ & $2$ & $1$ & & $3$ & & & $3$ & & & $2$ \\ \hline
$7_{1_2}$ & $2$ & & & $3$ & & & $3$ & & $1$ & $2$ & $3$ & & $3$ & & & \\ \hline
$7_{1_2,1}$ & & $2$ & $3$ & & & & & $3$ & $2$ & $1$ & & $3$ & & $3$ & & \\ \hline
$7_{1_2,6}$ & & $3$ & $2$ & & $3$ & & & & $3$ & & $1$ & $2$ & & & $3$ & \\ \hline
$7_{1_2,6,1}$ & $3$ & & & $2$ & & $3$ & & & & $3$ & $2$ & $1$ & & & & $3$ \\ \hline
$7_{1_2,6_5}$ & & & $3$ & & $2$ & & & $3$ & $3$ & & & & $1$ & $2$ & $3$ & \\ \hline
$7_{1_2,6_5,1}$ & & & & $3$ & & $2$ & $3$ & & & $3$ & & & $2$ & $1$ & & $3$ \\ \hline
$7_{1_2,6_5,6}$ & $3$ & & & & & $3$ & $2$ & & & & $3$ & & $3$ & & $1$ & $2$ \\ \hline
$7_{1_2,6_5,6,1}$ & & $3$ & & & $3$ & & & $2$ & & & & $3$ & & $3$ & $2$ & $1$ \\ \hline
\end{tabular}
\caption{Type-$3$ and type-$7$ adjacency matrices of $R_{HW}$.}\label{tr4}
\end{table}

\begin{prop}
\label{loc}
    The space $R$ is a finite-volume reflectofold.
\end{prop}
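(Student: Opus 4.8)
The plan is to verify the two clauses of the statement—finite volume and the local Coxeter-polytope structure—separately, leaning on the facet and corner analysis already assembled. First I would settle the volume: by the definitions of $R_T,R_{\frac12},R_{\frac14},R_{HW}$, the space $R$ is the quotient of $P\in\{P_7,P_8\}$ by a finite set of facet identifications, each gluing two facets along a codimension-one (hence measure-zero) set; therefore $\Vol(R)=\Vol(P)$. Since $P$ is tessellated by finitely many copies of $P_0$ and $P_0$ has finite volume by Proposition \ref{riolo}, we conclude $\Vol(R)<\infty$. Completeness is inherited from $P$: away from the single ideal vertex the metric is complete because $P$ is, and the cusp neighbourhood is unaffected by the gluings, which only identify facets meeting the cusp transversally.

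Next, to see that $R$ is a reflectofold I would check, stratum by stratum, that each point has a neighbourhood isometric to a neighbourhood of a point of a hyperbolic Coxeter polytope. Interior points of the copies of $P$ are locally $\matH^n$. At a point in the relative interior of a glued facet, the gluing map is by Remark \ref{identity} the restriction of a symmetry of $P$ (resp.\ of the tessellation of $\matR^3$ in the $P_8$ case); hence the two copies of $P$ abutting the facet fit together isometrically, the glued facet becomes interior, and the point is again locally $\matH^n$. At a point in the relative interior of a type-$3$ or type-$7$ facet—these being exactly the facets left unglued, by Lemma \ref{tipi}—the local model is a closed half-space of $\matH^n$, i.e.\ a Coxeter polytope with one mirror.

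The crux is the codimension-two strata, the corners. Here I would quote the corner description directly: by Lemma \ref{corner} every corner is of type $(3,3)$, $(7,7)$ or $(3,7)$, and its dihedral angle is recorded, through Lemma \ref{facce} and Propositions \ref{G3} and \ref{G7}, in the type-$3$ and type-$7$ adjacency matrices of Tables \ref{tr1}--\ref{tr4}. Every nonzero off-diagonal entry there is an integer $k\in\{2,3\}$ and no underlined (non-submultiple) angle appears. Concretely, type-$(3,3)$ and generic type-$(7,7)$ corners inherit the submultiple angles already present in $P$, with no accumulation: type-$3$ facets meet the glued facets of type $1,6$ orthogonally by Proposition \ref{angolo} and so extend flatly across them. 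By contrast the type-$(7,i)$ ridges with $i=1,6$ yield corners whose angle is the sum of the two wedges bridged across the now-interior glued facet, namely $2\cdot\frac{\pi}{4}=\frac{\pi}{2}$ or $2\cdot\frac{\pi}{6}=\frac{\pi}{3}$. Thus every corner is a Coxeter wedge of angle $\frac{\pi}{k}$, and, for the links of the lower-dimensional strata, these are assembled from the corner data together with the Coxeter structure of $P$; since all the dihedral angles in sight are integral submultiples of $\pi$, they are honest spherical Coxeter polytopes. Hence $R$ is locally a hyperbolic Coxeter polytope, i.e.\ a reflectofold.

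I expect the main obstacle to be precisely the control of the glued facets and the lower strata—ensuring that the identifications produce locally $\matH^n$ at interior points and genuine spherical Coxeter links rather than cone-type singularities. This is exactly what Remark \ref{identity} secures, by exhibiting each gluing map as the restriction of a symmetry of the Coxeter polytope (or of the $\matR^3$-tessellation), which forces the angles around every interior codimension-two stratum to sum to $2\pi$ and makes the local combinatorics match those of $P$.
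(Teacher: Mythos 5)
Your proposal is correct and follows essentially the same route as the paper: finite volume from the tessellation of $P$ by finitely many copies of $P_0$, completeness from the gluings being restrictions of reflections/symmetries (Remark \ref{identity}), and the local Coxeter structure by checking via Lemma \ref{corner} and Tables \ref{tr1}--\ref{tr4} that every corner angle is an integral submultiple of $\pi$, with type-$(3,7)$ corners handled by Proposition \ref{angolo}. Your treatment of the interior glued-facet points and the doubled $(7,i)$ wedge angles is somewhat more explicit than the paper's, but it is the same argument.
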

\begin{proof}
    The dihedral angles at the type-$(3,3)$ and type-$(7,7)$ corners of $R$ are all of the form $\frac{\pi}{k}$ since in Tables \ref{tr1}, \ref{tr2}, \ref{tr3} and \ref{tr4} there are not underlined labels.
    Every type-$(3,7)$ ridge of $P$ has dihedral angle $\frac{\pi}{2}$ by Proposition \ref{angolo} (since the dihedral angle between $\textbf{3}$ and $\textbf{7}$ in $P_0$ is $\frac{\pi}{2}$). Hence, by Lemma \ref{corner}, also every type-$(3,7)$ corner of $R$ has dihedral angle $\frac{\pi}{2}$. By Lemma \ref{corner}, this runs out all the corners of $R$.

    We show that $R$ is locally a Coxeter polytope. The faces of $P$ induce a natural stratification of $R$ in closed strata. We have that $R$ is locally modelled on $\matH^n$ near the non-compact strata and far from the compact strata, since its end is isometric to a cusp (with section a flat, closed manifold) by construction. We have that $R$ is locally a Coxeter polytope near the compact strata since we have proved that the angle corresponding to the corners are in the form $\frac{\pi}{k}$.
    
    Moreover, $R$ is complete by construction, since we glued using reflections through copies of the facets of $P_0$. Hence $R$ is a reflectofold.
    Finally, the polytope $P$ is tessellated into a finite number of copies of $P_0$, which has finite volume, hence also $R$ has finite volume.
\end{proof}

\begin{prop}
\label{cusp}
    The reflectofold $R$ is $1$-cusped, has compact, non-empty boundary and is orientable. Moreover, the cusp of $R_T, R_{\frac{1}{2}},R_{\frac{1}{4}}, R_{HW}$ has section, the $3$-torus, the $\frac{1}{2}$-twist manifold, the $\frac{1}{4}$-twist manifold, the Hantzsche-Wendt manifold, respectively.
\end{prop}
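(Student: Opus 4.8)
The plan is to verify the four claims---exactly one cusp, compact and non-empty boundary, orientability, and the identification of the cusp section---separately, using the descriptions of the facets and corners of $R$ already obtained.

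First I would pin down the cusp. By Proposition \ref{riolo} the polytope $P$ (namely $P_7$ for $R_T, R_{\frac{1}{2}}, R_{\frac{1}{4}}$ and $P_8$ for $R_{HW}$) has a single ideal vertex $V$, and the facets of its link $L$ correspond bijectively to the non-compact facets of $P$; in particular the compact facets (those of type $3$ and $7$) do not meet a small horosphere centred at $V$. Consequently the cross-section of the cusp of $R$ is exactly $L$ with its facets identified by the restrictions to $L$ of the gluing maps defining $R$. Figures \ref{f15} and \ref{f17} show that these restrictions realize the identification schemes of Figures \ref{f14} and \ref{f16}, which by \cite[Figure 12.2]{MA} yield the $3$-torus, the $\frac{1}{2}$-twist, the $\frac{1}{4}$-twist and the Hantzsche--Wendt manifold, respectively. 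As each of these is a connected closed flat $3$-manifold, $R$ has a single cusp with the asserted section.

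Next I would treat the boundary. By Lemma \ref{tipi} every facet of $R$ is the image under $p$ of a type-$7$ facet or of a union of type-$3$ facets of $P$. Facets of types $3$ and $7$ are tiled by copies of the facets $\textbf{3}$ and $\textbf{7}$ of $P_0$, which are its only compact facets; hence such facets are compact, and so are their finitely many $p$-images. Therefore $\partial R$ is compact, and it is non-empty since $P$ has facets of type $7$ (for instance $7$). For orientability I would orient $\mathrm{int}(P)$ by the orientation of $\matH^4$ and observe that each gluing map is a composition of an odd number of reflections (one for $r_6,r_{6_5},r_1,r_{1_2}$; three for $r_1\circ r_6\circ r_{6_5}$, $r_1\circ r_6\circ r_5$, $r_{1_2}\circ r_{6_5}\circ r_6$; five for $r_6\circ(r_1\circ r_2)^2$), hence an orientation-reversing isometry $g$ of $\matH^4$. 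By Remark \ref{identity} each $g$ arises from an ambient symmetry relating the identified facets $F$ and $F'=g(F)$ as faces of $P$, so $g$ sends the outward normal of $F$ to that of $F'$; being orientation-reversing on $\matH^4$ while preserving the outward-normal direction, $g\colon F\to F'$ reverses the induced boundary orientations. By the usual criterion for paired gluings of an oriented manifold with boundary, the orientation of $P$ descends to $R$.

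The step I expect to require the most care is the orientability bookkeeping for $R_{HW}$, where the facet $6_{4,5}$ is folded onto itself through its two halves $6_{4,5}^U$ and $6_{4,5}^D$ (and likewise $6_{6_5,4,5}$). There one must confirm that $g=r_6\circ(r_1\circ r_2)^2$ preserves the supporting hyperplane of the facet and exchanges the two halves from the side occupied by $P$, so that the outward-normal-to-outward-normal computation---and thus the orientation-reversing-on-the-boundary conclusion---still applies. This is exactly what the second paragraph of Remark \ref{identity} provides, by identifying $g$ with a symmetry of the tessellation of $\matR^3$ into copies of $L_0$; granting this, orientability follows as in the paired case and the proof of Proposition \ref{cusp} is complete.
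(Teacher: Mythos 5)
Your proposal is correct, and for the cusp section and the compactness/non-emptiness of the boundary it follows the same route as the paper: the cusp is identified by observing that the gluing maps restrict on the link to the identification schemes of Figures \ref{f14} and \ref{f16}, and the boundary is the image of the compact type-$3$ and type-$7$ facets. Where you genuinely diverge is orientability. The paper disposes of it in one line: $R$ is homeomorphic to $E\times[0,+\infty)$ with $E$ the (orientable) cusp section, so $R$ is orientable. Your argument is instead a local, facet-by-facet verification of the standard gluing criterion: each gluing map is a composition of an odd number of reflections, hence orientation-reversing on $\matH^4$, and by Remark \ref{identity} it carries outward normal to outward normal, so it reverses the induced boundary orientations and the orientation of $P$ descends. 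Both are valid; the paper's version is shorter but leans on the global product structure of $R$ asserted (without detailed proof) in the introduction, whereas yours is self-contained, makes no topological claim beyond the gluing data, and has the merit of explicitly handling the delicate self-folded facets $6_{4,5}$ and $6_{6_5,4,5}$ of $R_{HW}$, where the gluing isometry preserves the supporting hyperplane and the side occupied by $P_8$ rather than being a symmetry of $P_8$ itself. The price is the parity and normal-direction bookkeeping, which you carry out correctly.
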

\begin{proof}
    The boundary of $R$ is the image of the union of the facets of $P$ that we do not glue. Since these facets are of type $3$ or $7$, that are compact, the boundary of $R$ is compact (and non-empty).

    Since by construction we glued $P$ in a way that this induces a gluing of the link $L_7$ (of the only ideal vertex) to form the $3$-torus, the $\frac{1}{2}$-twist manifold, the $\frac{1}{4}$-twist manifold, the space $R$ has exaclty one cusp with the requested section.

    Finally, the space $R$ is orientable, since it is homeomorphic to $E\times [0,1)$, where $E$ is the cusp section, which is orientable.
\end{proof}

\begin{prop}
\label{proprieta}
    The reflectofold $R$ is developable.
\end{prop}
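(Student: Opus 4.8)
The plan is to check the two defining conditions \ref{EF} and \ref{AC} of developability directly, organizing the verification around the trichotomy of Lemma \ref{corner}: every corner of $R$ is of type $(3,3)$, $(7,7)$ or $(3,7)$. For each of these three families I will exhibit the two distinct facets containing a given corner (condition \ref{EF}) and check that all corners between a fixed pair of facets carry the same dihedral angle (condition \ref{AC}).

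I would dispatch the $(3,7)$ corners first, as they require no bookkeeping. By Lemma \ref{corner} such a corner is the image through $p$ of a type-$(3,7)$ ridge, and by Proposition \ref{angolo} its dihedral angle equals the one between $\textbf{3}$ and $\textbf{7}$ in $P_0$, namely $\frac{\pi}{2}$. It therefore lies in the intersection of one type-$3$ facet and one type-$7$ facet, which are distinct since they are of different types; this gives \ref{EF}. Since every $(3,7)$ corner carries the same angle $\frac{\pi}{2}$, condition \ref{AC} is immediate for any pair formed by a type-$3$ and a type-$7$ facet.

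For the $(3,3)$ and $(7,7)$ corners I would read everything off the corner graphs. By Propositions \ref{G3} and \ref{G7} the corner graph $G_3$ is a spanning subgraph of $\widetilde{G_3}$ and $G_7=\widetilde{G_7}$, and in each of them any several edges joining a fixed pair of vertices carry a common label. The two structural facts I need about $\widetilde{G_3}$ and $\widetilde{G_7}$ have already been recorded: these graphs have no loops, and multiple edges between the same pair of vertices carry equal labels. The no-loop property transfers to $G_3$ and $G_7$ and says precisely that each $(3,3)$ corner (resp.\ $(7,7)$ corner) is contained in the intersection of two \emph{distinct} type-$3$ (resp.\ type-$7$) facets, which is \ref{EF}; note that this handles also the $(7,7)$ corners arising from type-$(7,i)$ ridges ($i=1,6$), since the two type-$7$ facets involved are distinct by Lemma \ref{tipi}. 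The equal-label property says that all corners between a fixed pair of facets of the same type share one dihedral angle, which is \ref{AC}. Concretely, both facts are encoded in the type-$3$ and type-$7$ adjacency matrices of $R$ in Tables \ref{tr1}--\ref{tr4}, whose very well-definedness as symmetric matrices with a single off-diagonal label per pair expresses exactly what is required.

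The only genuine work, and hence the main obstacle, is the verification of these two combinatorial properties — absence of loops and consistency of labels in $\widetilde{G_3}$ and $\widetilde{G_7}$. This is not conceptual but a careful inspection of the data: one cross-checks the adjacency matrices of $P_7$ and $P_8$ (Tables \ref{t7}, \ref{t8}) against the equivalence classes and the type-$(7,i)$ corner lists of Lemma \ref{facce}, separately for each of the four gluings $R_T$, $R_{\frac{1}{2}}$, $R_{\frac{1}{4}}$, $R_{HW}$. Once this inspection is complete — equivalently, once one observes that Tables \ref{tr1}--\ref{tr4} are genuinely well-defined, with $1$'s on the diagonal and no underlined or conflicting off-diagonal entries — both \ref{EF} and \ref{AC} hold, and therefore $R$ is developable.
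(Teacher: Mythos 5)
Your proposal is correct and follows essentially the same route as the paper: both arguments reduce \ref{EF} to the absence of loops in the corner graphs $G_3$ and $G_7$ (via Propositions \ref{G3} and \ref{G7} and the already-verified structure of $\widetilde{G_3}$, $\widetilde{G_7}$), and reduce \ref{AC} to the label-consistency of multiple edges together with the observation that every type-$(3,7)$ corner has angle $\frac{\pi}{2}$ by Proposition \ref{angolo} and Lemma \ref{corner}. Your explicit remark that \ref{EF} for the $(3,7)$ corners is automatic because the two facets have different types is a small clarification the paper leaves implicit, but the substance is identical.
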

\begin{proof}
    Since the graph $\widetilde{G_i}$ has no loops, also the corner graph $G_i$ has no loops, for $i=3,7$, by Proposition \ref{G3} and Proposition \ref{G7}. Hence $R$ satisfies \ref{EF}.

    Moreover, $R$ satisfies \ref{AC}:
    \begin{itemize}
        \item If two facets $F$ and $G$ both of type $3$ (or $7$) intersect, then the dihedral angles of all the corners in $F \cap G$ coincide; indeed, as already stated, in $\widetilde{G_3}$ (or $\widetilde{G_7}$), and hence in $G_3$ (or $G_7$), if two vertices have more then one edge connecting them, all these edges have the same label.
        \item Every type-$(3,7)$ ridge of $P$ has dihedral angle $\frac{\pi}{2}$ by Proposition \ref{angolo} (since the dihedral angle between $\textbf{3}$ and $\textbf{7}$ in $P_0$ is $\frac{\pi}{2}$). Hence, by Lemma \ref{corner}, also every type-$(3,7)$ corner of $R$ has dihedral angle $\frac{\pi}{2}$.
    \end{itemize}
\end{proof}

Putting together Proposition \ref{cusp}, Proposition \ref{loc} and Proposition \ref{proprieta}, we have proved Theorem \ref{Rbello}.
Putting together Theorem \ref{Rbello} and Corollary \ref{maincor}, we have proved Theorem \ref{thm:main}.





\section*{Tables and Figures}\label{appx}
For reasons of space, we collect here the information \ref{item:I2} on $P_n$ for $n=0,\dots,8$, and the information \ref{item:I3} on $P_n$ for $n=2,\dots,8$.


\begin{table}
\centering
\begin{minipage}{.5\textwidth}
\centering
\begin{tabular}{|c||c|}
\hhline{|=#=|}
 $3$ & $1$ \\ \hline
\end{tabular} \hspace{20pt}
\begin{tabular}{|c||c|}
\hhline{|=#=|}
 $7$ & $1$ \\ \hline
\end{tabular}
\caption{Type-$3$ and type-$7$ adjacency matrices of $P_0$.} \label{t0}
\end{minipage}%
\begin{minipage}{.5\textwidth}
\centering
\begin{tabular}{|c||c|}
\hhline{|=#=|}
 $3$ & $1$ \\ \hline
\end{tabular} \hspace{20pt}
\begin{tabular}{|c||c|}
\hhline{|=#=|}
 $7$ & $1$ \\ \hline
\end{tabular}
\caption{Type-$3$ and type-$7$ adjacency matrices of $P_1$.} \label{t1}
\end{minipage}
\end{table}

\begin{figure}
\includegraphics{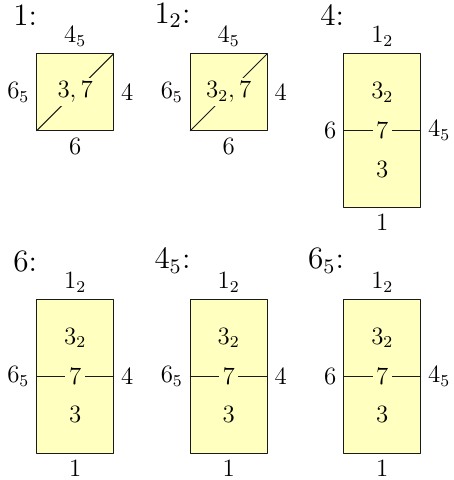}
    \caption{The facets of $L_2$.}\label{f7}
\end{figure}

\begin{table}
\centering
\begin{minipage}{.5\textwidth}
\centering
\begin{tabular}{|c||c|c|}
\hhline{|=#=|=|}
 $3$ & $1$ & $2$\\ \hline
 $3_2$ & $2$ & $1$\\ \hline
\end{tabular} \hspace{20pt}
\begin{tabular}{|c||c|}
\hhline{|=#=|}
 $7$ & $1$ \\ \hline
\end{tabular}
\caption{Type-$3$ and type-$7$ adjacency matrices of $P_2$.}\label{t2}
\end{minipage}%
\begin{minipage}{.5\textwidth}
\centering
\begin{tabular}{|c||c|c|c|c|}
\hhline{|=#=|=|=|=|}
$3$ & $1$ & $2$ & $3$ & \\ \hline
$3_2$ & $2$ & $1$ &  & $3$\\ \hline
$3_{4_5}$ & $3$ &  & $1$ & $2$\\ \hline
$3_{4_5,2}$ &  & $3$ & $2$ & $1$\\ \hline
\end{tabular} \hspace{20pt}
\begin{tabular}{|c||c|}
\hhline{|=#=|}
 $7$ & $1$ \\ \hline
\end{tabular}
\caption{Type-$3$ and type-$7$ adjacency matrices of $P_3$.}\label{t3}
\end{minipage}
\end{table}

\begin{figure}
\includegraphics{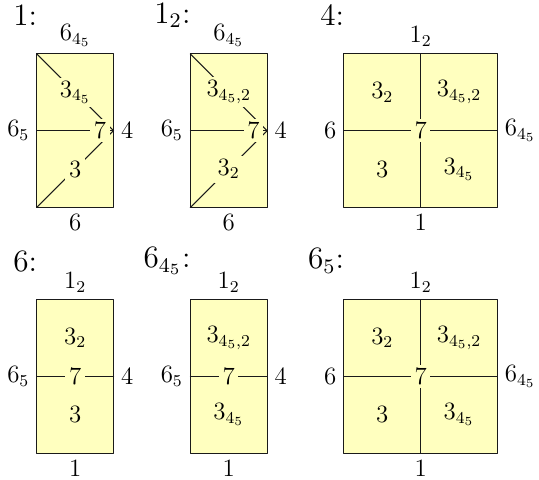}
    \caption{The facets of $L_3$}\label{f8}
\end{figure}

\begin{table}
\begin{tabular}{|c||c|c|c|c|c|c|c|c|}
\hhline{|=#=|=|=|=|=|=|=|=|}
$3$ & $1$ & $2$ & $3$ &  & $3$ &  &  & \\ \hline
$3_2$ & $2$ & $1$ &  & $3$ &  & $3$ &  & \\ \hline
$3_{4_5}$ & $3$ &  & $1$ & $2$ &  &  & $3$ & \\ \hline
$3_{4_5,2}$ &  & $3$ & $2$ & $1$ &  &  &  & $3$\\ \hline
$3_4$ & $3$ &  &  &  & $1$ & $2$ & $3$ & \\ \hline
$3_{4,2}$ &  & $3$ &  &  & $2$ & $1$ &  & $3$\\ \hline
$3_{4,4_5}$ &  &  & $3$ &  & $3$ &  & $1$ & $2$\\ \hline
$3_{4,4_5,2}$ &  &  &  & $3$ &  & $3$ & $2$ & $1$\\ \hline
\end{tabular} \hspace{20pt}
\begin{tabular}{|c||c|}
\hhline{|=#=|}
 $7$ & $1$ \\ \hline
\end{tabular}
\caption{Type-$3$ and type-$7$ adjacency matrices of $P_4$.}\label{t4}
\end{table}

\begin{figure}
\includegraphics{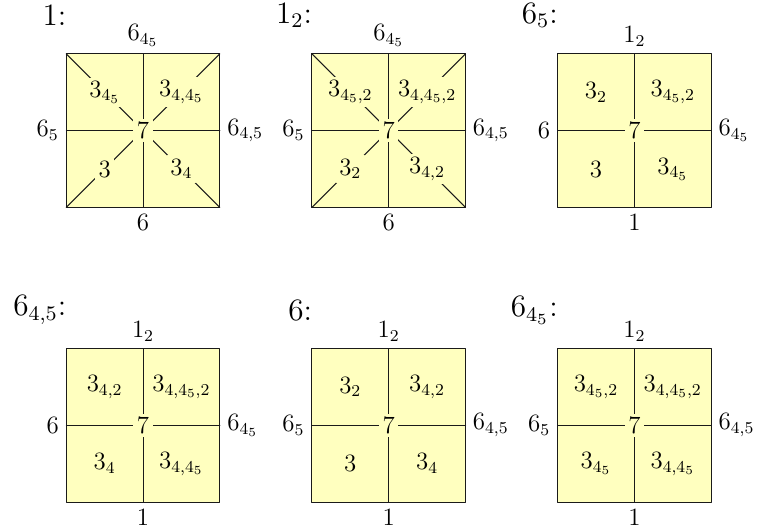}
    \caption{The facets of $L_4$.}\label{f9}
\end{figure}

\begin{table}
\begin{tabular}{|c||c|c|c|c|c|c|c|c|c|c|c|c|}
\hhline{|=#=|=|=|=|=|=|=|=|=|=|=|=|}
$3$ & $1$ & $2$ & $3$ &  & $3$ &  &  &  & $3$ &  &  & \\ \hline
$3_2$ & $2$ & $1$ &  & $3$ &  & $3$ &  &  &  &  &  & \\ \hline
$3_{4_5}$ & $3$ &  & $1$ & $2$ &  &  & $3$ &  &  & $3$ &  & \\ \hline
$3_{4_5,2}$ &  & $3$ & $2$ & $1$ &  &  &  & $3$ &  &  &  & \\ \hline
$3_4$ & $3$ &  &  &  & $1$ & $2$ & $3$ &  &  &  & $2$ & \\ \hline
$3_{4,2}$ &  & $3$ &  &  & $2$ & $1$ &  & $3$ &  &  &  & \\ \hline
$3_{4,4_5}$ &  &  & $3$ &  & $3$ &  & $1$ & $2$ &  &  &  & $2$\\ \hline
$3_{4,4_5,2}$ &  &  &  & $3$ &  & $3$ & $2$ & $1$ &  &  &  & \\ \hline
$3_{1,2}$ & $3$ &  &  &  &  &  &  &  & $1$ & $3$ & $3$ & \\ \hline
$3_{1,4_5,2}$ &  &  & $3$ &  &  &  &  &  & $3$ & $1$ &  & $3$\\ \hline
$3_{1,4,2}$ &  &  &  &  & $2$ &  &  &  & $3$ &  & $1$ & $3$\\ \hline
$3_{1,4,4_5,2}$ &  &  &  &  &  &  & $2$ &  &  & $3$ & $3$ & $1$\\ \hline
\end{tabular} \hspace{20pt}
\begin{tabular}{|c||c|c|}
 \hhline{|=#=|=|}
 $7$ & $1$ & $2$\\ \hline
 $7_1$ & $2$ & $1$\\ \hline
\end{tabular}
\caption{Type-$3$ and type-$7$ adjacency matrices of $P_5$.}\label{t5}
\end{table}

\begin{figure}
\includegraphics{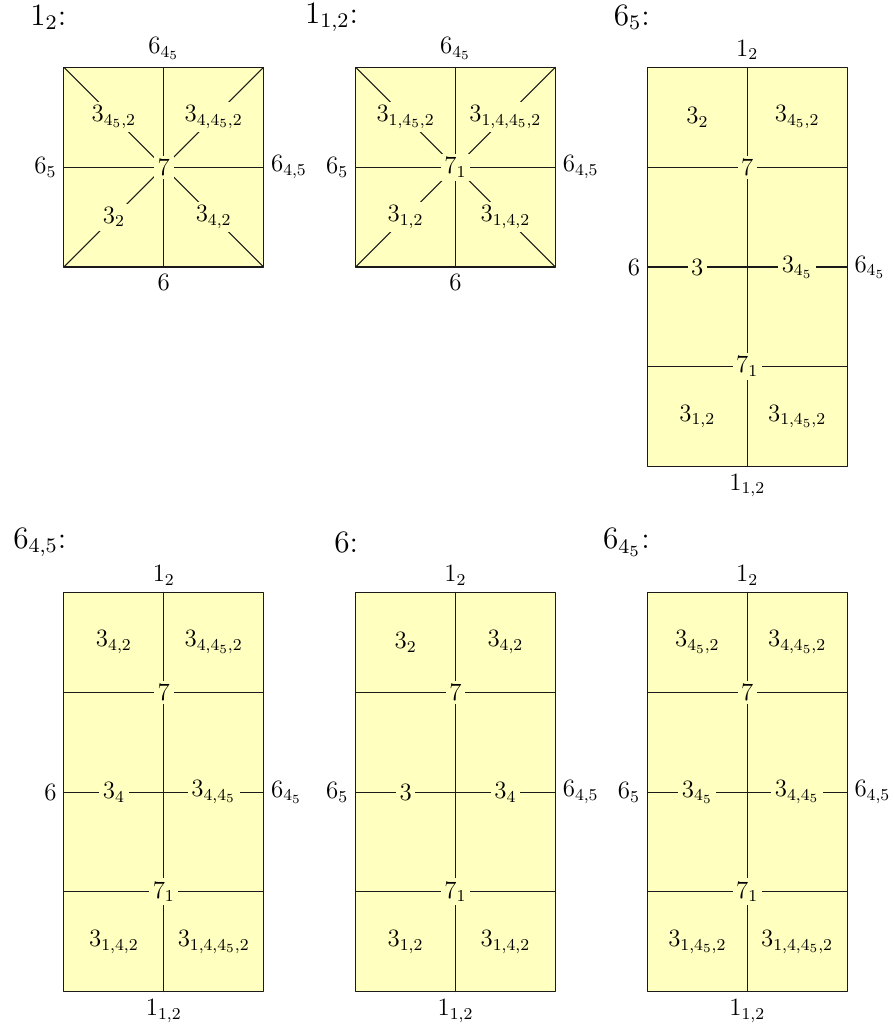}
    \caption{The facets of $L_5$.}\label{f10}
\end{figure}

\begin{table}
\begin{tabular}{|c||c|c|c|c|c|c|c|c|c|c|c|c|c|c|c|c|c|c|}
\hhline{|=#=|=|=|=|=|=|=|=|=|=|=|=|=|=|=|=|=|=|}
$3$ & $1$ & $2$ & $3$ &  & $3$ &  &  &  & $3$ &  &  &  & $3$ &  &  &  &  & \\ \hline
$3_2$ & $2$ & $1$ &  & $3$ &  & $3$ &  &  &  &  &  &  &  & $3$ &  &  &  & \\ \hline
$3_{4_5}$ & $3$ &  & $1$ & $2$ &  &  & $3$ &  &  & $3$ &  &  &  &  &  &  &  & \\ \hline
$3_{4_5,2}$ &  & $3$ & $2$ & $1$ &  &  &  & $3$ &  &  &  &  &  &  &  &  &  & \\ \hline
$3_4$ & $3$ &  &  &  & $1$ & $2$ & $3$ &  &  &  & $2$ &  &  &  & $3$ &  &  & \\ \hline
$3_{4,2}$ &  & $3$ &  &  & $2$ & $1$ &  & $3$ &  &  &  &  &  &  &  & $3$ &  & \\ \hline
$3_{4,4_5}$ &  &  & $3$ &  & $3$ &  & $1$ & $2$ &  &  &  & $2$ &  &  &  &  &  & \\ \hline
$3_{4,4_5,2}$ &  &  &  & $3$ &  & $3$ & $2$ & $1$ &  &  &  &  &  &  &  &  &  & \\ \hline
$3_{1,2}$ & $3$ &  &  &  &  &  &  &  & $1$ & $3$ & $3$ &  &  &  &  &  & $3$ & \\ \hline
$3_{1,4_5,2}$ &  &  & $3$ &  &  &  &  &  & $3$ & $1$ &  & $3$ &  &  &  &  &  & \\ \hline
$3_{1,4,2}$ &  &  &  &  & $2$ &  &  &  & $3$ &  & $1$ & $3$ &  &  &  &  &  & $3$\\ \hline
$3_{1,4,4_5,2}$ &  &  &  &  &  &  & $2$ &  &  & $3$ & $3$ & $1$ &  &  &  &  &  & \\ \hline
$3_{6,4_5}$ & $3$ &  &  &  &  &  &  &  &  &  &  &  & $1$ & $2$ & $3$ &  & $3$ & \\ \hline
$3_{6,4_5,2}$ &  & $3$ &  &  &  &  &  &  &  &  &  &  & $2$ & $1$ &  & $3$ &  & \\ \hline
$3_{6,4,4_5}$ &  &  &  &  & $3$ &  &  &  &  &  &  &  & $3$ &  & $1$ & $2$ &  & $3$\\ \hline
$3_{6,4,4_5,2}$ &  &  &  &  &  & $3$ &  &  &  &  &  &  &  & $3$ & $2$ & $1$ &  & \\ \hline
$3_{6,1,4_5,2}$ &  &  &  &  &  &  &  &  & $3$ &  &  &  & $3$ &  &  &  & $1$ & $3$\\ \hline
$3_{6,1,4,4_5,2}$ &  &  &  &  &  &  &  &  &  &  & $3$ &  &  &  & $3$ &  & $3$ & $1$\\ \hline
\end{tabular} \hspace{20pt}
\begin{tabular}{|c||c|c|c|c|}
 \hhline{|=#=|=|=|=|}
$7$ & $1$ & $2$ & $3$ & \\ \hline
$7_1$ & $2$ & $1$ &  & $3$\\ \hline
$7_{6}$ & $3$ &  & $1$ & $2$\\ \hline
$7_{6,1}$ &  & $3$ & $2$ & $1$\\ \hline
\end{tabular}
\caption{Type-$3$ and type-$7$ adjacency matrices of $P_6$.}\label{t6}
\end{table}

\begin{figure}
\includegraphics{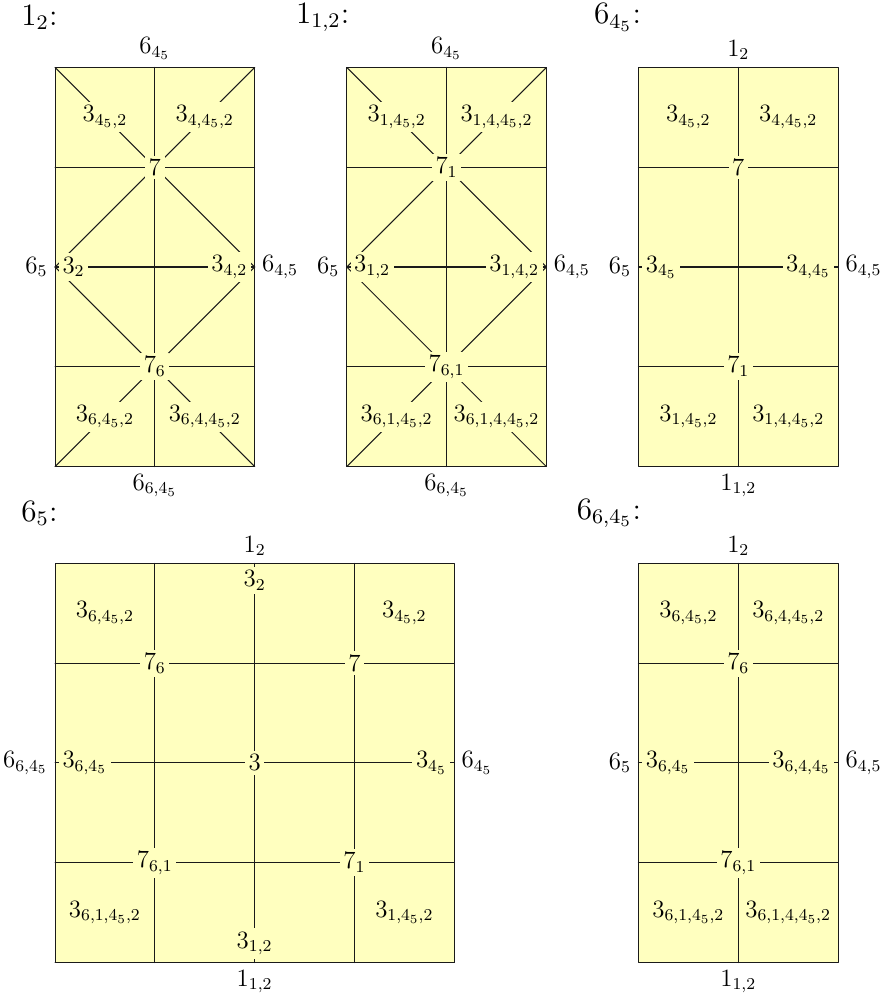}
    \caption{Some facets of $L_6$.}\label{f11}
\end{figure}

\begin{figure}
\includegraphics{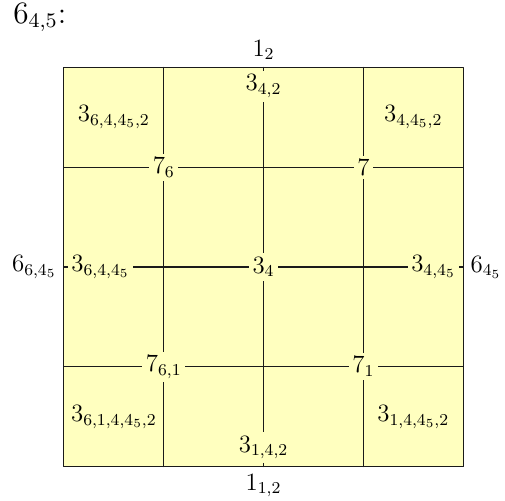}
    \caption{A facet of $L_6$.}\label{f11.1}
\end{figure}

\begin{table}
\scalebox{0.9}{
\begin{tabular}{|c||c|c|c|c|c|c|c|c|c|c|c|c|c|c|c|c|c|c|c|c|c|c|c|c|c|c|c|}
\hhline{|=#=|=|=|=|=|=|=|=|=|=|=|=|=|=|=|=|=|=|=|=|=|=|=|=|=|=|=|}
$3$ & $1$ & $2$ & $3$ &  & $3$ &  &  &  & $3$ &  &  &  & $3$ &  &  &  &  & & $3$ & & & & & & & & \\ \hline
$3_2$ & $2$ & $1$ &  & $3$ &  & $3$ &  &  &  &  &  &  &  & $3$ &  &  &  &  & & $3$ & & & & & & & \\ \hline
$3_{4_5}$ & $3$ &  & $1$ & $2$ &  &  & $3$ &  &  & $3$ &  &  &  &  &  &  &  & & & & $3$ & & & & & & \\ \hline
$3_{4_5,2}$ &  & $3$ & $2$ & $1$ &  &  &  & $3$ &  &  &  &  &  &  &  &  &  & & & & & $3$ & & & & & \\ \hline
$3_4$ & $3$ &  &  &  & $1$ & $2$ & $3$ &  &  &  & $2$ &  &  &  & $3$ &  &  & & & & & & & & & & \\ \hline
$3_{4,2}$ &  & $3$ &  &  & $2$ & $1$ &  & $3$ &  &  &  &  &  &  &  & $3$ &  & & & & & & & & & & \\ \hline
$3_{4,4_5}$ &  &  & $3$ &  & $3$ &  & $1$ & $2$ &  &  &  & $2$ &  &  &  &  &  & & & & & & & & & & \\ \hline
$3_{4,4_5,2}$ &  &  &  & $3$ &  & $3$ & $2$ & $1$ &  &  &  &  &  &  &  &  &  & & & & & & & & & & \\ \hline
$3_{1,2}$ & $3$ &  &  &  &  &  &  &  & $1$ & $3$ & $3$ &  &  &  &  &  & $3$ & & & & & & $3$ & & & & \\ \hline
$3_{1,4_5,2}$ &  &  & $3$ &  &  &  &  &  & $3$ & $1$ &  & $3$ &  &  &  &  &  & & & & & & & $3$ & & & \\ \hline
$3_{1,4,2}$ &  &  &  &  & $2$ &  &  &  & $3$ &  & $1$ & $3$ &  &  &  &  &  & $3$ & & & & & & & & &\\ \hline
$3_{1,4,4_5,2}$ &  &  &  &  &  &  & $2$ &  &  & $3$ & $3$ & $1$ &  &  &  &  &  & & & & & & & & & & \\ \hline
$3_{6,4_5}$ & $3$ &  &  &  &  &  &  &  &  &  &  &  & $1$ & $2$ & $3$ &  & $3$ & & & & & & & & $3$ & & \\ \hline
$3_{6,4_5,2}$ &  & $3$ &  &  &  &  &  &  &  &  &  &  & $2$ & $1$ &  & $3$ &  & & & & & & & & & $3$ & \\ \hline
$3_{6,4,4_5}$ &  &  &  &  & $3$ &  &  &  &  &  &  &  & $3$ &  & $1$ & $2$ &  & $3$ & & & & & & & & &\\ \hline
$3_{6,4,4_5,2}$ &  &  &  &  &  & $3$ &  &  &  &  &  &  &  & $3$ & $2$ & $1$ &  & & & & & & & & & & \\ \hline
$3_{6,1,4_5,2}$ &  &  &  &  &  &  &  &  & $3$ &  &  &  & $3$ &  &  &  & $1$ & $3$ & & & & & & & & & $3$\\ \hline
$3_{6,1,4,4_5,2}$ &  &  &  &  &  &  &  &  &  &  & $3$ &  &  &  & $3$ &  & $3$ & $1$ & & & & & & & & &\\ \hline
$3_{6_5,4}$ & $3$ & & & & & & & & & & & & & & & & & & $1$ & $2$ & $3$ & & $2$ & & $3$ & & \\ \hline
$3_{6_5,4,2}$ & & $3$ & & & & & & & & & & & & & & & & & $2$ & $1$ & & $3$ & & & & $3$ & \\ \hline
$3_{6_5,4,4_5}$ & & & $3$ & & & & & & & & & & & & & & & & $3$ & & $1$ & $2$ & & $2$ & & & \\ \hline
$3_{6_5,4,4_5,2}$ & & & & $3$ & & & & & & & & & & & & & & & & $3$ & $2$ & $1$ & & & & & \\ \hline
$3_{6_5,1,4,2}$ & & & & & & & & & $3$ & & & & & & & & & & $2$ & & & & $1$ & $3$ & & & $3$ \\ \hline
$3_{6_5,1,4,4_5,2}$ & & & & & & & & & & $3$ & & & & & & & & & & & $2$ & & $3$ & $1$ & & & \\ \hline
$3_{6_5,6,4,4_5}$ & & & & & & & & & & & & & $3$ & & & & & & $3$ & & & & & & $1$ & $2$ & $3$ \\ \hline
$3_{6_5,6,4,4_5,2}$ & & & & & & & & & & & & & & $3$ & & & & & & $3$ & & & & & $2$ & $1$ & \\ \hline
$3_{6_5,6,1,4,4_5,2}$ & & & & & & & & & & & & & & & & & $3$ & & & & & & $3$ & & $3$ & & $1$ \\ \hline
\end{tabular}
}

\vspace{20pt}

\begin{tabular}{|c||c|c|c|c|c|c|c|c|}
 \hhline{|=#=|=|=|=|=|=|=|=|}
$7$ & $1$ & $2$ & $3$ &  & $3$ &  &  & \\ \hline
$7_1$ & $2$ & $1$ &  & $3$ &  & $3$ &  & \\ \hline
$7_{6}$ & $3$ &  & $1$ & $2$ &  &  & $3$ & \\ \hline
$7_{6,1}$ &  & $3$ & $2$ & $1$ &  &  &  & $3$\\ \hline
$7_{6_5}$ & $3$ &  &  &  & $1$ & $2$ & $3$ & \\ \hline
$7_{6_5,1}$ &  & $3$ &  &  & $2$ & $1$ &  & $3$\\ \hline
$7_{6_5,6}$ &  &  & $3$ &  & $3$ &  & $1$ & $2$\\ \hline
$7_{6_5,6,1}$ &  &  &  & $3$ &  & $3$ & $2$ & $1$\\ \hline
\end{tabular}
\caption{Type-$3$ and type-$7$ adjacency matrices of $P_7$.}\label{t7}
\end{table}

\begin{figure}
\includegraphics{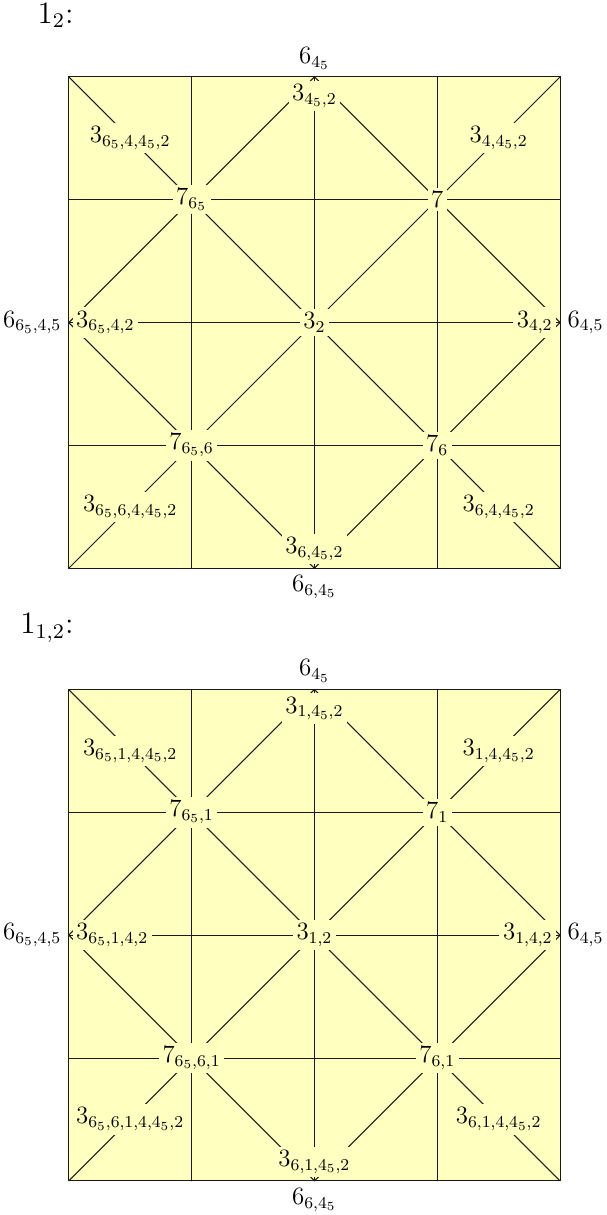}
    \caption{Some facets of $L_7$.}\label{f12}
\end{figure}

\begin{figure}
\includegraphics{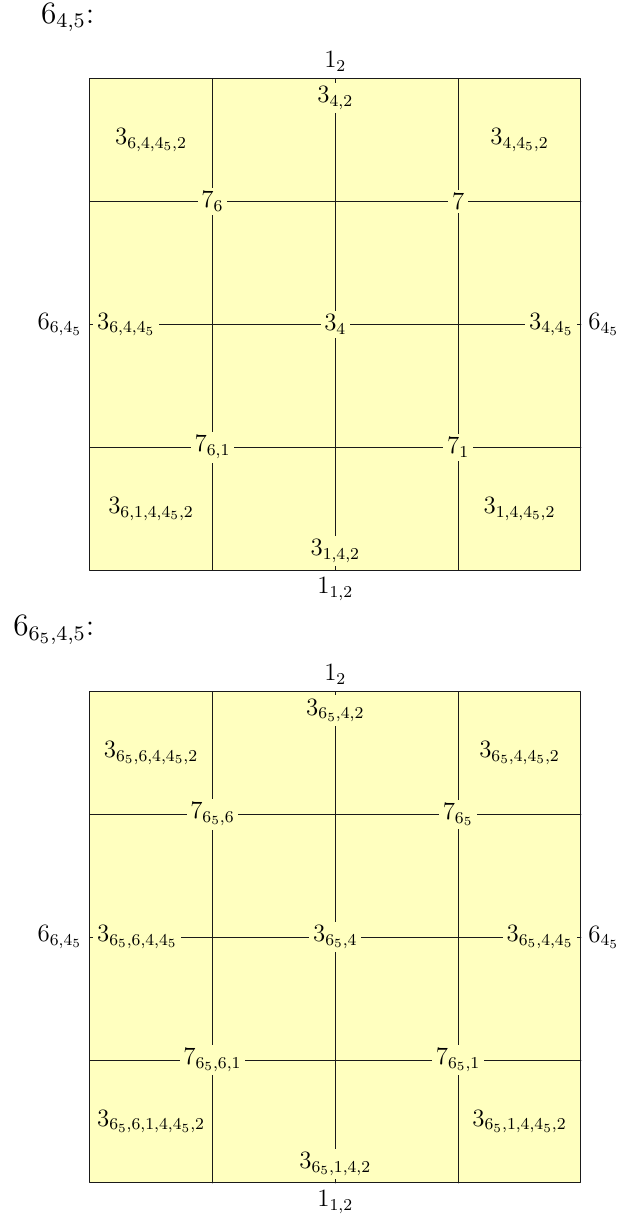}
    \caption{Some facets of $L_7$.}\label{f12.1}
\end{figure}

\begin{figure}
\includegraphics{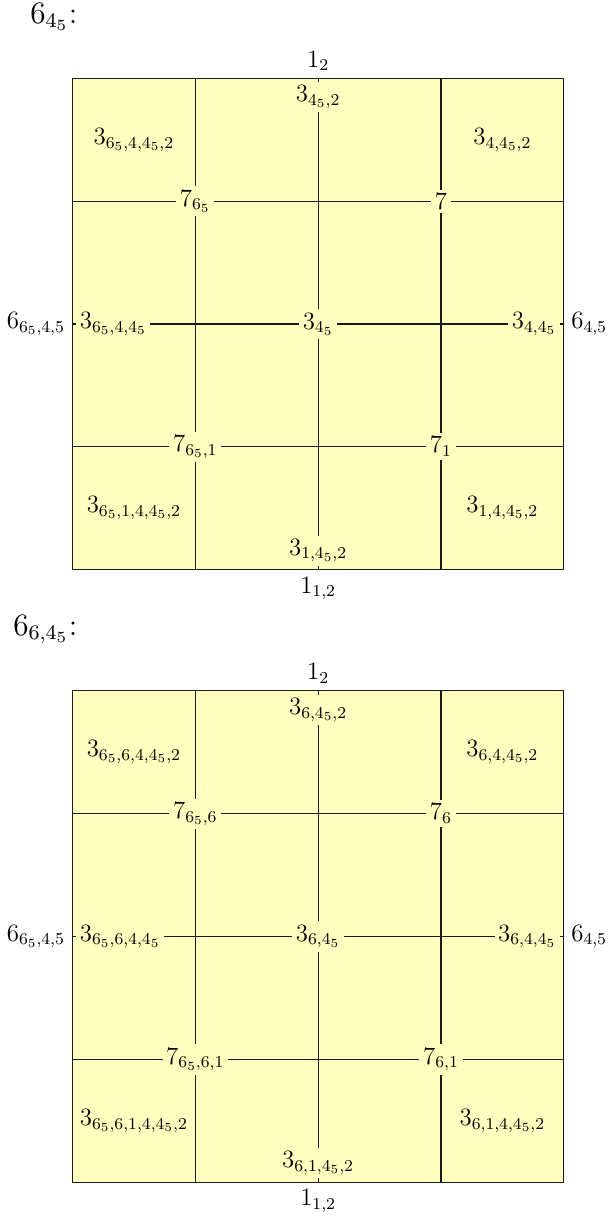}
    \caption{Some facets of $L_7$.}\label{f12.2}
\end{figure}

\begin{table}
\scalebox{0.58}{
\begin{tabular}{|c||c|c|c|c|c|c|c|c|c|c|c|c|c|c|c|c|c|c|c|c|c|c|c|c|c|c|c|c|c|c|c|c|c|c|c|c|c|c|c|c|c|c|c|c|c|}
\hhline{|=#=|=|=|=|=|=|=|=|=|=|=|=|=|=|=|=|=|=|=|=|=|=|=|=|=|=|=|=|=|=|=|=|=|=|=|=|=|=|=|=|=|=|=|=|=|}
$3$ & $1$ & $2$ & $3$ &  & $3$ &  &  &  & $3$ &  &  &  & $3$ &  &  &  &  & & $3$ & & & & & & & & & & & & & & & & & & & & & & & & & & \\ \hline
$3_2$ & $2$ & $1$ &  & $3$ &  & $3$ &  &  &  &  &  &  &  & $3$ &  &  &  &  & & $3$ & & & & & & & & $2$ & & & & & & & & & & & & & & & & & \\ \hline
$3_{4_5}$ & $3$ &  & $1$ & $2$ &  &  & $3$ &  &  & $3$ &  &  &  &  &  &  &  & & & & $3$ & & & & & & & & & & & & & & & & & & & & & & & & \\ \hline
$3_{4_5,2}$ &  & $3$ & $2$ & $1$ &  &  &  & $3$ &  &  &  &  &  &  &  &  &  & & & & & $3$ & & & & & & & $2$ & & & & & & & & & & & & & & & & \\ \hline
$3_4$ & $3$ &  &  &  & $1$ & $2$ & $3$ &  &  &  & $2$ &  &  &  & $3$ &  &  & & & & & & & & & & & & & & & & & & & & & & & & & & & & \\ \hline
$3_{4,2}$ &  & $3$ &  &  & $2$ & $1$ &  & $3$ &  &  &  &  &  &  &  & $3$ &  & & & & & & & & & & & & & $2$ & & & & & & & & & & & & & & & \\ \hline
$3_{4,4_5}$ &  &  & $3$ &  & $3$ &  & $1$ & $2$ &  &  &  & $2$ &  &  &  &  &  & & & & & & & & & & & & & & & & & & & & & & & & & & & & \\ \hline
$3_{4,4_5,2}$ &  &  &  & $3$ &  & $3$ & $2$ & $1$ &  &  &  &  &  &  &  &  &  & & & & & & & & & & & & & & $2$ & & & & & & & & & & & & & & \\ \hline
$3_{1,2}$ & $3$ &  &  &  &  &  &  &  & $1$ & $3$ & $3$ &  &  &  &  &  & $3$ & & & & & & $3$ & & & & & & & & & & & & & & & & & & & & & & \\ \hline
$3_{1,4_5,2}$ &  &  & $3$ &  &  &  &  &  & $3$ & $1$ &  & $3$ &  &  &  &  &  & & & & & & & $3$ & & & & & & & & & & & & & & & & & & & & & \\ \hline
$3_{1,4,2}$ &  &  &  &  & $2$ &  &  &  & $3$ &  & $1$ & $3$ &  &  &  &  &  & $3$ & & & & & & & & & & & & & & & & & & & & & & & & & & & \\ \hline
$3_{1,4,4_5,2}$ &  &  &  &  &  &  & $2$ &  &  & $3$ & $3$ & $1$ &  &  &  &  &  & & & & & & & & & & & & & & & & & & & & & & & & & & & & \\ \hline
$3_{6,4_5}$ & $3$ &  &  &  &  &  &  &  &  &  &  &  & $1$ & $2$ & $3$ &  & $3$ & & & & & & & & $3$ & & & & & & & & & & & & & & & & & & & & \\ \hline
$3_{6,4_5,2}$ &  & $3$ &  &  &  &  &  &  &  &  &  &  & $2$ & $1$ &  & $3$ &  & & & & & & & & & $3$ & & & & & & & & & & $2$ & & & & & & & & & \\ \hline
$3_{6,4,4_5}$ &  &  &  &  & $3$ &  &  &  &  &  &  &  & $3$ &  & $1$ & $2$ &  & $3$ & & & & & & & & & & & & & & & & & & & & & & & & & & & \\ \hline
$3_{6,4,4_5,2}$ &  &  &  &  &  & $3$ &  &  &  &  &  &  &  & $3$ & $2$ & $1$ &  & & & & & & & & & & & & & & & & & & & & $2$ & & & & & & & & \\ \hline
$3_{6,1,4_5,2}$ &  &  &  &  &  &  &  &  & $3$ &  &  &  & $3$ &  &  &  & $1$ & $3$ & & & & & & & & & $3$ & & & & & & & & & & & & & & & & & & \\ \hline
$3_{6,1,4,4_5,2}$ &  &  &  &  &  &  &  &  &  &  & $3$ &  &  &  & $3$ &  & $3$ & $1$ & & & & & & & & & & & & & & & & & & & & & & & & & & & \\ \hline
$3_{6_5,4}$ & $3$ & & & & & & & & & & & & & & & & & & $1$ & $2$ & $3$ & & $2$ & & $3$ & & & & & & & & & & & & & & & & & & & & \\ \hline
$3_{6_5,4,2}$ & & $3$ & & & & & & & & & & & & & & & & & $2$ & $1$ & & $3$ & & & & $3$ & & & & & & & & & & & & & & & & $2$ & & & \\ \hline
$3_{6_5,4,4_5}$ & & & $3$ & & & & & & & & & & & & & & & & $3$ & & $1$ & $2$ & & $2$ & & & & & & & & & & & & & & & & & & & & & \\ \hline
$3_{6_5,4,4_5,2}$ & & & & $3$ & & & & & & & & & & & & & & & & $3$ & $2$ & $1$ & & & & & & & & & & & & & & & & & & & $2$ & & & & \\ \hline
$3_{6_5,1,4,2}$ & & & & & & & & & $3$ & & & & & & & & & & $2$ & & & & $1$ & $3$ & & & $3$ & & & & & & & & & & & & & & & & & & \\ \hline
$3_{6_5,1,4,4_5,2}$ & & & & & & & & & & $3$ & & & & & & & & & & & $2$ & & $3$ & $1$ & & & & & & & & & & & & & & & & & & & & & \\ \hline
$3_{6_5,6,4,4_5}$ & & & & & & & & & & & & & $3$ & & & & & & $3$ & & & & & & $1$ & $2$ & $3$ & & & & & & & & & & & & & & & & & & \\ \hline
$3_{6_5,6,4,4_5,2}$ & & & & & & & & & & & & & & $3$ & & & & & & $3$ & & & & & $2$ & $1$ & & & & & & & & & & & & & & & & & & $2$ & \\ \hline
$3_{6_5,6,1,4,4_5,2}$ & & & & & & & & & & & & & & & & & $3$ & & & & & & $3$ & & $3$ & & $1$ & & & & & & & & & & & & & & & & & & \\ \hline
$3_{1_2}$ & & $2$ & & & & & & & & & & & & & & & & & & & & & & & & & & $1$ & $3$ & $3$ & & $3$ & & & & $3$ & & & & $3$ & & & & & \\ \hline
$3_{1_2,4_5}$ & & & & $2$ & & & & & & & & & & & & & & & & & & & & & & & & $3$ & $1$ & & $3$ & & $3$ & & & & & & & & $3$ & & & & \\ \hline
$3_{1_2,4}$ & & & & & & $2$ & & & & & & & & & & & & & & & & & & & & & & $3$ & & $1$ & $3$ & & & $2$ & & & $3$ & & & & & & & & \\ \hline
$3_{1_2,4,4_5}$ & & & & & & & & $2$ & & & & & & & & & & & & & & & & & & & & & $3$ & $3$ & $1$ & & & & $2$ & & & & & & & & & & \\ \hline
$3_{1_2,1,2}$ & & & & & & & & & & & & & & & & & & & & & & & & & & & & $3$ & & & & $1$ & $3$ & $3$ & & & & $3$ & & & & $3$ & & & \\ \hline
$3_{1_2,1,4_5,2}$ & & & & & & & & & & & & & & & & & & & & & & & & & & & & & $3$ & & & $3$ & $1$ & & $3$ & & & & & & & & $3$ & & \\ \hline
$3_{1_2,1,4,2}$ & & & & & & & & & & & & & & & & & & & & & & & & & & & & & & $2$ & & $3$ & & $1$ & $3$ & & & & $3$ & & & & & & \\ \hline
$3_{1_2,1,4,4_5,2}$ & & & & & & & & & & & & & & & & & & & & & & & & & & & & & & & $2$ & & $3$ & $3$ & $1$ & & & & & & & & & & \\ \hline
$3_{1_2,6,4_5}$ & & & & & & & & & & & & & & $2$ & & & & & & & & & & & & & & $3$ & & & & & & & & $1$ & $3$ & $3$ & & & & & & $3$ & \\ \hline
$3_{1_2,6,4,4_5}$ & & & & & & & & & & & & & & & & $2$ & & & & & & & & & & & & & & $3$ & & & & & & $3$ & $1$ & & $3$ & & & & & & \\ \hline
$3_{1_2,6,1,4_5,2}$ & & & & & & & & & & & & & & & & & & & & & & & & & & & & & & & & $3$ & & & & $3$ & & $1$ & $3$ & & & & & & $3$ \\ \hline
$3_{1_2,6,1,4,4_5,2}$ & & & & & & & & & & & & & & & & & & & & & & & & & & & & & & & & & & $3$ & & & $3$ & $3$ & $1$ & & & & & & \\ \hline
$3_{1_2,6_5,4}$ & & & & & & & & & & & & & & & & & & & & $2$ & & & & & & & & $3$ & & & & & & & & & & & & $1$ & $3$ & $2$ & & $3$ & \\ \hline
$3_{1_2,6_5,4,4_5}$ & & & & & & & & & & & & & & & & & & & & & & $2$ & & & & & & & $3$ & & & & & & & & & & & $3$ & $1$ & & $2$ & & \\ \hline
$3_{1_2,6_5,1,4,2}$ & & & & & & & & & & & & & & & & & & & & & & & & & & & & & & & & $3$ & & & & & & & & $2$ & & $1$ & $3$ & & $3$ \\ \hline
$3_{1_2,6_5,1,4,4_5,2}$ & & & & & & & & & & & & & & & & & & & & & & & & & & & & & & & & & $3$ & & & & & & & & $2$ & $3$ & $1$ & & \\ \hline
$3_{1_2,6_5,6,4,4_5}$ & & & & & & & & & & & & & & & & & & & & & & & & & & $2$ & & & & & & & & & & $3$ & & & & $3$ & & & & $1$ & $3$ \\ \hline
$3_{1_2,6_5,6,1,4,4_5,2}$ & & & & & & & & & & & & & & & & & & & & & & & & & & & & & & & & & & & & & & $3$ & & & & $3$ & & $3$ & $1$ \\ \hline
\end{tabular}
}

\vspace{20pt}

\begin{tabular}{|c||c|c|c|c|c|c|c|c|c|c|c|c|c|c|c|c|}
\hhline{|=#=|=|=|=|=|=|=|=|=|=|=|=|=|=|=|=|}
$7$ & $1$ & $2$ & $3$ &  & $3$ &  &  & & $2$ & & & & & & & \\ \hline
$7_1$ & $2$ & $1$ &  & $3$ &  & $3$ &  & & & & & & & & & \\ \hline
$7_{6}$ & $3$ &  & $1$ & $2$ &  &  & $3$ & & & & $2$ & & & & & \\ \hline
$7_{6,1}$ &  & $3$ & $2$ & $1$ &  &  &  & $3$ & & & & & & & & \\ \hline
$7_{6_5}$ & $3$ &  &  &  & $1$ & $2$ & $3$ & & & & & & $2$ & & & \\ \hline
$7_{6_5,1}$ &  & $3$ &  &  & $2$ & $1$ &  & $3$ & & & & & & & & \\ \hline
$7_{6_5,6}$ &  &  & $3$ &  & $3$ &  & $1$ & $2$ & & & & & & & $2$ & \\ \hline
$7_{6_5,6,1}$ &  &  &  & $3$ &  & $3$ & $2$ & $1$ & & & & & & & & \\ \hline
$7_{1_2}$ & $2$ & & & & & & & & $1$ & $2$ & $3$ & & $3$ & & & \\ \hline
$7_{1_2,1}$ & & & & & & & & & $2$ & $1$ & & $3$ & & $3$ & & \\ \hline
$7_{1_2,6}$ & & & $2$ & & & & & & $3$ & & $1$ & $2$ & & & $3$ & \\ \hline
$7_{1_2,6,1}$ & & & & & & & & & & $3$ & $2$ & $1$ & & & & $3$ \\ \hline
$7_{1_2,6_5}$ & & & & & $2$ & & & & $3$ & & & & $1$ & $2$ & $3$ & \\ \hline
$7_{1_2,6_5,1}$ & & & & & & & & & & $3$ & & & $2$ & $1$ & & $3$ \\ \hline
$7_{1_2,6_5,6}$ & & & & & & & $2$ & & & & $3$ & & $3$ & & $1$ & $2$ \\ \hline
$7_{1_2,6_5,6,1}$ & & & & & & & & & & & & $3$ & & $3$ & $2$ & $1$ \\ \hline
\end{tabular}
\caption{Type-$3$ and type-$7$ adjacency matrices of $P_8$.}\label{t8}
\end{table}

\begin{figure}
\includegraphics{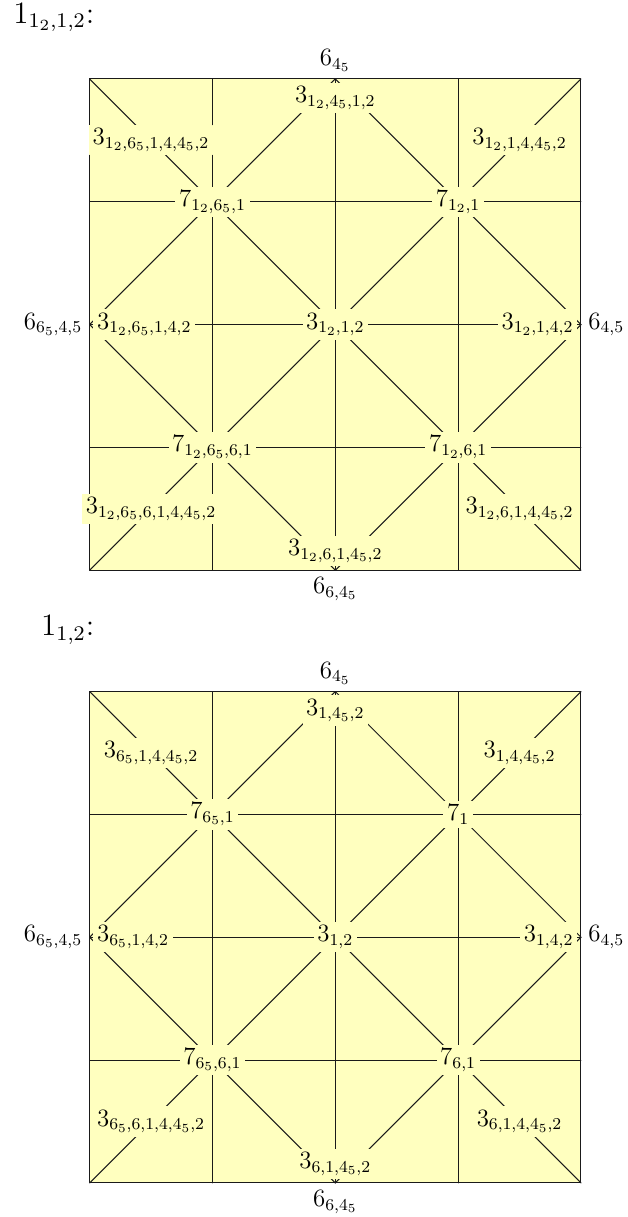}
    \caption{Some facets of $L_8$.}\label{f13}
\end{figure}

\begin{figure}
\includegraphics{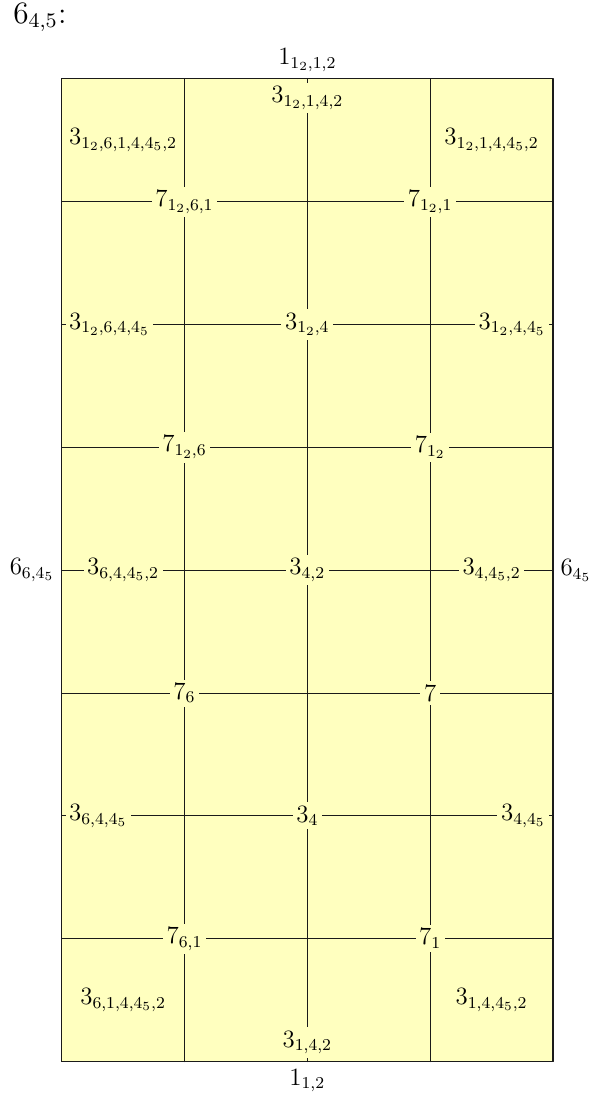}
    \caption{A facet of $L_8$.}\label{f13.1}
\end{figure}

\begin{figure}
\includegraphics{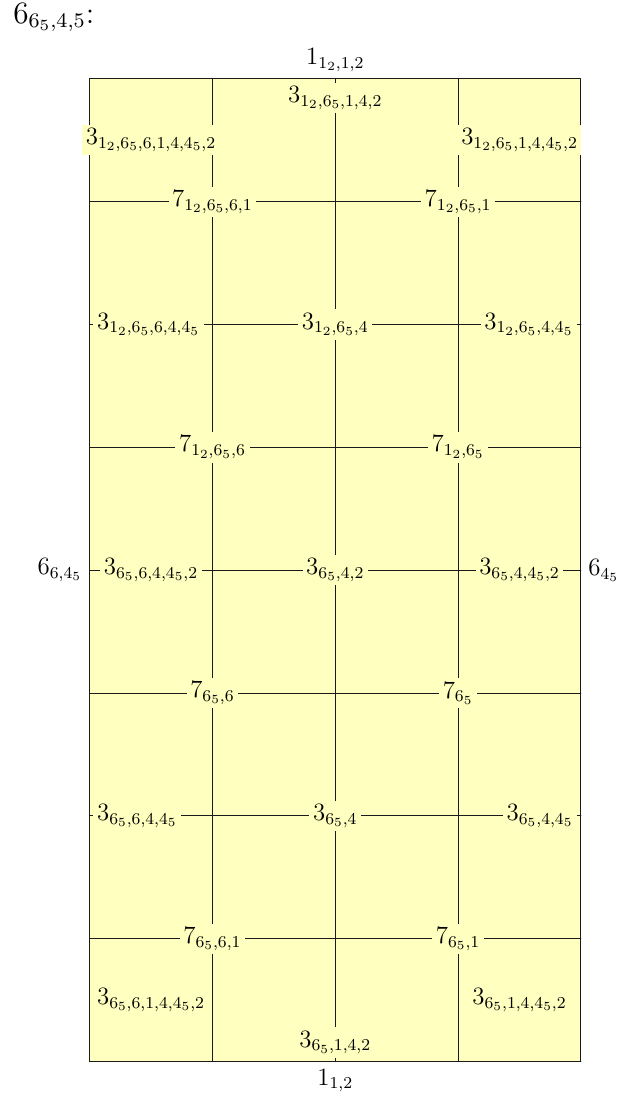}
    \caption{A facet of $L_8$.}\label{f13.2}
\end{figure}

\begin{figure}
\includegraphics{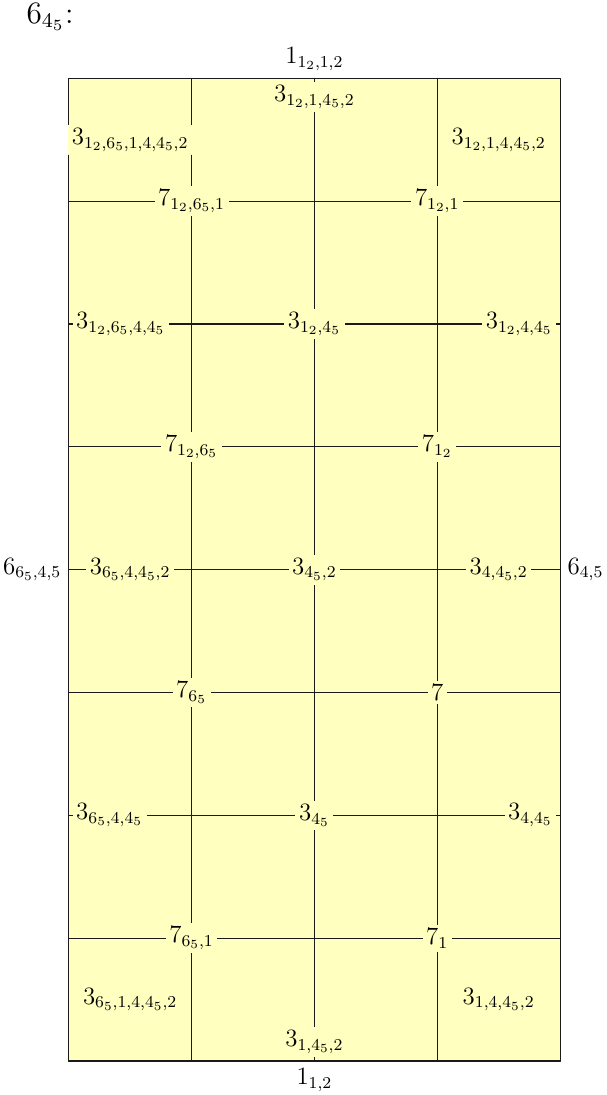}
    \caption{A facet of $L_8$.}\label{f13.3}
\end{figure}

\begin{figure}
\includegraphics{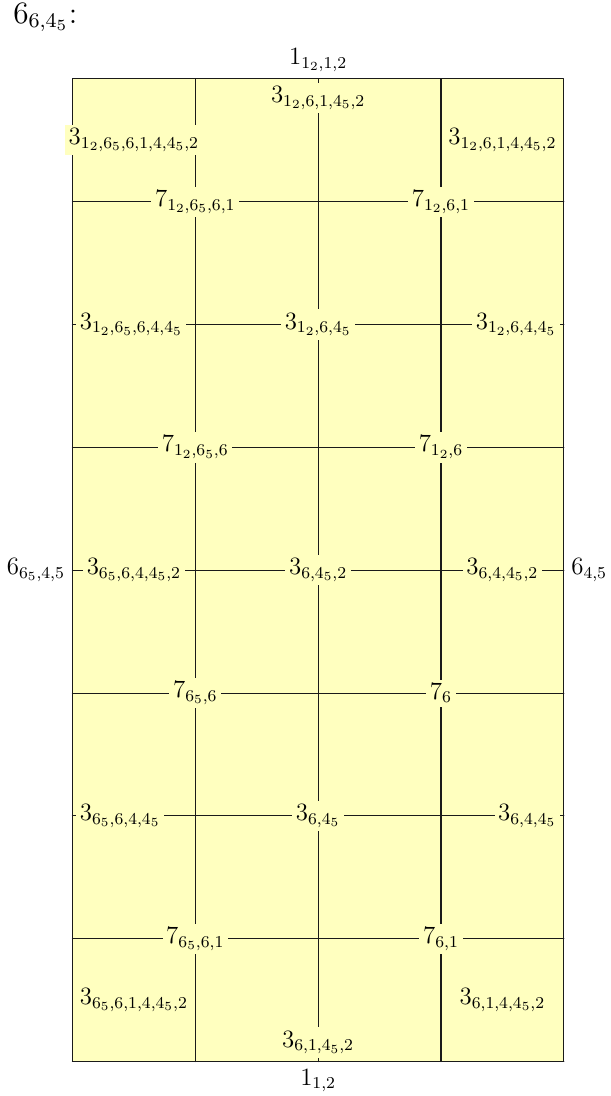}
    \caption{A facet of $L_8$.}\label{f13.4}
\end{figure}

\clearpage

\bibliographystyle{alpha}
\bibliography{bibliografia}

\begin{thebibliography}{McR09}

\bibitem[CD95]{CD}
R.~M. Charney and M.~W. Davis.
\newblock Strict hyperbolization.
\newblock {\em Topology}, 34(2):329--350, 1995.

\bibitem[Dav12]{D}
M.~W. Davis.
\newblock {\em The Geometry and Topology of Coxeter Groups.}
\newblock Princeton University Press, 2012.
\newblock London Mathematical Society Monographs, volume 32.

\bibitem[FKS21]{FKS}
L.~Ferrari, A.~Kolpakov, and L.~Slavich.
\newblock Cusps of hyperbolic 4-manifolds and rational homology spheres.
\newblock {\em Proc. Lond. Math. Soc.}, 6(3):636--648, 2021.

\bibitem[IH90]{IH}
H.-C. Im~Hof.
\newblock Napier cycles and hyperbolic {C}oxeter groups.
\newblock {\em Bull. Soc. Math. Belg. S\'{e}r. A}, 42(3):523--545, 1990.

\bibitem[KM13]{KM}
A.~Kolpakov and B.~Martelli.
\newblock Hyperbolic four-manifolds with one cusp.
\newblock {\em Geom. Funct. Anal.}, 23(6):1903--1933, 2013.

\bibitem[KS16]{KS}
A.~Kolpakov and L.~Slavich.
\newblock Hyperbolic 4-manifolds, colourings and mutations.
\newblock {\em Proc. Lond. Math. Soc.}, 113(2):163--184, 2016.

\bibitem[LR00]{LR}
D.~D. Long and A.~W. Reid.
\newblock On the geometric boundaries of hyperbolic 4-manifolds.
\newblock {\em Geom. Topol.}, 4(1):171--178, 2000.

\bibitem[LR02]{LR2}
D.~D. Long and A.~W. Reid.
\newblock All flat manifolds are cusps of hyperbolic orbifolds.
\newblock {\em Algebr. Geom. Topol.}, 2(1):285--296, 2002.

\bibitem[Mar18]{MR3888615}
B.~Martelli.
\newblock Hyperbolic four-manifolds.
\newblock In {\em Handbook of group actions. {V}ol. {III}}, volume~40 of {\em Adv. Lect. Math. (ALM)}, pages 37--58. Int. Press, Somerville, MA, 2018.

\bibitem[Mar23]{MA}
B.~Martelli.
\newblock {\em An introduction to Geometric Topology}.
\newblock CreateSpace Independent Publishing Platform, 2023.
\newblock Version 3.

\bibitem[McR04]{M}
D.~B. McReynolds.
\newblock Peripheral separability and cusps of arithmetic hyperbolic orbifolds.
\newblock {\em Algebr. Geom. Topol.}, 4(2):721--755, 2004.

\bibitem[McR09]{MR2525031}
D.~B. McReynolds.
\newblock Controlling manifold covers of orbifolds.
\newblock {\em Math. Res. Lett.}, 16(4):651--662, 2009.

\bibitem[Nie98]{N}
B.~E. Niemershiem.
\newblock All flat three-manifolds appear as cusps of hyperbolic four-manifolds.
\newblock {\em Topology and its Appl.}, 90:109--133, 1998.

\bibitem[Pro86]{P}
M.~N. Prokhorov.
\newblock Absence of discrete groups of reflections with a noncompact fundamental polyhedron of finite volume in a {L}obachevski\v{ı} space of high dimension.
\newblock {\em Izv. Akad. Nauk SSSR Ser. Mat.}, 50(2):413--424, 1986.

\bibitem[Sto13]{MR3070517}
M.~Stover.
\newblock On the number of ends of rank one locally symmetric spaces.
\newblock {\em Geom. Topol.}, 17(2):905--924, 2013.

\bibitem[Vin85]{V}
{\`E}.~B. Vinberg.
\newblock {Hyperbolic reflection groups}.
\newblock {\em Uspekhi Mat. Nauk}, 40(1(241)):29--66, 255, 1985.

\end{thebibliography}

\end{document}